\theoremstyle{plain}
\newtheorem{theorem}{Theorem}[section]
\newtheorem{lemma}[theorem]{Lemma}
\newtheorem{proposition}[theorem]{Proposition}
\newtheorem{corollary}[theorem]{Corollary}
\theoremstyle{definition}
\newtheorem{definition}[theorem]{Definition}
\theoremstyle{remark}
\newtheorem*{remark}{Remark}
\newtheorem*{notation}{Notation}
\def\imod#1{\allowbreak\mkern10mu({\operator@font mod}\,\,#1)}
\def\rca#1{\sf{RCA}_{#1}}
\def\rpea#1{\sf{RPEA}_{#1}}
\def\pea#1{\sf{PEA}_{#1}}
\def\pa#1{\sf{PA}_{#1}}
\def\rpa#1{\sf{RPA}_{#1}}
\def\rdf#1{\sf{RDf}_{#1}}
\def\ca#1{\sf{CA}_{#1}}
\def\df#1{\sf{Df}_{#1}}
\def\rl{\sf{RL}}
\def\restr{\mathop{\restriction}}
\def\At{\mathop{At}}
\def\dom{\mathop{dom}}
\def\im{\mathop{im}}
\def\set{\im}
\let\sec\S
\def\A{\mathcal{A}}
\def\B{\mathcal{B}}
\def\C{\mathcal{C}}
\def\G{\mathcal{G}}
\def\M{\mathcal{M}}
\def\N{\mathcal{N}}
\def\P{\mathcal{P}}
\def\S{\mathcal{S}}
\def\U{\mathcal{U}}
\def\c #1{\mathcal{#1}}
\def\vec #1#2{#1_1,\allowbreak\ldots,\allowbreak #1_{#2}}
\def\ds #1{$#1$-distin\-guish\-ing}
\def\osim{{\sim}}
\def\nb #1{$\bullet$\marginpar{\small #1}}
\def\nb #1{}
\title{Bare canonicity of representable cylindric and polyadic algebras\thanks{Keywords: Canonical extension, canonical variety, canonical axiomatisation,
algebras of relations, cylindric algebras, diagonal-free algebras, random graphs.
2010 MSC classification: Primary 03G15, secondary 03C05, 06B15, 06E15, 06E25.
}}
\author{Jannis Bulian$^1$ and Ian Hodkinson$^2$}
\date{ \small
${}^1$ Mathematical Institute, University of Oxford, 24--29 St Giles', Oxford OX1 3LB, UK
\\[6pt]
${}^2$ Department of Computing, Imperial College London, London SW7 2AZ, UK
\\
\url{http://www.doc.ic.ac.uk/~imh/}}
\begin{document}

\maketitle

\begin{abstract}
We show that for finite $n \geq 3$, every first-order
axiomatisation of the varieties
of representable $n$-dimensional cylindric algebras,
diagonal-free cylindric algebras, poly\-adic algebras, and polyadic equality algebras
contains an infinite number of non-canonical formulas.
We also show that the class of structures for each of these varieties is non-elementary.
The proofs employ algebras derived from random graphs.
\end{abstract}

\section{Introduction}
The notion of the \emph{canonical extension} of a boolean algebra with operators
(or `BAO') was introduced by J\'onsson and Tarski in a classical paper \cite{Jonsson51},
generalising a construction of Stone \cite{Stone36}.
It is an algebra whose domain is the power set of the set of ultrafilters
of the original BAO,
and its operations are induced from those of the BAO in a natural way.
Canonical extensions are nowadays a key tool in algebraic logic,
with a multitude of uses and generalisations.

A class of BAOs is said to be \emph{canonical} if it is closed under taking canonical extensions.
In this paper we are concerned with the 
classes of representable $n$-dimensional cylindric algebras,
diagonal-free cylindric algebras, polyadic algebras,
and polyadic equality algebras, for finite $n\geq3$.
These four classes are varieties.
They are non-fin\-itely axiomatisable,
and many further `negative' results on axiomatisations are known
(e.g., \cite{Andreka97,Ahmed11}).
However, the classes  are canonical.
Now \cite{Jonsson51} already established that positive equations
are preserved by canonical extensions,
and more generally, Sahlqvist equations are also preserved (see, e.g., \cite{Blackburn01}).
This may suggest that the four classes might be axiomatisable by positive or
Sahlqvist equations.

It turned out that the representable
cylindric algebras are not Sahlqvist axiomatisable  \cite[footnote 1]{V97:atomSahl}.
In this paper, we extend this result to a wider class of axioms and
to all four classes.
A  first-order sentence is said to be \emph{canonical} if the class of its BAO models is canonical. 
Although some syntactic classes of canonical sentences
(such as Sahlqvist equations) are known,
canonicity is a semantic property that cannot be easily defined 
syntactically.  For example, there is no algorithm to decide whether an equation is canonical
\cite[Theorem 9.6.1]{Kracht99}.
The goal of this paper is to show that there is no canonical axiomatisation of
any of the four classes listed above. In fact,
we will show that \emph{any first-order axiomatisation of any of them contains infinitely many
non-canonical sentences.}
We say that a canonical class of BAOs with this 
property is \emph{barely canonical.}
Although the class is canonical, its canonicity
emerges only `in the limit' and does not reside in any finite number of axioms
for it, however they are phrased.

There are a few related results in the literature.
The class of representable relation algebras,
proved to be canonical  by Monk 
(reported in \cite{McKe66}), was shown in \cite{Hodkinson05} to be barely canonical.
Our proof  in the current paper is similar but somewhat simpler: the use of finite combinatorics 
(finite Ramsey theorem, etc) in \cite{Hodkinson05} is replaced here by the use of  first-order compactness. Bare canonicity of the `McKinsey--Lemmon' modal logic
was shown in \cite{Goldblatt07}.

We sketch the rough outline of the proof.
Our aim is to convey the idea quickly, and the description will not be completely accurate in detail.
Our construction uses polyadic-type  algebras built from graphs.
They are polyadic expansions of  cylindric-type algebras constructed from graphs 
in~\cite{Hirsch09}, where it was shown (roughly) that
 such an algebra is representable if and only if its base graph has
infinite chromatic number.
(This was used in~\cite{Hirsch09} to prove that the class of structures
for the variety of representable $n$-dimensional cylindric algebras
(finite $n\geq3$) is non-elementary, a result generalised
to diagonal-free, polyadic, and polyadic equality algebras in Theorem~\ref{thm:srb nonelem} below.)
Here, we will cast this work in a wider setting by defining an elementary class $\sf K$ of 
three-sorted structures comprising a polyadic equality-type algebra $\A$, a graph $\G$, and a boolean algebra $\B$ of subsets of $\G$.
We will show that  representability of $\A$
is  equivalent to $\G$
having infinite chromatic number in the sense of $\B$.
Both these properties can be defined by first-order theories,
which therefore have the same models modulo the theory defining $\sf K$.
It follows by compactness that
 if the class of representable algebras had a first-order axiomatisation
using only canonical sentences, 
there would be a function $f:\omega\to\omega$ such that
whenever an algebra $\A$  has chromatic number at least $f(k)$
(in the sense of some three-sorted structure), its canonical extension has
chromatic number at least $k$.
We then borrow from \cite{Hodkinson05} an inverse system of finite (random)
graphs of chromatic number $m$ whose inverse limit has chromatic number $k$,
for any chosen $2\leq k<m<\omega$. Using some results 
of Goldblatt \cite{Goldblatt76} connecting canonical extensions with inverse limits,
this yields an algebra of chromatic number $m$ whose canonical extension has chromatic number $k$.
Since $k,m$ are arbitrary, no function $f$ as above can exist.
A slight extension of the argument, using a little more compactness, 
shows that
any first-order axiomatisation of the representable algebras  has 
infinitely many non-canonical sentences.

\paragraph{Layout of paper}
In Section~\ref{sec:algs of rels} we recall some basic notions of
algebras of relations, representability, duality and canonicity.
We define polyadic equality-type algebras over graphs in Section~\ref{S:alggraphs},
and  abstract generalisations of them in Section~\ref{sec:ags},
where we also ascertain some of their elementary properties.
This is continued in Section~\ref{S:netpatchsys}, where we study their ultrafilters. 
In Section~\ref{sec:nks} we introduce
approximations to representations by means of systems of ultrafilters called `ultrafilter networks',
and lower-dimensional approximations of them called `patch systems'.
This will allow us to prove in
Section~\ref{S:chromrep} that (roughly) an abstract algebra is representable
if and only if its associated graph has infinite chromatic number. Assuming
an axiomatisation with only finitely many non-canonical formulas, we use direct
and inverse systems in Section~\ref{S:dirinvsys} to build an algebra that satisfies an arbitrary number of axioms,
while its canonical extension satisfies only a bounded number, and thus obtain a
contradiction.
Section~\ref{sec:end} lists some open problems.

\paragraph{Notation}
We use the following notational conventions. 
We usually identify (notationally) a structure, algebra, or graph
with its domain.
For signatures $L\subseteq L'$ and an $L'$-structure $M$,
we write $M\restr L$ for the $L$-reduct of~$M$.

Throughout the paper, the dimension $n$ is a
fixed finite positive integer and $n$ is at least $3$. 
It will often be implicit that cylindric algebras etc.\ are $n$-dimen\-sional
and that $i,j,k,m$, etc., denote indices $<n$. We identify a non-negative
integer $m$ with the set $\{0,1, \dots, m-1\}$.
If $V$ is a set, we write $[V]^m$ for the set of subsets of size $m$ of V. We write
$\omega$ for the first infinite ordinal number. 
$\wp(S)$ denotes the power set of a set $S$.

For a function $f:X\to Y$ we write $\dom f$ for its domain, $\im f$ for its image,
and $f[X']$ for $\{f(x')\mid x'\in X'\}$ when $X'\subseteq X$.
We use similar notation for $m$-ary functions, for $m<\omega$
--- e.g., in Definition~\ref{def:uf str}.
For functions $f,g$, we write $f\circ g$ for their composition: $f\circ g(x)=f(g(x))$.
We omit  brackets in
function applications when we believe it improves readability.
By ${}^\alpha U$, where $\alpha$ is an ordinal,
we denote the set of functions from $\alpha$ to $U$, so an $\alpha$-ary
relation on $U$ is a subset of ${}^\alpha U$. To keep the syntax similar to the finite
case, we write $x_i$ for $x(i)$ if $x \in {}^\alpha U$ and $i < \alpha$. 

\section{Algebras of relations}\label{sec:algs of rels}
In this paper, we will consider four types of algebra:  cylindric-type algebras,
diagonal-free cylindric-type algebras, polyadic-type algebras, and polyadic equality-type algebras,
all of dimension $n$. They differ in their signatures and  notion of representation.
Here, we define them formally and recall some aspects of duality and canonicity for them.

\subsection{Signatures and algebras}

\begin{definition}\label{def:sigs}
We let
\begin{enumerate}

\item $L_{BA}=\{+,-,0,1\}$ denote the signature of boolean algebras,

\item $L_{\ca n}=L_{BA}\cup\{c_i,d_{ij}\mid i,j<n\}$
denote the signature
of $n$-dimensional  cylindric  algebras,

\item $L_{\df n}=L_{BA}\cup\{c_i\mid i<n\}$,
denote the signature
of $n$-dimensional diagonal-free cylindric  algebras,

\item $L_{\pa n}=L_{BA}\cup\{c_i,s_\sigma\mid i,j<n,\;\sigma:n\to n\}$
denote the signature
of $n$-dimensional polyadic  algebras,

\item $L_{\pea n}=L_{BA}\cup\{c_i,d_{ij},s_\sigma\mid i,j<n,\;\sigma:n\to n\}$
denote the signature
of $n$-dimensional polyadic equality algebras.

\end{enumerate}
Here,  the $c_i$ (`cylindrifications') and $s_\sigma$
(`substitutions')  are unary function symbols and the $d_{ij}$ (`diagonals') are constants.
By a \emph{cylindric-type algebra,} we mean simply an algebra of signature $L_{\ca n}$.
\emph{Diagonal-free cylindric-type algebras, polyadic-type algebras,} 
and \emph{polyadic equality-type algebras} are defined analogously for the other signatures.
\end{definition}

Our concern in this paper is with representable algebras of these four kinds,
but we briefly note that abstract
algebras have been defined as well:
namely, cylindric  algebras, diagonal-free cylindric  algebras,
polyadic  algebras, and polyadic equality algebras. 
They
are algebras of the above types that satisfy
in each case a finite set of equations that can be found in \cite{Henkin71,Henkin85}.
In particular, \emph{cylindric algebras} are defined in \cite[Definition 1.1.1]{Henkin71}.
We will not use the formal definition so we do not recall it here,
but the proofs of some later lemmas will be easier for readers familiar with
basic computations in cylindric algebras.  The material in
\cite[\sec1]{Henkin71} is easily enough for what we need.
Readers not so familiar can easily verify our claims directly
in the specific algebras we are working with.

\subsection{Representations}
Natural examples of each kind of algebra arise
from algebras of $n$-ary relations on a set.

\begin{definition}
\index{cylindric set algebra}
A \emph{polyadic equality set algebra}
is a polyadic equality-type algebra of the form
\[
(\wp(V), \emptyset, V, \cup, V\setminus-, C^U_i, D^U_{ij},S^U_\sigma\mid i,j < n),
\]
where  $U$ is a non-empty set, $V = {}^n U$, and
\begin{enumerate}
\item $D^U_{ij} = \{x \in V \mid x_i = x_j\}$ for $i,j < n$,
\item $C^U_i X = \{x \in V \mid \exists y \in X \;\forall j < n (j \neq i \rightarrow y_j = x_j)\}$ for $i < n$ and $X \subseteq V$,

\item $S^U_\sigma X=\{x\in V\mid x\circ\sigma\in X\}$, for $\sigma:n\to n$ and $X\subseteq V$.
\end{enumerate}

A \emph{polyadic set algebra} (\emph{cylindric set algebra})
is the reduct of a polyadic equality set algebra to the signature $L_{\pa n}$
(respectively, $L_{\ca n}$). 
Since $L_{\df n}$ has no operations connecting two different dimensions,
a \emph{diagonal-free cylindric set algebra}
is defined rather differently, as an $L_{\df n}$-algebra of the form
\[
(\wp(V), \emptyset, V, \cup, \setminus, C^V_i \mid i < n),
\]
where  $U_0,\ldots,U_{n-1}\neq\emptyset$,
 $V = \prod_{i<n}U_i$, and  $C^V_{i}X = \{x \in V \mid \exists y \in X \;\forall j \in n\setminus\{i\} ( y_j = x_j)\}$ for $i < n$ and $X\subseteq V$.  
\end{definition}

\begin{definition}

An $L_{\pea n}$-algebra is said to be \emph{representable} if it is isomorphic to a
subalgebra of a product of polyadic equality set algebras. The isomorphism is then called
a \emph{representation}.
The class of all representable polyadic equality algebras of dimension $n$ is
called $\rpea{n}$.

Exactly analogous definitions are made for $L_{\df n}$,
$L_{\ca n}$, and $L_{\pa n}$, 
using the appropriate set algebras in each case.
The classes of representable algebras for these are,
respectively,
 $\rdf n$, $\rca n$, and $\rpa n$.
\end{definition}

It is known that 
$\rpea n$, $\rpa n$, $\rca n$, and $\rdf n$ are varieties
(elementary classes defined by equations):
see, e.g., \cite[3.1.108, 5.1.43]{Henkin85}.
For $n\geq 3$, they are not finitely axiomatisable
\cite{Monk69,Johnson69},
and indeed we will see this later in Corollary~\ref{cor:nfa etc}.

\subsection{Atom structures}\label{ss:as}
We now recall a little duality theory, leading to canonicity, the 
topic of the paper.  
For more details, see, e.g., \cite{Jonsson51,Blackburn01} and \cite[\sec2.7]{Henkin71}.

\begin{definition}\label{def:as}
Let $L\supseteq L_{BA}$ be a functional signature (i.e., one with only function symbols and constants).
We write $L_+$ for the relational signature
consisting of an $(n+1)$-ary relation symbol $R_f$ for each $n$-ary 
function symbol $f\in L\setminus L_{BA}$.
By an ($L$)-\emph{atom structure,} we will simply mean an $L_+$-structure.
We will sometimes refer to the elements of an atom structure as \emph{atoms.}

Given an $L$-atom structure $\S=(S,R_f\mid R_f\in L_+)$,
we write $\S^+$ for its \emph{complex algebra:}
$\S^+=(\wp(S),f\mid f\in L)$,
where each $f\in L_{BA}$ is interpreted 
in the natural way as a boolean operation on $\wp(S)$,
and $f(\vec Xn)=\{s\in S\mid \S\models R_f(\vec xn,s)$ for some $x_1\in X_1,\ldots,x_n\in X_n\}$,
for each  $n$-ary $f\in L \setminus L_{BA}$ and $\vec Xn\subseteq S$.
We identify each $s\in S$ with the atom $\{s\}$ of $\S^+$.
\end{definition}
 
For the particular signature $L_{\pea n}$, 
we will be defining atom structures in which $R_{c_i}$ is an equivalence relation
and $R_{s_\sigma}^{-1}$  a function.  So we 
adopt a slightly different definition of atom structure
that is a little easier to specify in practice.

\begin{definition}\label{def:polyas}
A \emph{polyadic equality atom structure} is a structure
\[
\S=(S,D_{ij},\equiv_i,-^\sigma\mid i,j<n,\;\sigma:n\to n),
\]
where $D_{ij}\subseteq S$, $\equiv_i$ is an equivalence relation on $S$,
and $-^\sigma:S\to S$ is a function.
We regard $\S$ as a standard $L_{\pea n}$-atom structure
in the sense of Definition~\ref{def:as} by interpreting
$R_{d_{ij}}$ as $D_{ij}$, $R_{c_i}$ as $\equiv_i$,
and letting $R_{s_\sigma}(s,t)$ iff $t^\sigma=s$.
\end{definition}

\subsection{Canonicity}
One source of atom structures is from \emph{boolean algebra with operators (BAOs).}
These
originated in \cite{Jonsson51},
where they were called `normal BAOs', and they are now familiar: see, e.g., \cite{Blackburn01}. Let $L$ be a functional signature containing $L_{BA}$.

\begin{definition} \label{D:ufstr}
\index{ultrafilter structure}
An \emph{$L$-BAO} is an $L$-structure whose $L_{BA}$-reduct is a boolean algebra
and in which each $f\in L\setminus L_{BA}$ defines
a function that is normal
(its value is zero whenever any argument is zero) and additive in each argument.
\end{definition}
For example, if $\S$ is an $L$-atom structure
 then $\S^+$ is an $L$-BAO (note that constants are vacuously normal and additive).
Any algebra in $\rdf n$, $\rca n$, $\rpa n$, and $\rpea n$ is easily checked to be a BAO for its signature.

\begin{definition}\label{def:uf str}
Let $\B$ be an $L$-BAO. We
define the \emph{ultrafilter structure}
$\B_+$ to be the $L$-atom structure which has the set of ultrafilters (of the boolean
reduct) of  $\B$ as domain and,
for any $n$-ary function symbol $f \in L\setminus L_{BA}$ and $\mu_0, \dots, \mu_{n-1}, \nu \in \B_+$,
\[
\B_+ \models R_f(\mu_0, \dots, \mu_{n-1}, \nu)
  \iff f[\mu_0, \dots, \mu_{n-1}] \subseteq \nu.
\]
The \emph{canonical extension}  of $\B$, denoted by $\B^\sigma$,
is the $L$-BAO $(\B_+)^+$.
A~class $\sf K$ of $L$-BAOs is said to be \emph{canonical}
if $\B\in\sf K$ implies $\B^\sigma\in\sf K$.
A first-order $L$-sentence $\theta$ is said to be \emph{canonical}
if $\B\models\theta$ implies $\B^\sigma\models\theta$ for every $L$-BAO $\B$.
\end{definition}
Canonical extensions were introduced in \cite{Jonsson51}, where it was shown
that there is a canonical embedding of $\B$ into $\B^\sigma$
given by $b\mapsto\{\nu\in\B_+\mid b\in\nu\}$,
so justifying the use of `extension'.
Canonical extensions of cylindric algebras are 
studied in \cite[\sec2.7]{Henkin71}.
Canonical varieties in general have been intensely studied, 
for example by Goldblatt \cite{Gol95:canon},
and it is not hard to derive the following well known result.
The proof we give follows \cite{Gol95:canon}:
\cite[Theorem 4.6]{Gol95:canon} proves by a stronger version of the same method
that $\rca\alpha$ and ${\bf I}{\sf Crs}_\alpha$
are canonical varieties for every ordinal $\alpha$. Canonicity of $\rca n$ is proved 
in a different way in  \cite[p.459]{Henkin71}.

\begin{proposition}\label{prop:can}
$\rdf n$, $\rca n$, $\rpa n$, and $\rpea n$ are canonical varieties.
\end{proposition}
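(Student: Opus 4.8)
The plan is to show that each of the four varieties $\rdf n$, $\rca n$, $\rpa n$, and $\rpea n$ is closed under canonical extensions. Since these are known to be varieties (elementary classes axiomatised by equations), the standard approach is to verify that the defining equations survive the passage from a representable algebra $\B$ to its canonical extension $\B^\sigma=(\B_+)^+$. I would follow the route indicated in the text, namely the method of \cite{Gol95:canon}: rather than analysing individual equations syntactically, I would exploit structural features of representable algebras. The key idea is that each representable algebra embeds into a complex algebra of a suitable atom structure (a full set algebra, or a product of them), and such complex algebras are \emph{complete} and \emph{atomic}; one then argues that canonicity follows from closure of the class under a chain of well-behaved constructions.

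Concretely, first I would reduce to the case of a \emph{simple} generating set algebra: every algebra in one of the four classes is, by definition, a subalgebra of a product of set algebras, and a product of set algebras of the relevant type is itself (isomorphic to a subalgebra of) a single set algebra over the disjoint-union base. So it suffices to treat subalgebras $\B$ of a single polyadic equality set algebra (or its reducts). Second, I would use the fundamental fact from \cite{Jonsson51} that for any BAO $\B$, the canonical embedding $b\mapsto\{\nu\in\B_+\mid b\in\nu\}$ embeds $\B$ into $\B^\sigma$, and that $\B^\sigma=(\B_+)^+$ is a complete atomic BAO. The crux is then to realise $\B^\sigma$ itself as a representable algebra. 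Following the stronger statement of \cite[Theorem 4.6]{Gol95:canon}, the argument shows that if $\B$ is a subalgebra of a set algebra over base $U$, then $\B_+$ carries enough structure (the relations $\equiv_i$, $D_{ij}$, and the graphs of $s_\sigma$ as in Definition~\ref{def:polyas}) for $(\B_+)^+$ to be represented over an appropriate base built from $\B_+$; the representation of $\B$ lifts to a representation of $\B^\sigma$.

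The main obstacle, and the step I would dwell on, is verifying that the atom structure $\B_+$ of a representable algebra really does yield a \emph{representable} complex algebra $(\B_+)^+$, i.e.\ that the abstract relations $R_{c_i},R_{d_{ij}},R_{s_\sigma}$ on the ultrafilter structure behave like genuine cylindrification, diagonal, and substitution relations. For this I would check that on $\B_+$ the relation $\equiv_i$ (i.e.\ $R_{c_i}$) is an equivalence relation, the $D_{ij}$ interact correctly, and the substitution relations $R_{s_\sigma}$ are graphs of functions satisfying the polyadic identities — all of which transfer from the set-algebra operations via the definition of $\B_+$ and the additivity/normality of the operators. These are the properties that let one build an explicit representation of $(\B_+)^+$ and hence place it in the variety. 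The remaining steps are routine: the cylindric and diagonal-free cases follow by taking reducts (a reduct of a representable polyadic equality algebra is representable of the corresponding type, and reducts commute with canonical extension), and the polyadic case likewise. Since canonical extension is defined uniformly across signatures, establishing representability of $\B^\sigma$ in the richest case $\rpea n$ and then reducting handles all four classes at once.
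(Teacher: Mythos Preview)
Your proposal has several genuine gaps.

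First, the reduction ``a product of set algebras \ldots\ is itself (isomorphic to a subalgebra of) a single set algebra over the disjoint-union base'' fails for signatures with diagonals. In a product $\prod_k\wp({}^nU_k)$ the diagonal element is the tuple $(D_{ij}^{U_k})_k$, whereas the diagonal of $\wp({}^n\bigsqcup_kU_k)$ contains tuples that mix elements from different~$U_k$. There is no embedding of the product into the single set algebra respecting $d_{ij}$; this is precisely why representability is defined via products of set algebras rather than single ones.

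Second, the heart of your argument --- that verifying $\equiv_i$ is an equivalence relation on $\B_+$, that $D_{ij}$ ``interacts correctly'', and that $R_{s_\sigma}$ is functional --- suffices to make $(\B_+)^+$ representable --- is not justified. Those properties hold for the ultrafilter frame of \emph{any} polyadic equality algebra (representable or not), since they follow from the abstract equational axioms. They do not by themselves yield a base set $U$ and an embedding of $(\B_+)^+$ into $\wp({}^nU)$; that is the substantive step, and you have not indicated how to do it.

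Third, the reduct argument does not run in the direction you need. To deduce canonicity of $\rdf n$ from canonicity of $\rpea n$, you would need every $\B\in\rdf n$ to be the $L_{\df n}$-reduct of some $\B'\in\rpea n$. But diagonal-free set algebras are defined over rectangular bases $\prod_{i<n}U_i$, not cubes ${}^nU$, so there is no evident way to expand an arbitrary $\B\in\rdf n$ by diagonals and substitutions to land in $\rpea n$.

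The paper avoids all three problems by working on the atom-structure side rather than trying to represent $\B^\sigma$ directly. It defines, for each of the four signatures, the class $\sf K$ of concrete atom structures underlying the set algebras (e.g.\ $({}^nU,R_{c_i},R_{d_{ij}},R_{s_\sigma})$ for $\rpea n$), observes that each such $\sf K$ is closed under ultraproducts and has no proper inner substructures, and then invokes Goldblatt's criterion \cite[Theorem~4.5]{Gol95:canon}: if $\mathbb P_{\sf u}{\sf K}\subseteq\mathbb{HSU}_{\sf d}{\sf K}$ then ${\sf SCm}\,\mathbb{SU}_{\sf d}{\sf K}$ is a canonical variety. A short calculation identifies this with ${\sf SPCm}\,{\sf K}$, which is exactly the class of representable algebras. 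No explicit representation of $\B^\sigma$ is ever constructed, and the four signatures are handled in parallel rather than by reducting from the richest one.
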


\begin{proof}
Let 
$\sf K_{Df}$ be the class of all $(L_{\df n})_+$-structures 
of the form $(\prod_{i<n}U_i,R_{c_i}\mid i<n)$,
where $U_0,\ldots,U_{n-1}\neq\emptyset$ and $R_{c_i}((u_0,\ldots,u_{n-1}),(v_0,\ldots,v_{n-1}))$
iff $u_j=v_j$ for each $j\in n\setminus\{i\}$.
Let $\sf K_{PEA}$ be  the class of $(L_{\pea n})_+$-structures of the form
$(^nU,R_{c_i},R_{d_{ij}},R_{s_\sigma}\mid i,j<n,\;\sigma:n\to n)$,
where $U\neq\emptyset$, $R_{c_i}$ is defined in the same way as above,
$R_{d_{ij}}((u_0,\ldots,u_{n-1}))$ iff $u_i=u_j$,
and $R_{s_\sigma}((u_0,\ldots,u_{n-1}),(v_0,\ldots,\allowbreak v_{n-1}))$
iff $u_i=v_{\sigma(i)}$ for each $i<n$.
Let $\sf K_{PA},\sf K_{CA}$ be the class of reducts of structures in ${\sf K}_{PEA}$
to the signatures $(L_{\pa n})_+$ and $(L_{\ca n})_+$, respectively.

We now assume familiarity with the notation of~\cite{Gol95:canon}.
By Theorem 4.5 of \cite{Gol95:canon},
if $\sf K$ is a class of atom structures with
$\mathbb{P}{\sf uK}\subseteq\mathbb{HSU}\sf dK$,
then $\sf SCm\mathbb{SU} dK$ is a canonical variety.
By Theorem 2.2(2,5) of \cite{Gol95:canon},
$\sf PCm=Cm\mathbb{U}d$ and $\mathbb{SU}\sf d=\mathbb{U{\sf d}S}$,
so $\sf SCm\mathbb{SU} dK= SPCm\mathbb{S}K$.
Now let $\sf K\in\{{\sf K_{Df}},{\sf K_{PEA}},{\sf K_{PA},\sf K_{CA}}\}$.
Then $\sf K$ is closed under ultraproducts, and under inner substructures
(since no structure of the above forms has any proper inner substructures),
so $\mathbb{P}{\sf uK}\subseteq\sf K=\mathbb{S}K$.
Consequently, $\sf SPCmK$ --- the closure of
$\{\S^+\mid\S\in\sf K\}$ under subalgebras of products --- is a canonical variety.
But it follows from the definitions that 
${\sf SPCmK_{PEA}}=\rpea n$,
and similar results hold for the other three classes.
\end{proof}

Notwithstanding this proposition, we will show that any first-order
axiomatisation of any of these four varieties requires infinitely many non-canonical sentences.

\section{Algebras from graphs}
\label{S:alggraphs}

Here we will describe how to obtain polyadic equality type algebras from graphs.
In this paper, graphs are undirected and loop-free.
Recall that a set of nodes of a graph is \emph{independent} if 
there is no edge between any two nodes in the set.

\subsection{Atom structures from graphs}
The first
step is given by the following definitions (adapted from \cite[Definition 3.5]{Hirsch09}),
which construct a
polyadic equality atom structure from a graph.

\begin{notation}
We let $Eq(n)$ denote the set of equivalence relations on $n.$
If ${\sim}\in Eq(n)$ and $i < n$, we will write ${\sim_i}$
for the restriction of $\sim$ to $n \setminus \{i\}$.
\end{notation}

\begin{definition}
\index{atom structure from a graph}
Let $\Gamma=(V,E)$ be a graph.
We let 
$\Gamma\times n$ denote the graph 
\[
(V \times n,\{((x,i),(y,j))\in V \times n\mid E(x,y)\vee i\neq j\})
\]
consisting of $n$ copies of $\Gamma$ with all possible additional edges between copies.
\end{definition}

\begin{definition}
Fix a graph $\Gamma$.
Let $S(\Gamma)$ be the
set of all pairs $(K,{\sim})$, where $K : n \to \Gamma \times n$ is
  a partial map and ${\sim}$ an equivalence relation on $n$ that satisfies the
  following:
  \begin{enumerate}
    \item If $|n/{\sim}| = n$, then $\dom(K) = n$ and $\im(K)$ is not independent.
    \item If $|n/{\sim}| = n - 1$, so that there is a unique $\sim$-class $\{i,j\}$ of size 2
      with $i < j < n$, say, then $\dom(K) = \{i,j\}$ and $K(i) = K(j)$.
    \item Otherwise, i.e. if $|n/{\sim}| < n - 1$, $K$ is nowhere defined.
  \end{enumerate}
  \end{definition}
For $(K,\osim),(K',\osim')\in S(\Gamma)$ and $i,j<n$, we will write
$K(i)=K'(j)$ if  either $K(i)$ and $K'(j)$ are both undefined, or
they are both defined and are equal.
According to this, if
$i\sim j$ then $K(i)=K(j)$.

It may be helpful to think of $(K,\osim)\in S(\Gamma)$ as
`really' being $(\widehat{K},\osim)$,
where $\widehat{K}:[n/\osim]^{n-1}\to\Gamma\times n$ is a total map. For notational convenience, we write
$K(j)$ for  $\widehat{K}(\{i_1/\osim,\allowbreak\ldots,\allowbreak i_{n-1}/\osim\})$,
where $\{i_1,\ldots,i_{n-1},j\}=n$ and the right-hand side is defined.

\begin{definition}
Let $i < n$. 
A relation ${\sim}\in Eq(n)$ is said to be \emph{\ds i}
 if $\neg(j \sim k)$ for all distinct
$j,k \in n \setminus \{i\}$.
A pair $(K,\osim)\in S(\Gamma)$ is said to be \emph{\ds i} if $\osim$ is \ds i.
\end{definition}
\begin{remark}
If $(K, {\sim}) \in S(\Gamma)$, then $K$ is defined on $i < n$ if and
only if ${\sim}$ is $i$-distinguishing. 
\end{remark}

 \begin{definition} 
Let $\Gamma$ be a graph.
The polyadic equality atom structure
\[
\At(\Gamma) = (S(\Gamma), D_{ij}, \equiv_i,-^\sigma\mid i,j < n,\;\sigma:n\to n)
\]
is defined as follows:
\begin{enumerate}  
  \item $D_{ij} = \{(K, {\sim}) \in S(\Gamma) \mid i \sim j\}\subseteq S(\Gamma)$, for $i,j < n$.
  
\item $\equiv_i$ is the equivalence relation on $S(\Gamma)$
given by: $(K, {\sim}) \equiv_i (K', {\sim'})$ if and only if $K(i) = K'(i)$ and
    ${\sim_i} = {\sim_i'}$ for $i < n$.
    
    \item For each $\sigma:n\to n$, the map $-^\sigma:S(\Gamma)\to S(\Gamma)$ is given by: $(K,\sim)^\sigma=(K^\sigma,\sim^\sigma)$,
    where \begin{itemize}
\item $\osim^\sigma\in Eq(n)$ is defined by $i\sim^\sigma j$ iff $\sigma(i)\sim \sigma(j)$
    (for $i,j<n$),
    
    \item 
    $K^\sigma(i)$ (for $i<n$) is defined iff $\sim^\sigma$ is \ds i,
    and in that case, $K^\sigma(i)=K(j)$, where $j<n$ is the
     unique element satisfying $j\notin \sigma[n\setminus\{i\}]$.

\end{itemize}
We leave it to the reader to check that $(K^\sigma,\osim^\sigma)$ is well
defined and in $S(\Gamma)$, and that $K^\sigma$
is determined by $K$ and $\sigma$
even though we cannot in general recover $\sim^\sigma$ from them.
Note that if $\sigma$ is one-one then $K^\sigma=K\circ\sigma$.
\end{enumerate}
\end{definition}

\begin{definition}
We write $\A(\Gamma)$ for the 
$n$-dimensional polyadic equality type algebra  $At(\Gamma)^+$.
Explicitly,
\[
\A(\Gamma)=(\wp(S(\Gamma)),\cup,\setminus,\emptyset,S(\Gamma),d_{ij},c_i,s_\sigma
\mid i,j<n,\,\sigma:n\to n),
\]
where $d_{ij}=D_{ij}$  as above, and for $X\subseteq S(\Gamma)$,
\begin{enumerate}
\item $c_iX=\{(K,\osim)\in S(\Gamma)\mid \exists(K',\osim')\in X((K',\osim')\equiv_i(K,\osim))\}$,

\item $s_\sigma X=\{(K,\osim)\in S(\Gamma)\mid(K,\osim)^\sigma\in X\}$.
\end{enumerate}

\end{definition}
$\A(\Gamma)$ is the expansion to the signature of polyadic equality algebras
of a cylindric-type algebra, also written $\A(\Gamma)$, that was defined in \cite{Hirsch09}.
So some results proved for it also apply to the $\A(\Gamma)$ defined above.
Here is one (another is in Proposition~\ref{P:discrim} below):

 \begin{proposition}\label{T:algisca}
Let $\Gamma$ be a graph.
Then the cylindric reduct of $\A(\Gamma)$ is an $n$-dimen\-sional cylindric algebra.
\end{proposition}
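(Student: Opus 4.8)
The plan is to verify directly that the cylindric reduct of $\A(\Gamma)=At(\Gamma)^+$ satisfies the defining equations of cylindric algebras as listed in \cite[Definition 1.1.1]{Henkin71}. Since $\A(\Gamma)$ is a complex algebra of an atom structure, it is automatically a BAO: its boolean reduct is a genuine powerset algebra, and each $c_i$ and each $d_{ij}$ is normal and additive (the $d_{ij}$ vacuously so, being constants). So the boolean axioms and the normality/additivity conditions hold for free, and the real work is checking the handful of cylindric-specific equations. The standard strategy for complex algebras is to reduce each such equation to a first-order property of the underlying atom structure; concretely, because $\equiv_i$ is defined to be an \emph{equivalence relation} on $S(\Gamma)$, the operator $c_i$ is the ``possibility'' operator for an equivalence relation, and such operators automatically validate the modal $\sf S5$-type axioms that correspond to $c_i$ being a closure operator.

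First I would dispatch the axioms governing the $c_i$ alone: $c_i0=0$ (normality), $x\le c_ix$ (reflexivity of $\equiv_i$), $c_i(x\cdot c_iy)=c_ix\cdot c_iy$ (which follows from $\equiv_i$ being transitive and symmetric), and $c_ic_jx=c_jc_ix$ (commutativity of cylindrifications). The commutativity of $c_i$ and $c_j$ reduces to a confluence/commutativity property of the relations $\equiv_i$ and $\equiv_j$ on $S(\Gamma)$, which I would check from the explicit definition $(K,\osim)\equiv_i(K',\osim')$ iff $K(i)=K'(i)$ and $\osim_i=\osim_i'$: the conditions for $\equiv_i$ and $\equiv_j$ involve essentially independent coordinates, so the relevant diagram commutes. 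Next I would handle the diagonal axioms: $d_{ii}=1$ (since $i\sim i$ always, so $D_{ii}=S(\Gamma)$), the symmetry-type equation via $D_{ij}=D_{ji}$ (as $i\sim j$ iff $j\sim i$), the axiom $d_{jk}=c_i(d_{ji}\cdot d_{ik})$ for $i\notin\{j,k\}$, and the crucial interaction axiom $c_i(d_{ij}\cdot x)\cdot c_i(d_{ij}\cdot{-x})=0$ for $i\ne j$. Each of these I would translate into a statement about how the sets $D_{ij}$ sit inside $S(\Gamma)$ relative to the $\equiv_i$-classes.

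The step I expect to be the main obstacle is the last equation, $c_i(d_{ij}\cdot x)\cdot c_i(d_{ij}\cdot{-x})=0$ (for $i\ne j$), which expresses that within a single $\equiv_i$-class there is at most one atom lying in $D_{ij}$. Unwinding the definitions, this demands that if $(K,\osim)$ and $(K',\osim')$ both lie in $D_{ij}$ (so $i\sim j$ and $i\sim' j$) and are $\equiv_i$-related (so $K(i)=K'(i)$ and $\osim_i=\osim_i'$), then in fact $(K,\osim)=(K',\osim')$. Here I would use that $\osim$ and $\osim'$ agree off $i$ and each additionally relates $i$ to $j$, which forces $\osim=\osim'$; and then that $K(i)=K'(i)$ together with the structural constraints in the definition of $S(\Gamma)$ (cases keyed to $|n/\osim|$) pins down $K=K'$ on its whole domain. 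Once this uniqueness-within-a-class fact is established, the equation follows. Since $\A(\Gamma)$ is the polyadic-equality expansion of the cylindric-type algebra already constructed in \cite{Hirsch09}, I would also note that this proposition is essentially inherited from that source, so in practice one may either cite the corresponding verification there or reproduce these routine checks; the only genuine content is the case analysis for the $d_{ij}$-axioms just described.
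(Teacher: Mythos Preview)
Your plan is sound, but the paper's own ``proof'' is simply a one-line citation to \cite[Claim~3.4 and displayed line~(4)]{Kurucz10}; no verification is carried out in the paper itself. So your approach is genuinely different in that you propose to \emph{do} the check rather than defer it. Your closing remark that one may ``cite the corresponding verification'' is in fact exactly what the paper does (though it cites \cite{Kurucz10} rather than \cite{Hirsch09}).

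On the direct route itself: your identification of the last axiom $c_i(d_{ij}\cdot x)\cdot c_i(d_{ij}\cdot{-x})=0$ as the one requiring the most care is reasonable, and your argument for it is correct. But you underestimate two of the earlier axioms. First, commutativity $c_ic_j=c_jc_i$ reduces to confluence of $\equiv_i$ and $\equiv_j$, and ``essentially independent coordinates'' is too glib: given $(K,\osim)\equiv_i(K',\osim')\equiv_j(K'',\osim'')$, you must \emph{construct} an atom $(K^*,\osim^*)\in S(\Gamma)$ with $\osim^*_j=\osim_j$, $\osim^*_i=\osim''_i$, $K^*(j)=K(j)$, $K^*(i)=K''(i)$. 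Building $\osim^*$ already needs an argument (the two restriction constraints say nothing about whether $i\sim^* j$, and you must choose this compatibly with transitivity), and when $|n/\osim^*|=n$ you must ensure $\im K^*$ is not independent, which uses the structure of $\Gamma\times n$. Second, the axiom $d_{jk}=c_i(d_{ji}\cdot d_{ik})$ for $i\notin\{j,k\}$ likewise requires exhibiting, for any atom in $D_{jk}$, an $\equiv_i$-related atom in $D_{ji}\cap D_{ik}$, which is another small construction inside $S(\Gamma)$. None of this is hard, but the case analysis on $|n/\osim|$ that you reserve for the final axiom is actually needed for these as well.
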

\begin{proof}
This is proved in \cite[Claim 3.4 and displayed line (4)]{Kurucz10}.
\end{proof}

\section{Algebra-graph systems}\label{sec:ags}
Proposition~\ref{T:algisca} establishes a relation between graphs and cylindric algebras. However,
we need to study this relationship in a more abstract setting.

\subsection{Definitions}
\begin{definition}
\index{$L_{AGS}$}
We denote by $L_{AGS}$ the signature with three sorts $(\A, \G, \B)$ and the
following symbols:
\begin{enumerate}
\item $\A$-sorted copies of the function symbols $0, 1, +, -, d_{ij}, c_i,s_\sigma$ 
of $L_{\pea n}$ for each $i,j < n$ and $\sigma:n\to n$ (with the obvious
  arities that make $\A$ into a polyadic equality-type algebra);
  
\item $\B$-sorted copies of the function symbols $0,1, +, -$ of $L_{BA}$;

\item a binary (graph edge) relation symbol $E$ on $\G$;
\item a binary relation symbol $H$ on $\G$;
\item a binary relation symbol $\in$ between the elements of $\G$ and $\B$;
\item a unary function symbol $R_i : \A \to \B$ for each $i < n$;
\item a unary function symbol $S_i : \B \to \A$ for each $i < n$.
\end{enumerate}
\end{definition}

We need to pick out certain elements, so that all the elements beneath
are $i$-distin\-guish\-ing and thus have $K(i)$ defined on them.

\begin{definition} \label{D:Fi}
\index{$F_i$}
Let $\A$ be a cylindric-type or polyadic equality-type algebra. For $i < n$, define
\[
F_i = \prod \{-d_{jk}\mid j<k<n, \; j,k \neq i\}.
\]
We generally take $F_i$ to be an element of the algebra under consideration (here, $\A$), though sometimes we regard
it as an $L_{AGS}$-term.
\end{definition}
\begin{remark}
Clearly, for an algebra from a graph $\A(\Gamma)$, $F_i$ is just the
sum of all the $i$-distinguishing atoms.
For $(K,\osim)\in S(\Gamma)$, $K(i)$ is defined iff $(K,\osim)\in F_i$.
\end{remark}

\begin{definition} \label{D:MGamma}
For a graph $\Gamma$, let $M(\Gamma)$ be the $3$-sorted $L_{AGS}$ structure
\[
(\A(\Gamma),\; \Gamma \times n,\; \wp(\Gamma \times n))
\]
with operations defined as follows:
\begin{itemize}
\item The $A$-sorted and $\B$-sorted symbols are interpreted on $\A(\Gamma)$, $\wp(\Gamma\times n)$
in the natural way.

\item $E$ is interpreted as the edge relation on $\Gamma \times n$.

\item We have $H(x,y)$ if and only if there is $\ell < n$ such that
$x,y \in \Gamma \times \{\ell\}$. 

\item The relation $\in$ denotes membership of
elements of $\Gamma \times n$ in the sets that are elements of $\wp(\Gamma \times n)$.

\item Finally, we have
\begin{align*}
R_i(a)&=\{K(i) \mid (K,{\sim}) \in F_i \cdot a\} &\mbox{for }a\in\A(\Gamma),\\
S_i(B)&= \{(K, {\sim}) \in F_i \mid K(i) \in B\}&\mbox{for }B\in\wp(\Gamma\times n).
\end{align*}
\end{itemize}
\end{definition}

We now define a theory that helps us talk about the subclass of all the
$L_{AGS}$-structures similar to the ones derived from graphs.

\begin{definition} \label{D:ags}
\index{algebra-graph system}
A (first-order) $L_{AGS}$-formula is said to be  \emph{$\A$-universal}
if it is of the form
\[
(\forall\vec xm:\cal A)\;\varphi,
\]
where $\varphi$ is an $L_{AGS}$-formula with
 no quantifiers over the $\c A$-sort.
 
We define $\U$ to be the set of $\A$-universal sentences that are true in all
$L_{AGS}$-structures $M(\Gamma)$ for graphs $\Gamma$.
An  $L_{AGS}$-structure $M$ that is a model of $\U$ is called an
\emph{algebra-graph system}.
\end{definition}
This definition ensures that a good number of first-order statements that hold for
algebras from graphs, also hold in any algebra-graph system. It will allow us
to prove many first-order statements for algebra-graph systems, by just showing they
are $\A$-universal and
hold in  $M(\Gamma)$ for every graph $\Gamma$. We will refer to this approach 
as the \emph{generalisation technique}.

\subsection{Basic properties of algebra-graph systems}

\begin{lemma}\label{lem:A CA}
In any algebra-graph system $M=(\A, \G, \B)$, the cylindric reduct of
$\A$ is a cylindric algebra, and $\B$ is a boolean algebra
isomorphic to a subalgebra of $\wp(\G)$. 
\end{lemma}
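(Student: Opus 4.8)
The plan is to prove the two assertions about $M=(\A,\G,\B)$ separately, in both cases using the \emph{generalisation technique} of Definition~\ref{D:ags}. For the first assertion, I would exploit the fact that the cylindric algebra axioms are a finite set of equations (see \cite[Definition 1.1.1]{Henkin71}). Each such equation, say $\tau_1=\tau_2$ where $\tau_1,\tau_2$ are $L_{\ca n}$-terms, can be rewritten as the $L_{AGS}$-sentence $(\forall\vec xm:\A)\,(\tau_1=\tau_2)$, which is manifestly $\A$-universal since it contains no quantifiers at all over the $\A$-sort beyond the leading universal block. By Proposition~\ref{T:algisca}, the cylindric reduct of $\A(\Gamma)$ is a cylindric algebra for every graph $\Gamma$, so each of these finitely many sentences holds in $M(\Gamma)$ for all $\Gamma$; hence each lies in $\U$ and is satisfied by any algebra-graph system $M$. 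Therefore the cylindric reduct of $\A$ satisfies every cylindric algebra axiom, i.e.\ it is a cylindric algebra.

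For the second assertion I must show $\B$ is a boolean algebra and that it embeds into $\wp(\G)$. Boolean-ness of $\B$ follows exactly as above: the finitely many boolean-algebra axioms, written as $\B$-sorted equations under a leading $(\forall\vec xm:\A)$ block (which is vacuous on $\A$ but still renders the sentence $\A$-universal), hold in $M(\Gamma)$ because the $\B$-component there is the genuine power-set algebra $\wp(\Gamma\times n)$, hence they lie in $\U$. The embedding into $\wp(\G)$ is the more delicate point. The natural candidate is the map $\beta\mapsto\{g\in\G\mid g\in\beta\}$, sending each element of the $\B$-sort to its `extension' under the membership relation $\in$ of $L_{AGS}$. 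I would show this map is an injective boolean homomorphism by verifying three properties: that $\in$ respects the boolean operations (e.g.\ $g\in\beta_1+\beta_2 \leftrightarrow g\in\beta_1\vee g\in\beta_2$, and similarly for $-$, $0$, $1$), and that $\in$ is \emph{extensional}, meaning distinct elements of $\B$ have distinct extensions ($\forall\beta_1,\beta_2:\B\,(\forall g:\G\,(g\in\beta_1\leftrightarrow g\in\beta_2)\rightarrow\beta_1=\beta_2)$).

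Each of these required properties is a first-order $L_{AGS}$-sentence with \emph{no} quantifier over the $\A$-sort (the quantifiers range only over $\B$ and $\G$), so after prefixing a vacuous $(\forall\,:\A)$ block each is $\A$-universal; and each is plainly true in every $M(\Gamma)$, where $\in$ is literal set membership in $\wp(\Gamma\times n)$. Hence all of them lie in $\U$ and hold in $M$. From the homomorphism and extensionality properties it follows that $\beta\mapsto\{g\mid g\in\beta\}$ is an injective boolean homomorphism from $\B$ into $\wp(\G)$, exhibiting $\B$ as (isomorphic to) a subalgebra of $\wp(\G)$.

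The step I expect to be the main obstacle is confirming extensionality of $\in$, and more generally checking that the relevant properties really are expressible $\A$-universally and really do hold uniformly across all $M(\Gamma)$. The homomorphism clauses are routine once one writes them down, but extensionality is a genuine semantic constraint: it is the assertion that the $\B$-sort behaves like a \emph{concrete} field of subsets of $\G$ rather than an abstract boolean algebra with an arbitrary membership relation. Fortunately this too is captured by the appropriate $\A$-universal sentence in $\U$, since in each $M(\Gamma)$ the $\B$-sort is exactly $\wp(\Gamma\times n)$ and $\in$ is true membership, which is patently extensional. So the whole lemma reduces, via the generalisation technique, to checking that finitely many evidently-true $\A$-universal sentences hold in every $M(\Gamma)$ — the only care needed is in phrasing each property with its quantifiers confined to the $\G$- and $\B$-sorts so that $\A$-universality is preserved.
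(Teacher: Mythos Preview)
Your proposal is correct and follows essentially the same approach as the paper: use the generalisation technique to transfer the cylindric and boolean axioms (as $\A$-universal equations) from $M(\Gamma)$ to $M$, then verify that the map $B\mapsto\{p\in\G\mid M\models p\in B\}$ is an injective boolean homomorphism by checking extensionality and the homomorphism clauses for $+$, $-$, $0$, $1$ --- each expressible as an $\A$-universal sentence true in every $M(\Gamma)$. The paper simply lists the four needed sentences without your surrounding discussion, but the argument is the same.
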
 

\begin{proof}
The first statement follows by the generalisation technique, as we know
from Proposition~\ref{T:algisca} that an arbitrary algebra from a graph will
satisfy all the axioms for cylindric algebras.
These axioms are equations and can be recast in the obvious way 
as $\A$-universal $L_{AGS}$-sentences.
A similar argument shows that $\B$ is a boolean algebra.
Since the $\A$-universal sentences 
\begin{align*}
\forall B,B':\B&(\forall p:\G(p\in B\leftrightarrow p\in B')\to B=B'),
\\
\forall p:\G&(p\in 1\wedge\neg(p\in 0)),
\\
\forall B,B':\B\;\forall p:\G&(p\in B+B'\leftrightarrow p\in B\vee p\in B'),
\\
\forall B:\B\;\forall p:\G&(p\in -B\leftrightarrow\neg(p\in B))
\end{align*}
are in $\U$ and so are true in $M$,
the function $B\mapsto\{p\in\G\mid M\models p\in B\}$ is a boolean embedding
from $\B$ into $\wp(\G)$.
\end{proof}
So in any algebra-graph system $(\A,\G,\B)$,
Lemma~\ref{lem:A CA} allows us to regard $\B$ as a boolean algebra of subsets of $\G$,
and the $L_{AGS}$-relation symbol `$\in$' as denoting genuine set membership.

Recall that $F_i= \prod_{{j<k<n},\,{ j,k \neq i}} -d_{jk}$ from Definition~\ref{D:Fi}. 

\begin{lemma} \label{L:FiFj}
Let $M = (\A,\G,\B)$ be an algebra-graph system and $i,j < n$.
Then $F_i \cdot d_{ij} \leq F_j$ holds in $\A$.
\end{lemma}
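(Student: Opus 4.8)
The plan is to prove the inequality $F_i \cdot d_{ij} \leq F_j$ by the generalisation technique introduced in Definition~\ref{D:ags}. This inequality is an equation in the polyadic equality-type algebra $\A$, so it can be recast as an $\A$-universal $L_{AGS}$-sentence (a universally quantified statement over the $\A$-sort with no quantifiers over that sort, since equations are quantifier-free once the variables are fixed). Thus it suffices to verify that the inequality holds in $\A(\Gamma)$ for every graph $\Gamma$: then the sentence lies in $\U$, and hence holds in every algebra-graph system.

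So I would reduce to checking $F_i \cdot d_{ij} \leq F_j$ in the concrete algebra $\A(\Gamma)=At(\Gamma)^+$, where these elements are subsets of $S(\Gamma)$. First I would unwind the definitions: by the Remark following Definition~\ref{D:Fi}, $(K,\osim) \in F_i$ if and only if $\osim$ is $i$-distinguishing, i.e.\ $\neg(j \sim k)$ for all distinct $j,k \in n \setminus \{i\}$; and $(K,\osim) \in d_{ij}$ if and only if $i \sim j$. The goal is then the purely combinatorial statement about equivalence relations on $n$: if $\osim$ is $i$-distinguishing and $i \sim j$, then $\osim$ is $j$-distinguishing. I would verify this directly. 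Suppose $\osim$ is $i$-distinguishing and $i \sim j$; I must show $\neg(k \sim \ell)$ for all distinct $k,\ell \in n \setminus \{j\}$. Take such $k,\ell$ and suppose for contradiction that $k \sim \ell$. Since $\osim$ is $i$-distinguishing, the only way to have two distinct related elements is for one of them to equal $i$; so $\{k,\ell\}$ must contain $i$, say $k = i$ (the case $\ell = i$ is symmetric). Then $i \sim \ell$, and combined with $i \sim j$ this gives $\ell \sim j$ by transitivity, forcing $\ell = j$ (again because $\osim$ is $i$-distinguishing, $\ell$ and $j$ are distinct from $i$ and related, so must coincide), contradicting $\ell \in n \setminus \{j\}$.

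There is essentially no obstacle here beyond correctly handling the small case analysis on whether $i$ lies in $\{k,\ell\}$; the argument is elementary once the set-theoretic meanings of $F_i$, $F_j$, and $d_{ij}$ are unfolded. The one point requiring a little care is that $i$-distinguishing means \emph{at most} one nontrivial $\osim$-class, which must be of size two and contain $i$ (when $|n/\osim| = n-1$), or $\osim$ is the identity (when $|n/\osim| = n$); I would make sure the combinatorial claim covers both possibilities uniformly, which the contradiction argument above does. Finally I would remark that this combinatorial statement is symmetric in the index $j$ and transparently expressible as an $\A$-universal sentence, so the generalisation technique applies and the inequality transfers to every algebra-graph system $M$.
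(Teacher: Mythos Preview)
Your proof is correct and follows the same approach as the paper: reduce to algebras from graphs via the generalisation technique, then verify the combinatorial fact about equivalence relations directly (the paper defers this step to \cite[Lemma 4.2]{Hirsch09}, whereas you spell it out). The paper additionally remarks that the inequality holds in any cylindric algebra, which by Lemma~\ref{lem:A CA} gives an alternative one-line route.
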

\begin{proof}
It is enough to prove the lemma for algebras from graphs, as it is
clearly a set of $\A$-universal first-order sentences. 
But this is easy and was done in \cite[Lemma 4.2]{Hirsch09}.
It is also  easily seen to hold in cylindric algebras,
of which $\A$ is one (by Lemma~\ref{lem:A CA}).
\end{proof}

Now we examine the functions $R_i,S_i$.

\begin{lemma} \label{L:RSProps}
Let $M = (\A,\G,\B)$ be an algebra-graph system and let $i,j < n$ be distinct. Then:
\begin{enumerate}
 \renewcommand{\theenumi}{(\roman{enumi})}
\renewcommand{\labelenumi}{(\roman{enumi})}

\item \label{RSProps-1}  If $a,b\in\A$ and $a \leq b$ then $R_i(a) \leq R_i(b)$.

\item\label{RSProps0}   If $b \in \A$  and $b \leq d_{ij}$ then $R_i(b) = R_j(b)$.

\item\label{RSProps1}  If $a \in \A$ and $a \leq F_i$, then $S_i(R_i(a)) \geq a$.

\item\label{RSProps2} The map $f:\A\to\B$ given by
$f(a)= R_i(a\cdot d_{ij})$ is a
boolean homomorphism satisfying $f(F_i\cdot d_{ij})=1$.

\item\label{RSProps3}  If $B \in \B$, then $R_i(S_i(B)) = B$.
(Hence, $S_i$ is injective and $R_i$  surjective.)

\item\label{RSProps4} If $B\in\B$ then $c_iS_i(B)=S_i(B)$.
\end{enumerate}
\end{lemma}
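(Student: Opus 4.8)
The plan is to prove all six statements of Lemma~\ref{L:RSProps} by the generalisation technique: each is (or is equivalent to) a conjunction of $\A$-universal $L_{AGS}$-sentences, so it suffices to verify them in $M(\Gamma)$ for an arbitrary graph $\Gamma$, where $R_i,S_i$ have the explicit definitions from Definition~\ref{D:MGamma}. First I would check that each statement really is $\A$-universal. Implications such as $a\le b\to R_i(a)\le R_i(b)$ can be rewritten without existential $\A$-quantifiers: for instance $a\le b$ is $a+b=b$, and $R_i(a)\le R_i(b)$ is $R_i(a)+R_i(b)=R_i(b)$, all using only universally quantified $\A$-variables and $\B$-terms. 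The set-membership relation `$\in$' lets us phrase inclusions like $R_i(a)\le R_i(b)$ pointwise as $\forall p:\G(p\in R_i(a)\to p\in R_i(b))$, again $\A$-universal. So the whole lemma reduces to concrete computation in $M(\Gamma)$.

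Turning to the computations in $M(\Gamma)$, I would unwind the definitions $R_i(a)=\{K(i)\mid(K,\osim)\in F_i\cdot a\}$ and $S_i(B)=\{(K,\osim)\in F_i\mid K(i)\in B\}$, remembering (from the Remark after Definition~\ref{D:Fi}) that $(K,\osim)\in F_i$ exactly when $(K,\osim)$ is $i$-distinguishing, which is precisely when $K(i)$ is defined. For (i), monotonicity is immediate since $a\le b$ gives $F_i\cdot a\le F_i\cdot b$. For (ii), if $b\le d_{ij}$ then every $(K,\osim)\in F_i\cdot b$ satisfies $i\sim j$, whence $K(i)=K(j)$; combined with Lemma~\ref{L:FiFj} ($F_i\cdot d_{ij}\le F_j$) this shows the relevant $K(i)$ and $K(j)$ images coincide, giving $R_i(b)=R_j(b)$. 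For (iii), if $a\le F_i$ then for each $(K,\osim)\in a$ we have $K(i)\in R_i(a)$, so $(K,\osim)\in S_i(R_i(a))$, giving $a\le S_i(R_i(a))$. For (v), $R_i(S_i(B))=\{K(i)\mid(K,\osim)\in F_i,\;K(i)\in B\}$; surjectivity of the map $(K,\osim)\mapsto K(i)$ from $F_i$ onto $\Gamma\times n$ yields equality with $B$ (intersected with $\im$, which is everything). For (vi), one checks $c_iS_i(B)=S_i(B)$ by noting that $\equiv_i$ only changes the $K(i)$-coordinate and the $\sim$-class structure away from $i$, so membership in $S_i(B)$ is $\equiv_i$-invariant, hence closed under $c_i$.

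The item I expect to need the most care is (iv), the claim that $f(a)=R_i(a\cdot d_{ij})$ is a boolean homomorphism with $f(F_i\cdot d_{ij})=1$. Additivity ($f(a+b)=f(a)+f(b)$) and $f(0)=0$ follow from (i) and the set-theoretic description. The genuinely delicate property is multiplicativity/complementation: I must show $f$ respects complements, i.e.\ $f(-a)=-f(a)$ (equivalently $f$ preserves meets), which amounts to showing that the map $(K,\osim)\mapsto K(i)$ restricted to $F_i\cdot d_{ij}$ is \emph{injective} up to the relevant equivalence, so that distinct values of $K(i)$ separate $a$ from $-a$. This is where the constraint $b\le d_{ij}$ (forcing $i\sim j$, hence $|n/\osim|\le n-1$) is used: on $F_i\cdot d_{ij}$ the pair $(K,\osim)$ is determined by $K(i)$ together with the $i$-distinguishing equivalence relation, and since $F_i$ already fixes $\osim$ to be $i$-distinguishing with $i\sim j$, the atom is essentially determined by $K(i)$. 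Establishing that $f$ is a bijection onto $\B$ (or at least a boolean homomorphism) therefore rests on this injectivity bookkeeping over the structure of $S(\Gamma)$, and I would treat it as the main obstacle, verifying carefully that the fibre of $K(i)$ over $F_i\cdot d_{ij}$ is a single atom. Once (iv) is checked in $M(\Gamma)$, the generalisation technique transfers it, together with the other five items, to every algebra-graph system.
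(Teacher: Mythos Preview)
Your approach matches the paper's: both use the generalisation technique and verify each item by direct computation in $M(\Gamma)$, with (iv) hinging on exactly the uniqueness you identify (the paper isolates it as $(\ddag)$: for each $p\in\Gamma\times n$ there is a \emph{unique} atom $(K_p,{\approx})\in F_i\cdot d_{ij}$ with $K_p(i)=p$). One slip to fix in (vi): you have the $\equiv_i$ semantics backwards --- by definition $(K,\osim)\equiv_i(K',\osim')$ means $K(i)=K'(i)$ and $\osim_i=\osim'_i$ are \emph{preserved}, not changed, and it is precisely this preservation (of $K(i)\in B$ and of $i$-distinguishing\-ness) that makes membership in $S_i(B)$ $\equiv_i$-invariant.
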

\begin{proof}
It is again sufficient to show that this is true for any structure $M(\Gamma)$ from
Definition~\ref{D:MGamma}.
Parts~\ref{RSProps-1} and~\ref{RSProps0} are easy and left to the reader.

\ref{RSProps1} Let $(K,{\sim}) \in a\leq F_i$ be arbitrary. Recall that
\[
R_i(a) = \{K(i) \mid (K, {\sim}) \in a \cdot F_i\}.
\]
Then $(K, {\sim}) \in F_i$, so $K(i)$ is defined,  $K(i) \in R_i(a)$, and hence
\[
(K, {\sim}) \in \{(K', {\sim'}) \in F_i \mid K'(i) \in R_i(a)\} = S_i(R_i(a)).
\]
This shows that $a \leq S_i(R_i(a))$.

\medskip

\ref{RSProps2} First, observe that \begin{itemize}
\item [$(\ddag)$]
for any $p\in\Gamma\times n$, there is a unique atom
$(K_p,\approx)\in F_i\cdot d_{ij}$
with $K_p(i)=p$.
\end{itemize}
  For, we may define ${\approx}\in Eq(n)$ to be the
(unique) $i$-distinguishing relation with $i \approx j$ and define $K_p$ by
\[
K_p(i) = K_p(j) = p, \qquad K_p(k) \text{ undefined if } k \neq i,j.
\]
Then $(K_p, {\approx})$ is certainly a valid element of $\At(\Gamma)$ contained in $F_i$ and $d_{ij}$ and with $K_p(i)=p$, and it is clearly the only such atom.

Returning to the lemma, it is clear that $f(0)=0$ and $f(a+b)=f(a)+f(b)$ for all $a,b\in\A$.
Let $p\in\Gamma\times n$ be arbitrary.
For any $a\in\A$, we have $p\in f(a)$ iff $(K_p,\approx)\in a$.
Hence, $p\notin f(a)$ iff $(K_p,\approx)\notin a$,
iff $(K_p,\approx)\in -a$, iff $p\in f(-a)$.
This shows that $f(-a)=-f(a)$.  Hence also, $f(1)=1$.
So $f$ is a boolean homomorphism.
If $p\in\Gamma \times n$ then $(K_p,\approx)\in F_i\cdot d_{ij}$
and so $p\in f(F_i\cdot d_{ij})$. As $p$ was arbitrary,
$f(F_i\cdot d_{ij})=1$.

\medskip

\ref{RSProps3} Let $B \in \B$. First note that
\[
R_i(S_i(B)) = \{K(i) \mid (K, {\sim}) \in S_i(B)\}
            = \{K(i) \mid (K, {\sim}) \in F_i, K(i) \in B\} \subseteq B.
\]
For the converse,   let $p \in B$ be given.
By $(\ddag)$ above,  $(K_p,\approx)\in S_i(B)$,
and $p = K_p(i) \in R_i(S_i(B))$.
This shows that $B \subseteq R_i(S_i(B))$.

\medskip

\ref{RSProps4} Let $(K,\osim)\equiv_i(K',\osim')\in S_i(B)$,
so that $(K',\osim')\in F_i$ and $K'(i)\in B$.
Then $\osim_i=\osim'_i$, so $(K,\osim)\in F_i$ as well,
and $K(i)=K'(i)\in B$.
Consequently, $(K,\osim)\in S_i(B)$.
Hence, $c_iS_i(B)\subseteq S_i(B)$, and the converse is trivial.
\end{proof}

Next, we examine the substitution operators.

\begin{definition}
For $\osim\in Eq(n)$
let $d_\osim=\prod_{i,j<n,i\sim j}d_{ij}\cdot\prod_{i,j<n,i\not\sim j}-d_{ij}$.
\end{definition}

\begin{lemma}\label{lem:sub tech}
Let $M = (\A,\G,\B)$ be an algebra-graph system,
 $a\in\A$, $i,j<n$,  and $\sigma,\tau:n\to n$.
\begin{enumerate}
 \renewcommand{\theenumi}{(\roman{enumi})}
\renewcommand{\labelenumi}{(\roman{enumi})}

\item\label{lem:sub tech1} The map $s_\sigma:\A\to\A$ is a boolean homomorphism.

\item\label{lem:sub tech1.5} $s_{\sigma\circ\tau}=s_\sigma\circ s_\tau$.

\item\label{lem:sub tech2} $s_\sigma d_{ij}=d_{\sigma(i)\sigma(j)}$.

\item\label{lem:sub tech3}  If $\osim\in Eq(n)$, then $s_\sigma d_{\osim^\sigma}\geq d_\osim$.

\item\label{lem:sub tech4}
If $\sigma[n\setminus\{i\}]=n\setminus\{j\}$,
then $R_j(s_\sigma a)\leq R_i(a)$.

\item\label{lem:sub tech5} If $i\notin \im\sigma$ then $c_is_\sigma a=s_\sigma a$,
and if $\sigma$ is one-one then $c_{\sigma(i)}s_\sigma a=s_\sigma c_ia$.

\end{enumerate}
\end{lemma}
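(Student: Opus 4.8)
The plan is to prove each of the six items by the generalisation technique: since every item is an equation or (quasi-)equational statement between $\A$-sorted or $\B$-sorted terms, each is expressible by a family of $\A$-universal $L_{AGS}$-sentences, and so it suffices to verify it in $M(\Gamma)$ for an arbitrary graph $\Gamma$. I will therefore work throughout in a concrete $\At(\Gamma)^+=\A(\Gamma)$, where the atoms are pairs $(K,\osim)$ and the operators $s_\sigma$, $c_i$, $d_{ij}$ act as in the definitions. For the items that only mention the cylindric-type or polyadic-equality structure of $\A$ (namely \ref{lem:sub tech1}, \ref{lem:sub tech1.5}, \ref{lem:sub tech2}, \ref{lem:sub tech3}, and \ref{lem:sub tech5}), I expect each to follow either from a direct atom-level computation or by citing the standard polyadic-equality identities, which hold in $\A(\Gamma)$ by Proposition~\ref{T:algisca} together with its polyadic expansion.

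First I would dispose of the purely algebraic items. For \ref{lem:sub tech1}, since $s_\sigma X=\{(K,\osim)\mid(K,\osim)^\sigma\in X\}$ is a preimage under the total map $(K,\osim)\mapsto(K,\osim)^\sigma$, it automatically commutes with $\cup$, $\setminus$, $\emptyset$, and the whole space, so it is a boolean homomorphism. Item \ref{lem:sub tech1.5} reduces to checking $(K,\osim)^{\sigma\circ\tau}=((K,\osim)^\tau)^\sigma$ at the level of atoms, which is a functoriality computation on both the $\osim$-part (where $i\sim^{\sigma\circ\tau}j\iff\sigma\tau(i)\sim\sigma\tau(j)$) and the $K$-part; the one-one case $K^\sigma=K\circ\sigma$ makes this transparent and the general case follows by the well-definedness already granted in the definition. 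Items \ref{lem:sub tech2} and \ref{lem:sub tech3} are the standard interactions of substitutions with diagonals; \ref{lem:sub tech2} should be a one-line atom computation, and for \ref{lem:sub tech3} I would show that any atom below $d_\osim$ satisfies exactly the $\sim$-equalities, hence its $s_\sigma$-image lies below $d_{\osim^\sigma}$, giving $d_\osim\leq s_\sigma d_{\osim^\sigma}$ after rearranging the homomorphism $s_\sigma$. Item \ref{lem:sub tech5} is the usual commutation of cylindrifications with substitutions: when $i\notin\im\sigma$ the $i$-th coordinate is irrelevant to membership in $s_\sigma a$ so $c_i$ fixes it, and when $\sigma$ is one-one the identity $c_{\sigma(i)}s_\sigma=s_\sigma c_i$ is a known polyadic-equality law; both can be read off the definitions of $\equiv_i$ and $-^\sigma$ on atoms.

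The genuinely new item is \ref{lem:sub tech4}, relating the $\B$-valued map $R_j$ to substitution, and this is where the interplay between the algebra sort and the graph sort enters. Here the hypothesis $\sigma[n\setminus\{i\}]=n\setminus\{j\}$ is exactly what guarantees that whenever $\osim^\sigma$ is $i$-distinguishing the value $K^\sigma(i)$ equals $K(j)$, by the defining clause $K^\sigma(i)=K(\ell)$ for the unique $\ell\notin\sigma[n\setminus\{i\}]=n\setminus\{j\}$, i.e.\ $\ell=j$. I would unwind $R_j(s_\sigma a)=\{(s_\sigma a\cdot F_j)\text{-atom values under }K\mapsto K(j)\}$ and show that each such atom $(K,\osim)$, lying in $F_j$ with $(K,\osim)^\sigma\in a$, forces $(K^\sigma,\osim^\sigma)\in F_i$ and $K^\sigma(i)=K(j)$, so that $K(j)=K^\sigma(i)\in R_i(a)$; as $(K,\osim)$ ranges over $s_\sigma a\cdot F_j$, this yields $R_j(s_\sigma a)\subseteq R_i(a)$, which is the claimed inequality.

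The main obstacle will be \ref{lem:sub tech4}: I must check carefully that the combinatorial condition $\sigma[n\setminus\{i\}]=n\setminus\{j\}$ propagates the $i$-distinguishing property correctly through $-^\sigma$ and identifies $K^\sigma(i)$ with $K(j)$ rather than some other coordinate, keeping straight which atoms lie in $F_i$ versus $F_j$. The remaining subtlety, shared by \ref{lem:sub tech3} and \ref{lem:sub tech4}, is that $\sim^\sigma$ cannot in general be recovered from $K$ and $\sigma$, so I will phrase all arguments purely in terms of the definition of $-^\sigma$ acting on the full atom $(K,\osim)$ and never attempt to invert it.
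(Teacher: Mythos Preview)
Your approach is essentially the paper's own: verify each item in $M(\Gamma)$ by an atom-level computation and invoke the generalisation technique. There is one slip, however. For \ref{lem:sub tech1.5} the atom-level identity you need is $((K,\osim)^\sigma)^\tau=(K,\osim)^{\sigma\circ\tau}$, not $((K,\osim)^\tau)^\sigma=(K,\osim)^{\sigma\circ\tau}$. Since $s_\sigma X=\{\kappa\mid\kappa^\sigma\in X\}$, unwinding $s_\sigma s_\tau a$ gives $\{\kappa\mid(\kappa^\sigma)^\tau\in a\}$, so the superscripts must compose as $\sigma$-then-$\tau$. At the equivalence-relation level this reads $i\mathrel{(\osim^\sigma)^\tau}j\iff\tau(i)\sim^\sigma\tau(j)\iff\sigma(\tau(i))\sim\sigma(\tau(j))$, which matches $\osim^{\sigma\circ\tau}$; your stated order would give $\tau(\sigma(i))\sim\tau(\sigma(j))$ and fails for non-commuting $\sigma,\tau$. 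Your own check ``$i\sim^{\sigma\circ\tau}j\iff\sigma\tau(i)\sim\sigma\tau(j)$'' is correct and would have flagged this had you set it against $(\osim^\tau)^\sigma$. With the order corrected, the rest of your plan---including the key computation for \ref{lem:sub tech4} via $K^\sigma(i)=K(j)$ when $\sigma[n\setminus\{i\}]=n\setminus\{j\}$---matches the paper's proof.
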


\begin{proof}
Again, it is enough to show that the lemma is true for
an arbitrary algebra-graph system $M(\Gamma)$ from a graph $\Gamma$, as all statements
are
definable by $\A$-universal first-order sentences.

\ref{lem:sub tech1}  By the definitions,
$s_\sigma\emptyset=\emptyset$,
$s_\sigma1=1$, and for any $a,b\in\A(\Gamma)$,
$s_\sigma(a+b)=\{\kappa\in S(\Gamma)\mid\kappa^\sigma\in a+b\}=
\{\kappa\mid\kappa^\sigma\in a\}\cup\{\kappa\mid\kappa^\sigma\in b\}=s_\sigma a+s_\sigma b$,
and $s_\sigma(-a)=\{\kappa\in S(\Gamma)\mid \kappa^\sigma\in-a\}
=S(\Gamma)\setminus\{\kappa\mid\kappa^\sigma\in a\}=-s_\sigma a$.

\medskip

\ref{lem:sub tech1.5} Let $(K,\osim)\in S(\Gamma)$ be arbitrary.
We claim that $((K,\osim)^\sigma)^\tau=(K,\osim)^{\sigma\circ\tau}$:
that is, 
\[
((K^\sigma)^\tau,(\osim^\sigma)^\tau)=(K^{\sigma\circ\tau},\osim^{\sigma\circ\tau}).
\]
For $i,j<n$,
plainly $i\mathrel{(\osim^\sigma)^\tau}j$
iff $\tau(i)\sim^\sigma\tau(j)$ iff $\sigma(\tau(i))\sim\sigma(\tau(j))$
iff $i\sim^{\sigma\circ\tau}j$,
so $(\osim^\sigma)^\tau=\osim^{\sigma\circ\tau}$.
Let $i<n$.
Then $(K^\sigma)^\tau(i)$ is defined
iff $(\osim^\sigma)^\tau$ is \ds i,
iff $\osim^{\sigma\circ\tau}$ is \ds i,
iff $K^{\sigma\circ\tau}(i)$ is defined.
In that case, 
$(K^\sigma)^\tau(i)=K^\sigma(j)$ where $j\notin\tau[n\setminus\{i\}]$.
Then $\osim^\sigma$ is \ds j
and $K^\sigma(j)=K(k)$ where $k\notin\sigma[n\setminus\{j\}]$.
But now, $k\notin\sigma\circ\tau[n\setminus\{i\}]$,
so $K^{\sigma\circ\tau}(i)=K(k)$ as well.
This proves the claim.
Consequently,
$s_\sigma s_\tau a=\{\kappa\in S(\Gamma)\mid \kappa^\sigma\in s_\tau a\}
=\{\kappa\mid (\kappa^\sigma)^\tau\in a\}=\{\kappa\mid \kappa^{\sigma\circ\tau}\in a\}
=s_{\sigma\circ\tau}a$.

\ref{lem:sub tech2} We have
$s_\sigma d_{ij}=\{(K,\osim)\mid(K^\sigma,\osim^\sigma)\in d_{ij}\}
=\{(K,\osim)\mid\sigma(i)\sim\sigma(j)\}=d_{\sigma(i)\sigma(j)}$.

\ref{lem:sub tech3}
By \ref{lem:sub tech1} and \ref{lem:sub tech2},
\begin{align*}
s_\sigma d_{\osim^\sigma} &=
s_\sigma\Big(\prod_{i,j<n,\,i\sim^\sigma j}d_{ij}\cdot\prod_{i,j<n,\,i\not\sim^\sigma j}-d_{ij}\Big)
=\prod_{i,j<n,\,i\sim^\sigma j}\hskip-6pt
s_\sigma d_{ij}\cdot\prod_{i,j<n,i\not\sim^\sigma j}\hskip-6pt-s_\sigma d_{ij}
\\
&=\prod_{i,j<n,\,\sigma(i)\sim\sigma( j)}d_{\sigma(i)\sigma(j)}\cdot
\prod_{i,j<n,\,\sigma(i)\not\sim\sigma( j)}-d_{\sigma(i)\sigma(j)}.
\end{align*}
The last expression comprises some of the conjuncts (all of them, if $\sigma$ is onto)
of $d_\osim$.
So $s_\sigma d_{\osim^\sigma}\geq d_\osim$ as required
(with equality if $\sigma$ is onto).

\ref{lem:sub tech4}
Let  $p\in R_j(s_\sigma a)$,
so that $p=K(j)$ for some $(K,\osim)\in s_\sigma a\cdot F_j$.
Hence, $(K^\sigma,\osim^\sigma)\in a$, and  $\osim$ is \ds j.
As $\sigma[n\setminus\{i\}]=n\setminus\{j\}$,
it follows that $\osim^\sigma$ is \ds i.
So $(K^\sigma,\osim^\sigma)\in a\cdot F_i$, and  $K^\sigma(i)$ is defined and is plainly $K(j)$, ie.\ $p$.
Hence $p\in R_i(a)$ as required.

\ref{lem:sub tech5} Let $i\notin\im\sigma$ and $(K,\osim)\equiv_i(K',\osim')\in s_\sigma a$,
so that $(K'^\sigma,\osim'^\sigma)\in a$.
We show that $(K^\sigma,\osim^\sigma)=(K'^\sigma,\osim'^\sigma)$.
As $\osim_i=\osim'_i$ and $i\notin\im\sigma$, we have $\osim^\sigma=\osim'^\sigma$.
Take $j<n$ such that $\osim^\sigma$ is \ds j.
Then plainly $i\notin\sigma[n\setminus\{j\}]$,
so $K^\sigma(j)=K(i)=K'(i)=K'^\sigma(j)$.
It follows that $(K^\sigma,\osim^\sigma)=(K'^\sigma,\osim'^\sigma)\in a$,
so $(K,\osim)\in s_\sigma a$.
This proves that $c_is_\sigma a\leq s_\sigma a$. The converse is immediate by 
Lemma~\ref{lem:A CA}.

Now suppose that $\sigma:n\to n$ is one-one.
Then plainly, for any atoms $(K,\osim)$, $(K,\osim')$, we have
$(K^\sigma,\osim^\sigma)=(K\circ\sigma,\osim^\sigma)$,
and $(K,\osim)\equiv_{\sigma(i)}(K',\osim')$
iff $(K\circ\sigma,\osim^\sigma)\equiv_i(K'\circ\sigma,\osim'^\sigma)$.

Let $(K,\osim)$ be arbitrary.
Then $(K,\osim)\in c_{\sigma(i)}s_\sigma a$
iff there is $(K',\osim')$ with $(K,\osim)\allowbreak\equiv_{\sigma(i)}(K',\osim')$
and $(K'\circ\sigma,\osim'^\sigma)\in a$,
iff there is $(K',\osim')$ with $(K\circ\sigma,\osim^\sigma)\equiv_{i}(K'\circ\sigma,\osim'^\sigma)$
and $(K'\circ\sigma,\osim'^\sigma)\in a$,
iff there is $(K^*,\osim^*)$ with $(K\circ\sigma,\osim^\sigma)\equiv_{i}(K^*,\osim^*)\in a$,
iff $(K\circ\sigma,\osim^\sigma)\in c_ia$, iff
$(K,\osim)\in s_\sigma c_ia$ as required.
 \end{proof}

\subsection{Simple algebras}
Recall  that a cylindric algebra $\A$ is
\emph{simple} if $|\A| > 1$ and for any algebra $\A'$ with cylindric signature,
any homomorphism $\varphi : \A \to \A'$ is either trivial or injective.
We will see that the cylindric reduct of the algebra part of an algebra-graph
system is simple, so that if it is representable, it
has  a representation that is  just an embedding into
a single cylindric set algebra.

\begin{definition}
\index{discriminator term}
Let $\C$ be a class of BAOs of the same signature $L$. An $L$-term $d(x)$
satisfying
\[
d(a) = \begin{cases}
1 & \text{if } a > 0, \\
0 & \text{if } a = 0.
\end{cases}
\]
for each $a \in \A \in \C$, is called a \emph{discriminator term for $\C$}.
\end{definition}

\begin{proposition} \label{P:discrim}
The class $\{\A(\Gamma) \mid \Gamma \text{ a graph}\}$ has a discriminator term,
namely, $c_1 \dots c_{n-1}\allowbreak c_{n-1} \dots c_{1}x$.
\end{proposition}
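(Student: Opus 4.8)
The plan is to verify the two discriminator conditions separately. Since each $c_i$ is a normal operator, so is the composite $d$, whence $d(0)=0$ and the zero case is immediate. For the nonzero case I must show $d(a)=S(\Gamma)$ whenever $a>0$. As $\A(\Gamma)=\At(\Gamma)^+$ is a complex algebra, it is complete and atomic and each $c_i$ is completely additive; hence $d$ is completely additive and $d(a)=\bigcup\{d(\{s\})\mid s\in a\}$. It therefore suffices to prove $d(\{s\})=S(\Gamma)$ for every single atom $s\in S(\Gamma)$.

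By Proposition~\ref{T:algisca} the cylindric reduct of $\A(\Gamma)$ is a cylindric algebra, so the $c_i$ pairwise commute and each is an idempotent, inflationary, monotone map, i.e.\ a closure operator. A standard computation then shows that the composite $c_1\cdots c_{n-1}$ is itself a closure operator whose closed sets are exactly the sets closed under every $c_i$ with $0<i<n$; these are precisely the unions of classes of the equivalence relation $\Theta=\equiv_1\vee\cdots\vee\equiv_{n-1}$ (the transitive closure of $\bigcup_{0<i<n}\equiv_i$). Consequently $d(\{s\})=\{t\in S(\Gamma)\mid t\mathrel\Theta s\}$, and in particular the doubled $c_{n-1}$ and the up-then-down arrangement of the term are immaterial. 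The whole proposition thus reduces to the purely combinatorial claim that $\Theta$ is the total relation on $S(\Gamma)$, equivalently that the graph on $S(\Gamma)$ whose edges are $\bigcup_{0<i<n}\equiv_i$ is connected.

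The key point is that omitting $\equiv_0$ costs nothing. Recall $(K,\osim)\equiv_i(K',\osim')$ iff $K(i)=K'(i)$ and $\osim_i=\osim'_i$, so a single $\equiv_i$-step fixes the value $K(i)$ and the restriction of $\osim$ to $n\setminus\{i\}$ but leaves every other coordinate of $K$, and the relationship of $i$ to the remaining indices, entirely free. Hence a move in any dimension $i\neq 0$ can reset $K(0)$ to a prescribed value, and a move in dimension $i$ can introduce or remove any identification of the form $i\sim j$. Since $n\geq 3$, every two-element subset $\{j,k\}\subseteq n$ meets $\{1,\dots,n-1\}$, so every identification—including those involving the index $0$—can be created or destroyed by a step in an allowed dimension. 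Finally, because $\Gamma\times n$ carries all edges between distinct copies, the non-independence requirement in the definition of $S(\Gamma)$ is easy to keep satisfied along the way, for instance by placing intermediate values of $K$ in two different copies.

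The remaining work, which I expect to be the main obstacle, is to assemble these moves into a finite $\bigcup_{0<i<n}\equiv_i$-chain joining an arbitrary $s$ to an arbitrary $t$. Since $\Theta$ is an equivalence relation it is enough to connect every atom to one fixed reference atom, and I would do this by a case analysis on the three types of $\osim$ (discrete, a single nontrivial pair, or coarser) for both $s$ and the reference: first adjust $\osim$ to the target relation one identification at a time, tracking how the domain of $K$—governed by the $i$-distinguishing condition—expands or shrinks as $\osim$ changes, and then correct the finitely many values $K(i)$; at each step one checks that the intermediate pair is a legitimate element of $S(\Gamma)$. The essential use of $n\geq 3$ is that at least two dimensions always remain available, which is exactly what lets the chain reroute around the forbidden dimension $0$. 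This bookkeeping is routine but is where all the care is required.
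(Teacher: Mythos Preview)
Your approach is correct. The paper does not actually argue the proposition: it simply cites line~(5) in the proof of \cite[Lemma~5.1]{Hirsch09}, so there is no in-paper proof to compare against. What you supply is a self-contained argument, and the reduction is sound. Using Proposition~\ref{T:algisca} the $c_i$ commute and are idempotent, so the palindromic term collapses to $c_1\cdots c_{n-1}x$; since commuting closure operators compose to a closure operator whose closed sets are the common closed sets, $c_1\cdots c_{n-1}\{s\}$ is exactly the $\Theta$-class of $s$ for $\Theta=\equiv_1\vee\cdots\vee\equiv_{n-1}$ (equivalently: the $\equiv_i$ permute, so their relational composite already equals their join). The problem then becomes purely combinatorial: show $\Theta$ is total.

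Your sketch for that is on the right track and goes through. Two small remarks. First, your sentence ``leaves every other coordinate of $K$ \ldots\ entirely free'' slightly overstates matters, since after an $\equiv_i$-step the new $K'$ must still satisfy the $S(\Gamma)$ constraints tied to the new $\osim'$; you acknowledge this later, but it is worth saying up front. Second, the case analysis is not entirely trivial when $\Gamma$ is very small (e.g.\ empty): then atoms with $|n/{\osim}|\in\{n-1,n\}$ do not exist, so some ``obvious'' intermediate steps are unavailable and the chain must stay within $|n/{\osim}|\le n-2$. This is still routine, but it is the one place where a careless argument could stumble.
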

\begin{proof}
See line (5) in the proof of \cite[Lemma 5.1]{Hirsch09}.
\end{proof}
We deduce the following in a standard way.
\begin{corollary} \label{C:simple}
In every algebra-graph system $ (\A, \G, \B)$, the cylindric-type reduct 
$\A\restr{L_{\ca n}}$  of $\A$ is simple, as is each of its subalgebras.
\end{corollary}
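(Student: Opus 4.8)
The plan is to exploit the discriminator term supplied by Proposition~\ref{P:discrim} together with the standard fact that a nontrivial algebra possessing a discriminator term is simple. Throughout, write $d(x)=c_1\cdots c_{n-1}c_{n-1}\cdots c_1x$ for the term from Proposition~\ref{P:discrim}.

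First I would transfer the discriminator property to the abstract setting via the generalisation technique. The assertion that $d$ is a discriminator term, namely
\[
\forall x:\A\big((x=0\to d(x)=0)\wedge(x\neq 0\to d(x)=1)\big),
\]
has no quantifiers over the $\A$-sort beyond the leading universal one, so it is $\A$-universal; by Proposition~\ref{P:discrim} it holds in $M(\Gamma)$ for every graph $\Gamma$, hence lies in $\U$ and so holds in every algebra-graph system. The same applies to the sentence $0\neq 1$ (vacuously $\A$-universal), which is true in each $M(\Gamma)$ because $S(\Gamma)$ is nonempty for every graph $\Gamma$: since $n\geq 3$, the set $S(\Gamma)$ always contains pairs $(K,\osim)$ with $K$ nowhere defined and $|n/\osim|<n-1$ (take $\osim$ with a single class of size $3$), independently of $\Gamma$. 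Thus in any algebra-graph system $(\A,\G,\B)$ the cylindric reduct $\A\restr{L_{\ca n}}$ is a nontrivial cylindric algebra (by Lemma~\ref{lem:A CA}) on which $d$ acts as a discriminator.

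Next I would run the classical argument. Let $\C$ denote $\A\restr{L_{\ca n}}$ or any of its subalgebras. Since $\C$ contains $0$ and $1$ and is closed under the $c_i$, it inherits both $0\neq 1$ and the discriminator behaviour of $d$, so $|\C|>1$. Suppose $\varphi:\C\to\C'$ is a homomorphism into a cylindric-type algebra that is not injective, and pick $a\neq b$ in $\C$ with $\varphi(a)=\varphi(b)$. Putting $c=(a\cdot -b)+(-a\cdot b)$ we have $c>0$ in the boolean order, while $\varphi(c)=0$; applying $d$ and using that homomorphisms commute with the $c_i$ and preserve $1$ gives $\varphi(1)=\varphi(d(c))=d(\varphi(c))=d(0)=0$, so $\C'$ is trivial and $\varphi$ is trivial. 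Hence $\C$ is simple.

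The only point needing care is the justification that the discriminator statement is genuinely $\A$-universal and that the instance $0\neq1$ survives for every graph, including the edgeless and empty ones; once this is secured, the transfer through $\U$ and the congruence-free computation are routine. No new combinatorics on $S(\Gamma)$ is required beyond what Proposition~\ref{P:discrim} already provides.
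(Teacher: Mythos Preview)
Your proposal is correct and follows essentially the same route as the paper: transfer the discriminator property of $d(x)=c_1\cdots c_{n-1}c_{n-1}\cdots c_1x$ from Proposition~\ref{P:discrim} to arbitrary algebra-graph systems via the generalisation technique, then run the standard symmetric-difference argument to show any non-injective homomorphism is trivial. Your extra care in verifying $0\neq 1$ (by exhibiting an atom of $S(\Gamma)$ that exists for every graph, including edgeless and empty ones) is a detail the paper leaves implicit, but otherwise the arguments are the same.
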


\begin{proof}
Let $\A'$ an algebra with cylindric signature, $\A^*\subseteq \A\restr{L_{\ca n}}$, and
$\varphi : \A^* \to \A'$ a homomorphism.
It follows from Proposition~\ref{P:discrim}, by the generalisation technique,
that $\A^*$ has a discriminator term $d(x)=c_1 \dots c_{n-1} c_{n-1} \dots\allowbreak c_{1}x$.
Suppose $\varphi$ is not injective, i.e. there are distinct $a,b \in \A^*$ such
that $\varphi a = \varphi b$. Then $(a - b) + (b - a) \neq 0$ and therefore
\begin{align*}
1 = \varphi d((a - b) + (b - a)) 
  &= d((\varphi a - \varphi b) + (\varphi b - \varphi a)) 
  = d(0) 
  = 0.
\end{align*}
Thus $\varphi$ is trivial if it is not injective.
\end{proof}

\begin{lemma} \label{L:easyembed}
Let $\A \in \rca{n}$ be a representable cylindric algebra. If $\A$ is simple,
then it has a representation that is an embedding into a single cylindric set algebra.
\end{lemma}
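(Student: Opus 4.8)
The plan is to exploit the fact that a representation already provides an embedding into a \emph{product} of cylindric set algebras, and then use simplicity to replace the product by a single factor. So I would start by invoking the hypothesis $\A\in\rca n$: there is a representation, i.e.\ an injective homomorphism $h:\A\to\prod_{k\in I}\C_k$, where each $\C_k$ is a cylindric set algebra with some base $U_k$. For each index $k\in I$, composing with the projection $\pi_k$ onto the $k$-th factor gives a homomorphism $h_k=\pi_k\circ h:\A\to\C_k$ into a \emph{single} cylindric set algebra.

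Next I would apply the hypothesis that $\A$ is simple. By definition this means $|\A|>1$ and every homomorphism out of $\A$ (into a cylindric-signature algebra) is either trivial or injective. Hence each $h_k$ is either trivial, in which case $h_k(0)=h_k(1)$, or injective. The key observation is that not all the $h_k$ can be trivial: since $|\A|>1$ we have $0\neq1$ in $\A$, and since $h$ is injective we get $h(0)\neq h(1)$, so $h(0)$ and $h(1)$ must differ in at least one coordinate $k$, giving $h_k(0)\neq h_k(1)$ for that $k$. For such a $k$ the map $h_k$ is non-trivial, and therefore injective by simplicity.

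Finally I would conclude that this particular $h_k:\A\to\C_k$ is an injective homomorphism into a single cylindric set algebra, which is exactly a representation of $\A$ that is an embedding into a single cylindric set algebra, as required.

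I do not expect any serious obstacle here: this is the standard fact that a simple algebra embedding into a product must already embed into one of the factors, and the only point needing care is the dichotomy coming from simplicity together with the remark that injectivity of $h$ forces some coordinate to separate $0$ from $1$. Everything else is routine bookkeeping with projections.
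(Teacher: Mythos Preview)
Your proposal is correct and follows essentially the same approach as the paper: take a representation into a product of set algebras, compose with a projection, and use simplicity to upgrade the resulting non-trivial homomorphism to an injection. The only cosmetic difference is that the paper observes that \emph{every} projection $\pi_\ell\circ h$ is already non-trivial (since in each cylindric set algebra $1={}^nS_\ell\neq\emptyset=0$ because the base $S_\ell$ is non-empty), so any index works, whereas you argue that \emph{some} projection must separate $0$ from $1$ by injectivity of $h$; either observation suffices.
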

\begin{proof}
There is a representation $h : \A \to \prod_{k \in K} \S_k$, where $K$ is an
index set and for each $k \in K$, $S_k$ is a non-empty base set and
\[
\S_k = (\wp(^nS_k), \cup, \setminus, \emptyset, ^nS_k, D_{ij}^k, C_i^k\mid i,j < n)
\]
Because $h$ is injective and $|\A| > 1$, the index set $K \neq \emptyset$. So
choose $\ell \in K$ and let $\pi$ be the projection
of $\prod_{k \in K} \S_k$ onto $S_\ell$. Then $\pi \circ h$ is certainly a
homomorphism and because
\[
\pi \circ h (1) = {}^nS_\ell \neq \emptyset = \pi \circ h (0),
\]
it is non-trivial. But because $\A$ is simple, $\pi \circ h:\A\to\S_\ell$ is injective and
thus a representation that is an embedding into a single 
cylindric set algebra.
\end{proof}

\section{Ultrafilters}
\label{S:netpatchsys}

We now examine ultrafilters in algebra-graph systems.

\subsection{Ultrafilter structures from algebra-graph systems}

By the generalisation technique, if $M=(\A,\G,\B)$ is an algebra-graph system
then $\A$ is an $L_{\pea n}$-BAO, so its ultrafilter
structure $\A_+$ (see Definition~\ref{def:uf str}) is defined; it satisfies
\[
\begin{array}{rcl}
R_{d_{ij}}(\nu)&\iff & d_{ij}\in\nu,
\\
R_{c_i}(\mu,\nu)&\iff& c_i[\mu] = \{c_i a \mid a \in \mu\} \subseteq \nu,
\\
R_{s_\sigma}(\mu,\nu)
&\iff& s_\sigma[\mu]= \{s_\sigma a \mid a \in \mu\} \subseteq\nu.
\end{array}
\]
We view $\A_+$ as a polyadic equality atom structure (Definition~\ref{def:polyas}) by defining
\[
\begin{array}{rcl}
\mu \equiv_i \nu &\iff &R_{c_i}(\mu,\nu),
\\
\nu^\sigma&=&\{a\in\A\mid s_\sigma a\in\nu\}.
\end{array}
\]
Lemma~\ref{lem:ss uf}, the comment following it, and Lemma~\ref{L:ultraproj}\ref{ultraproj5} show that this 
gives a well defined polyadic equality atom structure which, when
regarded as an $L_{\pea n}$-atom structure as in Definition~\ref{def:polyas},
yields the ultrafilter structure $\A_+$ as above.

\begin{lemma}\label{lem:ss uf}
Let $M = (\A,\G,\B)$ be an algebra-graph system  and $\sigma,\tau:n\to n$.
Then for any ultrafilter $\nu$ of $\A$,
the set $\nu^\sigma$ is also an ultrafilter of $\A$, 
and $\nu^{\sigma\circ\tau}=(\nu^\sigma)^\tau$.
\end{lemma}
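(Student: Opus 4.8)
The plan is to reduce everything to the two facts about substitution operators already established in Lemma~\ref{lem:sub tech}: that $s_\sigma:\A\to\A$ is a boolean homomorphism (part~\ref{lem:sub tech1}) and that $s_{\sigma\circ\tau}=s_\sigma\circ s_\tau$ (part~\ref{lem:sub tech1.5}). The key observation is that, by its very definition, $\nu^\sigma=\{a\in\A\mid s_\sigma a\in\nu\}$ is nothing other than the preimage $s_\sigma^{-1}[\nu]$ of $\nu$ under the homomorphism $s_\sigma$.

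First I would prove that $\nu^\sigma$ is an ultrafilter by invoking the standard fact that the preimage of an ultrafilter under a boolean homomorphism is again an ultrafilter. Concretely, I would check the three requirements directly from the homomorphism properties of $s_\sigma$. Since $s_\sigma$ preserves meets and order, $\nu^\sigma$ is closed under $\cdot$ and is upward closed, so it is a filter; since $s_\sigma 0=0\notin\nu$, it is proper; and since $s_\sigma(-a)=-s_\sigma a$ together with the fact that $\nu$ is an ultrafilter forces exactly one of $s_\sigma a,\,-s_\sigma a$ to lie in $\nu$, we get that exactly one of $a,\,-a$ lies in $\nu^\sigma$, so it is an ultrafilter. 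Equivalently, one can phrase this by noting that the characteristic map $\chi_\nu:\A\to 2$ is a boolean homomorphism, whence so is $\chi_\nu\circ s_\sigma:\A\to 2$, and $\nu^\sigma$ is precisely the ultrafilter it determines.

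For the composition identity I would simply unfold the definitions and apply Lemma~\ref{lem:sub tech}\ref{lem:sub tech1.5}: for any $a\in\A$,
\[
a\in(\nu^\sigma)^\tau
\iff s_\tau a\in\nu^\sigma
\iff s_\sigma(s_\tau a)\in\nu
\iff s_{\sigma\circ\tau}a\in\nu
\iff a\in\nu^{\sigma\circ\tau},
\]
so $(\nu^\sigma)^\tau=\nu^{\sigma\circ\tau}$. I do not anticipate any genuine obstacle here: the statement is essentially a corollary of Lemma~\ref{lem:sub tech}. The only point requiring a moment's care is the verification that the preimage is a \emph{proper} and \emph{prime} filter, which is exactly where the full strength of $s_\sigma$ being a boolean homomorphism (rather than merely additive and normal) is used, via $s_\sigma 0=0$ and $s_\sigma(-a)=-s_\sigma a$.
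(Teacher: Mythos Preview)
Your proposal is correct and follows essentially the same approach as the paper: invoke Lemma~\ref{lem:sub tech}\ref{lem:sub tech1} to see that $\nu^\sigma$ is the preimage of an ultrafilter under a boolean homomorphism (hence an ultrafilter), and then unfold definitions using Lemma~\ref{lem:sub tech}\ref{lem:sub tech1.5} to get $\nu^{\sigma\circ\tau}=(\nu^\sigma)^\tau$. The paper simply cites the preimage fact as well known, while you spell out the verification, but the argument is otherwise identical.
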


\begin{proof}
By Lemma~\ref{lem:sub tech}, $s_\sigma:\A\to\A$ is a boolean homomorphism.
It is well known and easily seen that 
for boolean algebras $\B_1,\B_2$, the preimage of an ultrafilter 
of $\B_2$ under a boolean homomorphism $f:\B_1\to \B_2$ is an ultrafilter of $\B_1$.
So $\nu^\sigma$ is an ultrafilter of $\A$.
By Lemma~\ref{lem:sub tech}\ref{lem:sub tech1.5},
\begin{align*}
\nu^{\sigma\circ\tau}
=\{a\in\A\mid s_{\sigma\circ\tau}a\in\nu\}
&=\{a\in\A\mid s_\sigma(s_\tau a)\in\nu\}\\
&=\{a\in\A\mid s_\tau a\in \nu^\sigma\}
=(\nu^\sigma)^\tau.
\end{align*}
\end{proof}
It follows that $R_{s_\sigma}(\mu,\nu)$ iff 
$\mu\subseteq\nu^\sigma$, iff $\nu^\sigma=\mu$ since both are ultrafilters.

\subsection{Projections of ultrafilters}

\begin{definition}
Let $M = (\A,\G,\B)$ be an algebra-graph system, 
 let  $\mu$ be an ultrafilter of  $\A$, and let $i < n$.
We write $\mu(i)$ for the set
$R_i[\mu]=\{R_i(a) \mid a \in \mu\} \subseteq \B$
--- the `$i$th projection of $\mu$'.
We say that $\mu$ is \emph{$i$-distinguishing} if it 
contains $F_i$. 

\end{definition} 
Clearly, $\mu$ is  \ds i iff it does not contain any of the $d_{jk}$ for
distinct $j,k \in n \setminus \{i\}$.  In this case, $\mu(i)$ turns out to be an ultrafilter
of $\c B$.  The following lemma establishes this and other 
facts about projections of ultrafilters.

\begin{lemma} \label{L:ultraproj}
Let $M = (\A,\G,\B)$ be an algebra-graph system, let $i<n$, and let $\mu,\nu$
be ultrafilters of $\A$. 
\begin{enumerate}
\renewcommand{\theenumi}{(\roman{enumi})}
\renewcommand{\labelenumi}{(\roman{enumi})}

\item\label{ultraproj1} The projection $\mu(i)$ is an ultrafilter of $\B$ if $\mu$ is $i$-distinguishing, and $\B$ (that is, the improper filter on $\B$), otherwise.

\item\label{ultraproj2} If $j<n$ and $d_{ij} \in \mu$, then $\mu(i) = \mu(j)$.

\item\label{ultraproj3} If $i\neq j<n$ and $\beta$ is an ultrafilter of $\B$, 
then  $\alpha=\{a\in\A\mid R_i(a\cdot d_{ij})\in \beta\}$ is the unique
ultrafilter of $\A$ with $F_i,d_{ij}\in\alpha$ and $\alpha(i)=\beta$.

\item\label{ultraproj4}  $\mu \equiv_i \nu$ iff
(a) $d_{jk}\in\mu$ iff $d_{jk}\in\nu$ for all $j,k\in n\setminus\{i\}$,  and (b)  $\mu(i) = \nu(i)$.

\item\label{ultraproj5}  $\equiv_i$ is an equivalence relation on $\A_+$.

\item\label{ultraproj6}
If $\sigma:n\to n$ and $\sigma[n\setminus\{i\}]=n\setminus\{j\}$,
then $\mu^\sigma(i)=\mu(j)$.
\end{enumerate}
\end{lemma}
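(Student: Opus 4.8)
\medskip
The backbone of all six parts is the projection characterisation in~(i), together with the boolean homomorphism $f(a)=R_i(a\cdot d_{ij})$ of Lemma~\ref{L:RSProps}\ref{RSProps2}. Before starting I would harvest, by the generalisation technique of Definition~\ref{D:ags}, a handful of routine $\A$-universal identities that hold in every $\A(\Gamma)$ and hence in $M$: that $R_i$ is normal and additive with $R_i(a)=R_i(a\cdot F_i)$; that $c_id_{jk}=d_{jk}$ whenever $j,k\neq i$, so that $F_i$ is $c_i$-closed; and that for $c_i$-closed $a$ one has $a\cdot F_i=S_i(R_i(a))$, while on each block of the diagonal type restricted to $n\setminus\{i\}$ other than $F_i$, $a$ meets the block in $0$ or in the whole block (each such block being a single $\equiv_i$-class in $\A(\Gamma)$). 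Each is immediate in $\A(\Gamma)$, where $F_i$ is the join of the $i$-distinguishing atoms and $c_i$-closed sets are unions of $\equiv_i$-classes.

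For~(i) I would treat both cases through one characterisation. If $\mu$ is $i$-distinguishing I claim $\mu(i)=\{B\in\B:S_i(B)\in\mu\}$: ``$\subseteq$'' uses $S_i(R_i(a\cdot F_i))\geq a\cdot F_i\in\mu$ together with $R_i(a)=R_i(a\cdot F_i)$ (Lemma~\ref{L:RSProps}\ref{RSProps1}), and ``$\supseteq$'' uses $R_i(S_i(B))=B$ (Lemma~\ref{L:RSProps}\ref{RSProps3}). Since $S_i$ is a boolean homomorphism from $\B$ into the relativisation $\A\restr F_i$ and $F_i\in\mu$ makes $\mu\cap(\A\restr F_i)$ an ultrafilter there, the set $\{B:S_i(B)\in\mu\}$ is the $S_i$-preimage of an ultrafilter, hence an ultrafilter of $\B$. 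If $\mu$ is not $i$-distinguishing, then $-F_i\in\mu$, so $0=R_i(-F_i)\in\mu(i)$, and for any $B$ the element $S_i(B)+(-F_i)\geq -F_i$ lies in $\mu$ with $R_i$-image $B$ (by additivity); hence $\mu(i)=\B$.

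Parts~(ii),(iii) follow quickly. For~(ii), $F_i\cdot d_{ij}\leq F_j$ and $F_j\cdot d_{ij}\leq F_i$ (Lemma~\ref{L:FiFj}, using $d_{ij}=d_{ji}$ in the cylindric algebra $\A$ of Lemma~\ref{lem:A CA}) show $\mu$ is $i$-distinguishing iff it is $j$-distinguishing; in the negative case both projections equal $\B$, and in the positive case both are proper ultrafilters, so it suffices to check $\mu(i)\subseteq\mu(j)$ via the generalisation identity $S_i(B)\cdot d_{ij}=S_j(B)\cdot d_{ij}$. For~(iii), $\alpha=f^{-1}[\beta]$ is an ultrafilter; $f(F_i)=f(d_{ij})=R_i(F_i\cdot d_{ij})=1\in\beta$ gives $F_i,d_{ij}\in\alpha$; and since $f(S_i(B))=R_i(S_i(B)\cdot d_{ij})=B$ (generalisation, via the uniqueness $(\ddag)$ in Lemma~\ref{L:RSProps}), the characterisation from~(i) gives $\alpha(i)=\beta$. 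For uniqueness I reduce any $a$ to the block $F_i\cdot d_{ij}\in\alpha'$ and use that $f$ is two-valued there: $b\leq F_i\cdot d_{ij}$ and $F_i\cdot d_{ij}-b$ have complementary $R_i$-images in $\B$, forcing $b\in\alpha'\iff R_i(b)\in\beta\iff b\in\alpha$.

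The engine for~(iv) and~(v) is the subalgebra $N_i=\{a:c_ia=a\}$ of $c_i$-closed elements. As $\A$ is a cylindric algebra, $c_i$ is an S5 operator, so $N_i$ is a boolean subalgebra and the standard computation gives $\mu\equiv_i\nu\iff\mu\cap N_i=\nu\cap N_i$; this is visibly an equivalence relation, which is~(v). The forward direction of~(iv) is then easy: the $d_{jk}$ ($j,k\neq i$), the elements $S_i(B)$ (Lemma~\ref{L:RSProps}\ref{RSProps4}) and $F_i$ all lie in $N_i$, so agreement on $N_i$ yields~(a), yields $F_i\in\mu\iff F_i\in\nu$, and then~(i) yields~(b). \textbf{The converse of~(iv) is the main obstacle.} Given $a\in N_i\cap\mu$ I would decompose $a$ over the finitely many $c_i$-closed blocks cut out by the diagonals $d_{jk}$ ($j,k\neq i$), the $i$-distinguishing block being $F_i$: by the harvested identities $a\cdot F_i=S_i(R_i(a))$, while $a$ meets each other block in $0$ or the whole block. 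As $\mu$ is an ultrafilter, exactly one summand lies in $\mu$; if it is a non-$F_i$ block $e\leq a$, then~(a) moves $e$ into $\nu$ and $a\in\nu$, while if it is $a\cdot F_i=S_i(R_i(a))$, then $F_i\in\mu$, part~(i) gives $R_i(a)\in\mu(i)=\nu(i)$, forcing $F_i\in\nu$ and $S_i(R_i(a))\in\nu$, so $a\in\nu$; symmetry completes $\mu\cap N_i=\nu\cap N_i$. Part~(vi) is independent: recording $s_\sigma F_i=F_j$ (since $\sigma$ restricts to a bijection of $n\setminus\{i\}$ onto $n\setminus\{j\}$ and $s_\sigma d_{kl}=d_{\sigma(k)\sigma(l)}$) shows $\mu^\sigma$ is $i$-distinguishing iff $\mu$ is $j$-distinguishing; then Lemma~\ref{lem:sub tech}\ref{lem:sub tech4} gives $R_j(s_\sigma a)\leq R_i(a)$, whence $\mu^\sigma(i)\subseteq\mu(j)$, an equality because both sides are proper ultrafilters (or both equal $\B$).
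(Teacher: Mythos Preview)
Your argument is correct throughout. Parts (i), (iii), and (vi) track the paper's proof closely; in (ii) you use $S_i(B)\cdot d_{ij}=S_j(B)\cdot d_{ij}$ where the paper uses $R_i(b)=R_j(b)$ for $b\leq d_{ij}$ via Lemma~\ref{L:RSProps}\ref{RSProps0}, and your uniqueness argument in (iii) is a little more elaborate than the paper's one-line observation $\alpha'\subseteq\alpha$, but these are cosmetic.

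The substantive difference is in (iv) and (v). The paper handles the converse of (iv) directly: it fixes the single diagonal block $D\in\mu\cap\nu$ determined by condition~(a), records the $\A$-universal sentence
\[
\forall a,b:\A\;\big(0<a\leq D\wedge R_i(b)\leq R_i(a)\;\to\;b\cdot D\leq c_ia\big),
\]
and applies it with $a\cdot D\in\mu$ and a witness $b\in\nu$ for $R_i(a\cdot D)\in\nu(i)$ to conclude $c_ia\in\nu$; part (v) is then read off from (iv). You instead pass through the $c_i$-closed subalgebra $N_i=\{a:c_ia=a\}$: since $c_i$ is an S5 operator in the cylindric algebra $\A$, the equivalence $\mu\equiv_i\nu\iff\mu\cap N_i=\nu\cap N_i$ is standard and yields (v) immediately, and your harvested identities $a\cdot F_i=S_i(R_i(a))$ and $a\cdot D\in\{0,D\}$ for each non-$F_i$ block $D$ (each such $D$ being a single $\equiv_i$-class in $\A(\Gamma)$) reduce the converse of (iv) to a clean two-case split on which block lies in $\mu$. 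Your route is more structural and delivers (v) as a by-product; the paper's is marginally shorter because one tailored sentence handles all blocks uniformly without splitting into the $F_i$ and non-$F_i$ cases.
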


\begin{proof}
\ref{ultraproj1} 
If $F_i\in\mu$, then $\mu(i)=\{B\in\B\mid S_i(B)\in\mu\}$.
For, if $S_i(B)\in\mu$ then by Lemma~\ref{L:RSProps}\ref{RSProps3}, $B=R_i(S_i(B))\in\mu(i)$.
Conversely, if $B\in\mu(i)$ then $B=R_i(a)$ for some $a\in\mu$ with $a\leq F_i$
(since $F_i\in\mu$).
By Lemma~\ref{L:RSProps}\ref{RSProps1}, $S_i(B)=S_i(R_i(a))\geq a$ so $S_i(B)\in\mu$.
Let $\A_i$ be the relativisation of the boolean reduct of
$\A$ to $F_i$. It is easily seen by the generalisation technique
that $S_i:\B\to \A_i$ is a boolean homomorphism.
Now $F_i\in\mu$, so $\mu\cap \A_i$ is an ultrafilter of $\A_i$.
So its preimage under $S_i$, namely $\mu(i)$, is an ultrafilter of~$\B$.

If  $-F_i\in\mu$, then for any $B\in\B$
we have $-F_i+S_i(B)\in\mu$.
By the generalisation technique, $ R_i(-F_i+a)=R_i(a)$ for all $a$, so by Lemma~\ref{L:RSProps}\ref{RSProps3},
 $R_i(-F_i+S_i(B))=R_i(S_i(B))=B$. So $B\in\mu(i)$.
 Since $B$ was arbitrary, $\mu(i)=\B$.

\medskip

\ref{ultraproj2} This is obvious if $i = j$, so suppose $i \neq j$. Assume $d_{ij} \in \mu$.
Let $R_i(a)$ be an element of
$\mu(i)$ for some $a \in \mu$. Define
$b = a\cdot d_{ij} \in \mu$. 
It follows from Lemma~\ref{L:RSProps}\ref{RSProps-1},\ref{RSProps0} 
that %
$R_i(a) \geq R_i(b) = R_j(b) \in \mu(j)$.
By \ref{ultraproj1}, $\mu(j)$ is always a filter, so $R_i(a)\in\mu(j)$.
Thus $\mu(i) \subseteq \mu(j)$.  The converse inclusion holds by symmetry, so
$\mu(i) = \mu(j)$.

\medskip
\ref{ultraproj3}
By Lemma~\ref{L:RSProps}\ref{RSProps2},
the map $a\mapsto R_i(a\cdot d_{ij})$ is a boolean homomorphism from $\A$ to $\B$.
As $\alpha$ is the preimage of $\beta$ under this map, it
is an ultrafilter of~$\A$.
The lemma also shows that 
 $R_i(F_i\cdot d_{ij})=1$,
so $F_i\cdot d_{ij}\in\alpha$.
Plainly, $\alpha(i)\subseteq\beta$, so 
as $\beta$ is an ultrafilter of $\B$, by \ref{ultraproj1} we have $\alpha(i)=\beta$.

Let $\alpha'$ be any ultrafilter of $\A$ with $F_i,d_{ij}\in\alpha'$ and $\alpha'(i)=\beta$.
If $a\in\alpha'$, then $a\cdot d_{ij}\in\alpha'$,
so $R_i(a\cdot d_{ij})\in\beta$. Hence,  
$a\in\{a\in\A\mid R_i(a\cdot d_{ij})\in\beta\}=\alpha$.
So $\alpha'\subseteq\alpha$, and since both sides are ultrafilters of $\A$, they are equal. 

\medskip

\ref{ultraproj4} (${\implies}$) Assume $\mu \equiv_i \nu$. 
For each $j,k\neq i$, we have
$d_{jk}\in\mu\Rightarrow d_{jk}=c_id_{jk}\in\nu$,
and 
$-d_{jk}\in\mu\Rightarrow -d_{jk}=c_i{-}d_{jk}\in\nu$
(these equations are easily established 
by the generalisation technique or using basic properties of cylindric algebras:
see, e.g., \cite[1.3.3, 1.2.12]{Henkin71}).
As $\mu$ and $\nu$ are ultrafilters, this proves (a).
Hence also, $F_i\in\mu$ iff $F_i\in\nu$.

We prove (b).  
If $-F_i\in\mu$, part~\ref{ultraproj1} gives $\mu(i) =\B= \nu(i)$, proving (b).
Assume then that  $F_i\in\mu$. Then  $\mu(i)$ and $ \nu(i)$
 are ultrafilters by part~\ref{ultraproj1}, so it is enough to show $\mu(i) \subseteq \nu(i)$.
Let $B\in\mu(i)$ be arbitrary.
Take $a\in\mu$  such that
$B=R_i(a)$. By assumption, $c_i a \in \nu$. 
Note that the following holds for
all algebras from graphs:
\[
\forall a : \A (R_i(a) = R_i(c_i a)).
\]
So $B=R_i(a) = R_i(c_i a) \in \nu(i)$ as required. 

(${\impliedby}$) For the converse, assume the hypotheses and
let 
\[
D=\prod_{{j,k\in n\setminus\{ i\}},\;{d_{jk}\in\mu}}d_{jk}
\cdot\prod_{{j,k\in n\setminus\{ i\}},\;{d_{jk}\notin\mu}}-d_{jk},
\]
so $D\in\mu\cap\nu$ by (a).
Now the following holds in algebras from graphs:
\[
\forall a,b:\A(0<a\leq D\wedge R_i(b)\leq R_i(a)\to b\cdot D\leq c_ia).
\]
For let $(K,\osim)\in b\cdot D$.
If $K(i)$ is defined, then $K(i)\in R_i(b)\leq R_i(a)$, so we may pick $(K',\osim')\in a$
with $K'(i)=K(i)$.
If $K(i)$ is undefined, let $(K',\osim')\in a$ be arbitrary (we use $a>0$ here).
Since $(K,\osim),(K',\osim')\in D$, we have $\osim_i=\osim'_i$,
hence in the second case $K'(i)$ is also undefined and $K'(i)=K(i)$.
So $(K,\osim)\equiv_i(K',\osim')$, yielding $(K,\osim)\in c_ia$.

By the generalisation technique, the statement holds for $M$.
So if $a\in\mu$,
then $a\cdot D\in\mu$
and $R_i(a\cdot D)\in\mu(i)=\nu(i)$,
so $R_i(a\cdot D)=R_i(b)$ for some $b\in\nu$.
By the above, $b\cdot D\leq c_i(a\cdot D)\leq c_ia$, and as $b\cdot D\in\nu$,
we have $c_ia\in\nu$ as well.  So $\mu\equiv_i\nu$ by definition.

\medskip

\ref{ultraproj5} Immediate from \ref{ultraproj4}.

\medskip

\ref{ultraproj6} Let $B\in\mu^\sigma(i)$.
Then $B=R_i(a)$ for some $a\in\mu^\sigma$, so
$s_\sigma a\in\mu$ and $R_j(s_\sigma a)\in\mu(j)$.
By Lemma~\ref{lem:sub tech}\ref{lem:sub tech4},
which applies since $\sigma[n\setminus\{i\}]=n\setminus\{j\}$,
we have $R_j(s_\sigma a)\leq R_i(a)=B$.
As $\mu(j)$ is a filter,
 $B\in\mu(j)$ as well.
As $B$ was arbitrary, $\mu^\sigma(i)\subseteq\mu(j)$.

So by part~\ref{ultraproj1}, it only remains to show that
if $\mu^\sigma(i)$ is an ultrafilter of $\B$ then so is $\mu(j)$.
But as $\sigma[n\setminus\{i\}]=n\setminus\{j\}$, the definition
of $F_i$ and Lemma~\ref{lem:sub tech}\ref{lem:sub tech1}\ref{lem:sub tech2} 
yield 
\[s_\sigma F_i
=s_\sigma\left(\prod_{k,l\in n\setminus\{i\},\,k\neq l}-d_{kl}\right)
=\hskip-3pt\prod_{k,l\in n\setminus\{i\},\,k\neq l}\hskip-6pt-d_{\sigma(k)\sigma(l)}
=\hskip-3pt\prod_{k,l\in n\setminus\{j\},\,k\neq l}\hskip-6pt-d_{kl}
=F_j.
\]
So $F_i\in\mu^\sigma$ iff  $s_\sigma F_i\in \mu$, iff $F_j\in\mu$.
By part~\ref{ultraproj1}, 
$\mu^\sigma(i)$ is an ultrafilter of $\B$ iff $\mu(j)$ is.
\end{proof}

\section{Networks and patch systems}\label{sec:nks}

In this section we introduce approximations to representations,  called
ultrafilter networks. They will be part of the game to construct
representations.
We will approximate the networks themselves by lower-dimensional objects that we call
patch systems.

\subsection{Ultrafilter networks}

\begin{definition}
\index{i-distinguishing}
Let $X$ be a set, $i < n$, and $v \in {}^nX$.
\begin{enumerate}
\item For $w \in {}^nX$, we say $v \equiv_i w$ if $v_j = w_j$ for all $j< n$,
$j \neq i$.
\item If $v_j \neq v_k$ for all distinct $j, k \in n \setminus \{i\}$, then
$v$ is called $i$-\emph{distinguishing}.
\end{enumerate}
\end{definition}

\begin{definition}\label{def:uf nk}
\index{partial ultrafilter network}
\index{ultrafilter network}
Let $M = (\A,\G,\B)$ be an algebra-graph system.
A \emph{cylindric ultrafilter network} over $\A$ is a pair $\N = (N_1, N_2)$,
where $N_1$ is a set and $N_2 : {}^nN_1 \to \A_+$ is a  map that satisfies
the following for any $v, w \in  {}^nN_1$:
\begin{enumerate}
\item For $i,j < n$, we have $d_{ij} \in N_2(v)$ if and only if $v_i = v_j$.
\item If $i < n$ and $v \equiv_i w$, then $N_2(v) \equiv_i N_2(w)$.
\end{enumerate}
$\c N$ is said to be a \emph{polyadic ultrafilter network} if in addition:
\begin{enumerate}
\setcounter{enumi}{2}
\item For each $\sigma:n\to n$  we have
$N_2(v\circ\sigma)=N_2(v)^\sigma$.
\end{enumerate}

If $\N = (N_1, N_2)$ and $\M = (M_1, M_2)$ are  ultrafilter networks,
we write $\N \subseteq \M$ to denote $N_1 \subseteq M_1$ and
$M_2\restriction{}^nN_1=N_2$. 
For a chain $\N^0\subseteq\N^1\subseteq\cdots$ of ultrafilter networks $\N^k=(N^k_1,N^k_2)$,
we write $\bigcup_{k<\omega}\N_k$ for the ultrafilter network
$(\bigcup_{k<\omega}N^k_1,\bigcup_{k<\omega}N^k_2)$
(here we view the maps $N^k_2$ formally as sets of ordered pairs).
We will often write $\N$ for both $N_1$ and $N_2$.
\end{definition}

\subsection{Patch systems}

Patch systems provide a way to assign ultrafilters on a graph to
$(n-1)$-sized subsets, or `patches', of a set of nodes.

\begin{definition}
\index{patch system}
Let $M = (\A,\G,\B)$ be an algebra-graph system. A \emph{patch system} for
$\B$ is a pair $\P = (P_1, P_2)$, where $P_1$ is a set and
$P_2 : [P_1]^{n-1} \to \B_+$
assigns an ultrafilter of $\B$ to each subset of $P_1$ of size $n-1$. 
(If $|P_1| < n-1$, then $P_2 = \emptyset$.)
A set $V = \{v_0, \dots, v_{n-1}\} \in [P_1]^n$ 
is said to be $\P$-\emph{coherent} if the following is satisfied: For
any $B_i \in P_2(V \setminus \{v_i\})$ ($i < n$), there are $p_i \in \G$
with $p_i \in B_i$ for each $i < n$, such that $\{p_0, \dots, p_{n-1}\}$ is not
an independent subset of $\G$.
The patch system $\P$ is said to be \emph{coherent} if every set
$V \subseteq P_1$ of size $n$ is $\P$-coherent.
\end{definition}

\begin{lemma} \label{L:coherent}
Let $M = (\A,\G,\B)$ be an algebra-graph system and $\P = (P_1, P_2)$ a
patch system for $\B$. Let $V = \{v_0, \dots, v_{n-1}\} \in [P_1]^n$ and for
each $i<n$, let $V_i = V \setminus \{v_i\}$. Then $V$ is $\P$-coherent if and
only if there exists an ultrafilter $\mu$ of $\A$ that is $i$-distinguishing
and with $\mu(i) = P_2(V_i)$ for each $i < n$.
\end{lemma}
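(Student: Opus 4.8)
The plan is to prove both directions of the equivalence, with the real content lying in the backward direction, where I must construct a single ultrafilter $\mu$ of $\A$ from the data of the $n$ ultrafilters $P_2(V_i)$ of $\B$ together with the coherence of $V$.

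First I would treat the forward direction. Suppose such an ultrafilter $\mu$ exists, so $\mu$ is $i$-distinguishing with $\mu(i)=P_2(V_i)$ for each $i<n$. Given $B_i\in P_2(V_i)=\mu(i)$, the definition of the projection gives $a_i\in\mu$ with $B_i=R_i(a_i)$. Setting $a=\prod_{i<n}a_i\in\mu$, I would use that $\mu$ is $i$-distinguishing (so $F_i\in\mu$) to arrange $a\leq F_i$ for each relevant restriction and that $a>0$, since $\mu$ is a proper filter. The point is that $a$ is a nonzero element lying below each $F_i$ with $R_i(a)\leq B_i$. Translating this back into the graph semantics via the generalisation technique, a nonzero element below every $F_i$ corresponds (in $M(\Gamma)$, hence in $M$ by an $\A$-universal sentence) to the existence of a single atom $(K,\osim)$ that is $i$-distinguishing for all $i$, i.e.\ $\osim$ is the equality relation and $\im K$ is not independent; the values $K(i)$ then supply the required nodes $p_i\in B_i$ witnessing non-independence, so $V$ is $\P$-coherent.

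For the backward direction I would assume $V$ is $\P$-coherent and construct $\mu$. The idea is to take the $\A$-elements whose $R_i$-images are forced to lie in the prescribed ultrafilters, and show that coherence guarantees this collection extends to a genuine ultrafilter that is $i$-distinguishing with the correct projections. Concretely, I would consider the filter of $\A$ generated by $\{F_i\mid i<n\}$ together with all $S_i(B)$ for $B\in P_2(V_i)$ (using Lemma~\ref{L:RSProps}\ref{RSProps3} and Lemma~\ref{L:ultraproj}\ref{ultraproj1} to control how $R_i$ and $S_i$ interact with projections), and show this filter is proper precisely because $V$ is $\P$-coherent. The key computation reduces, via the generalisation technique, to a statement about atoms $(K,\osim)\in S(\Gamma)$: a finite meet of the generating elements is nonzero iff one can find an equality-class atom whose $K$-values $p_i=K(i)$ lie in the respective $B_i$ and form a non-independent set --- which is exactly the coherence condition. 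Having shown properness, I would extend to an ultrafilter $\mu$ and verify $F_i\in\mu$ (so $\mu$ is $i$-distinguishing) and $\mu(i)=P_2(V_i)$, the latter using the ultrafilter characterisations in Lemma~\ref{L:ultraproj}\ref{ultraproj1},\ref{ultraproj3}.

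The main obstacle is the backward construction: ensuring the generated filter is proper and that its projections come out exactly equal (not merely contained in) the given ultrafilters $P_2(V_i)$. Coherence is only a statement about single finite configurations of nodes, so I expect to need a compactness or finite-intersection argument to pass from the coherence of $V$ to the simultaneous satisfiability of all the constraints defining $\mu$; the cleanest route is probably to show that every finite subset of the generating set has nonzero meet by exhibiting a suitable atom in some $M(\Gamma)$ and invoking the generalisation technique, then applying the boolean prime filter theorem. Keeping careful track of which $d_{jk}$ are excluded (to secure $i$-distinguishingness for every $i$ simultaneously, forcing $\osim$ to be equality) will be the fiddly part, but Lemma~\ref{L:ultraproj}\ref{ultraproj3} already handles the single-index case and should generalise.
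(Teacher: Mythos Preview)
Your proposal is correct and follows essentially the same route as the paper: in one direction, take a finite meet inside $\mu$ below all $F_i$ and use an $\A$-universal sentence to extract witnesses $p_i$; in the other, generate a filter from $\{S_i(B)\mid B\in P_2(V_i)\}$, use coherence (via the generalisation technique) to get the finite intersection property, and extend by the boolean prime ideal theorem. Two small remarks: you have the direction labels reversed relative to the paper's reading of the biconditional (the construction of $\mu$ is the $(\Rightarrow)$ direction there), and your closing appeal to Lemma~\ref{L:ultraproj}\ref{ultraproj3} is slightly off, since that lemma concerns a situation with a diagonal $d_{ij}$ and does not generalise to all $i$ simultaneously --- the equality $\mu(i)=P_2(V_i)$ follows directly from $S_i(B)\in\mu$, Lemma~\ref{L:RSProps}\ref{RSProps3}, and Lemma~\ref{L:ultraproj}\ref{ultraproj1}, which you already cite.
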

\begin{proof}

$(\implies)$ Assume $V$ is $\P$-coherent. 
Define
\[
\mu_0 = \{S_i(B) \mid i<n,\;B \in P_2(V_i)\} \subseteq \A.
\]
To show that $\mu_0$ has the finite intersection property, it is sufficient to
consider arbitrary $B_i \in P_2(V_i)$ and prove that
$S_0(B_0) \cdot S_1(B_1) \cdots S_{n-1}(B_{n-1}) \neq 0$. By the $\P$-coherence of $V$, we can find
$p_i \in B_i$ for each $i < n$ such that $\{p_0, \dots, p_{n-1}\}$ is not an independent set. Now the following $\A$-universal sentence
holds in structures $M(\Gamma)$, because there is an atom 
$(K,\osim)$ that is
$i$-distinguishing and such that $K(i)=p_i$, for each $i<n$:
\begin{align*}
\forall B_0 \dots B_{n-1} \colon \B \bigg(\Big[ \exists p_0 \dots p_{n-1} \colon \G
                                   \Big({\bigwedge_{i<n} p_i \in B_i}
                                    \land \bigvee_{i<j < n}& E(p_i, p_j)\Big)\Big] 
                                    \\
                              & {} \rightarrow {\prod_{i<n}S_i(B_i)>0}\bigg).
\end{align*}
We showed that the left hand side of the implication is satisfied, so the right
hand side gives us that $\mu_0$ has the
finite intersection property. 
By the boolean prime ideal theorem, 
$\mu_0$ extends to an ultrafilter $\mu$ of $\A$. Since 
plainly $F_i = S_i(1^\B) \in \mu$, we have that
$\mu$ is $i$-distinguishing for all $i<n$. Moreover, if $B \in P_2(V_i)$, then
$S_i(B) \in\mu_0\subseteq \mu$, so by Lemma~\ref{L:RSProps}\ref{RSProps3},
$B = R_i(S_i(B)) \in \mu(i)$.
Therefore $P_2(V_i)  = \mu(i)$ by Lemma~\ref{L:ultraproj}\ref{ultraproj1},
since both sides are ultrafilters of $\B$.

\medskip

$({\impliedby})$ Assume $\mu$ is an ultrafilter of $\A$ that is $i$-distinguishing
for all $i < n$ and with $\mu(i) = P_2(V_i)$ for each $i < n$. Choose arbitrary
$B_i \in P_2(V_i)$ for each $i < n$. For each $i < n$, we can choose $b_i \in \mu$
such that $R_i(b_i) = B_i$. Let $b = \prod_{i<n} (b_i \cdot F_i) \in \mu$.
Now the following $\A$-universal sentence holds by definition in algebras from graphs,
because we can take $(K,\osim)\in x$, and then $\im K$ is not independent
and $K(i)\in R_i(x)$ for each $i$:
\[
\forall x \colon \A \bigg({0<x\leq\prod_{i<n} F_i} \rightarrow
                \exists p_0\ldots p_{n-1} \colon\G\Big({\bigwedge_{i<n}p_i\in R_i(x)}
                                 \land {\bigvee_{i<j<n}E(p_i,p_j)}\Big)\bigg).
\]
So we
can  choose $p_0, \dots, p_{n-1}$ with $p_i \in R_i(b)\subseteq R_i(b_i)=B_i$ and such that
$\{p_0, \dots, p_{n-1}\}$ is not independent. We conclude that $V$ is $\P$-coherent.
\end{proof}

\subsection{Patch systems from cylindric networks}
Here we show how to construct a coherent patch system
from a cylindric ultrafilter network.
We will need the following lemma to show that it is well defined.
We adopt the standard notation that
if $i,j<n$ then $[i/j]:n\to n$
denotes the function given by $[i/j](i)=j$ and $[i/j](k)=k$ for $k\neq i$.

\begin{lemma} \label{L:patchwd}
Let $M = (\A,\G,\B)$ be an algebra-graph system and $\N = (N_1, N_2)$ a
cylindric ultrafilter network over $\A$. 
Let $i,j<n$ and $v,w\in{}^nN_1$.
Then:
\begin{enumerate}
 \renewcommand{\theenumi}{(\roman{enumi})}
\renewcommand{\labelenumi}{(\roman{enumi})}

\item $N_2(v)$ is $i$-distinguishing
  if and only if $v$ is $i$-distinguishing.
  
  \item If $v$ is \ds i then $v\circ[i/j]$ is \ds j.
  
\item 
  $N_2(v)(i) = N_2(v\circ [i/j])(j)$.

\item If
  $\{v_k \mid i \neq k < n\} = \{w_k \mid j \neq k < n\}$ then
  $N_2(v)(i) = N_2(w)(j)$.

\end{enumerate}
\end{lemma}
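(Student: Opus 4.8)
The plan is to read (i) and (ii) straight off the definitions, to obtain (iii) by combining the two network axioms with Lemma~\ref{L:ultraproj}, and to concentrate the real effort on (iv). For (i): an ultrafilter of $\A$ is $i$-distinguishing exactly when it contains none of the $d_{jk}$ with $j,k\in n\setminus\{i\}$ distinct. By property~1 of a cylindric ultrafilter network, $d_{jk}\in N_2(v)$ iff $v_j=v_k$, so $N_2(v)$ is $i$-distinguishing iff $v_j\neq v_k$ for all such $j,k$, which is exactly the condition that $v$ be $i$-distinguishing. For (ii) I may assume $i\neq j$ (otherwise $[i/j]$ is the identity and there is nothing to prove); the values of $v\circ[i/j]$ at the coordinates in $n\setminus\{j\}$ are $v_j$ (at coordinate $i$) and $v_k$ (at each $k\in n\setminus\{i,j\}$), whose index set is $n\setminus\{i\}$, on which $v$ is distinguishing, so these values are pairwise distinct and $v\circ[i/j]$ is $j$-distinguishing.

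For (iii), assume $i\neq j$ and put $u=v\circ[i/j]$. Since $u$ and $v$ agree off coordinate $i$ we have $u\equiv_i v$, so property~2 gives $N_2(u)\equiv_i N_2(v)$ and hence $N_2(u)(i)=N_2(v)(i)$ by clause (b) of Lemma~\ref{L:ultraproj}\ref{ultraproj4}. On the other hand $u_i=u_j=v_j$, so $d_{ij}\in N_2(u)$ by property~1, and Lemma~\ref{L:ultraproj}\ref{ultraproj2} gives $N_2(u)(i)=N_2(u)(j)$. Combining, $N_2(v)(i)=N_2(u)(j)=N_2(v\circ[i/j])(j)$.

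Part (iv) is the substance. If $v$ is not $i$-distinguishing, then $\{v_k\mid k\neq i\}$ has fewer than $n-1$ elements, so by hypothesis $w$ is not $j$-distinguishing either; part (i) and Lemma~\ref{L:ultraproj}\ref{ultraproj1} then make both $N_2(v)(i)$ and $N_2(w)(j)$ equal to the improper filter $\B$, and we are done. So assume both tuples distinguishing and write $P=\{v_k\mid k\neq i\}=\{w_k\mid k\neq j\}$, of size $n-1$. By (iii) applied to $w$, the tuple $w\circ[j/i]$ is $i$-distinguishing (by (ii)), satisfies $N_2(w)(j)=N_2(w\circ[j/i])(i)$, and has patch $P$ at coordinate $i$. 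Hence everything reduces to an \emph{arrangement-invariance} statement: any two $i$-distinguishing tuples with the same patch $P$ at index $i$ have the same $i$-projection. The value carried at coordinate $i$ is irrelevant, by property~2 with clause (b) of Lemma~\ref{L:ultraproj}\ref{ultraproj4}, and two such tuples otherwise differ only by a permutation of the values of $P$ among the coordinates $n\setminus\{i\}$; so it suffices to prove invariance under a single transposition of the values at two coordinates $k,l\in n\setminus\{i\}$.

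I expect this transposition-invariance to be the main obstacle, since a cylindric (as opposed to polyadic) ultrafilter network carries no built-in substitution symmetry: it must be manufactured from the only two moves available, namely (a) freely changing the value at the currently distinguished coordinate (property~2 with clause (b) of Lemma~\ref{L:ultraproj}\ref{ultraproj4}) and (b) transferring the distinguished role between two coordinates that happen to carry equal values (property~1 with Lemma~\ref{L:ultraproj}\ref{ultraproj2}), each of which leaves the relevant projection unchanged. I would realise the swap of the values at $k$ and $l$ by walking the distinguished index once around the cycle $i\to k\to l\to i$: set coordinate $i$ to $v_k$ and transfer the distinguished role to $k$, set coordinate $k$ to $v_l$ and transfer it to $l$, set coordinate $l$ to $v_k$ and transfer it back to $i$, then restore coordinate $i$; the net effect swaps the values at $k$ and $l$ while keeping the projection fixed throughout. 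Since every permutation of $n\setminus\{i\}$ is a product of transpositions, arrangement-invariance follows, and with it part (iv).
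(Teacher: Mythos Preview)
Your proof is correct and follows essentially the same route as the paper's. Parts (i)--(iii) match the paper almost verbatim; for (iv), the paper also reduces to a common distinguished index and then proves arrangement-invariance by the very same cycle walk $i\to k\to l\to i$ (packaged there as three applications of (iii), namely $v'=v\circ[0/k]\circ[k/j]\circ[j/0]$), organising the sequence of transpositions as an induction on the highest index of disagreement rather than appealing to the fact that permutations factor into transpositions --- a purely cosmetic difference.
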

\begin{proof}
(i) We have that $N_2(v) \ni F_i$ if and only if it does not contain $d_{jk}$ for
$j < k < n$ and $j,k \neq i$. But this is true if and only if $v$ is $i$-distinguishing
by the definition of cylindric ultrafilter networks.

\medskip

(ii) This is trivial if $i=j$, so suppose not.
If $k,l\neq j$ and $(v\circ[i/j])_k=(v\circ[i/j])_l$, then
$v_{[i/j](k)}=v_{[i/j](l)}$. Since the indices cannot be $i$,
this implies that ${[i/j](k)}={[i/j](l)}$.  As $k,l\neq j$, this implies that $k=l$
as required.

\medskip

(iii) Write $w = v\circ[i/j]$. Then
$w \equiv_i v$ and $w_i = v_j=w_j$. By the definition of ultrafilter network we
have $N_2(v) \equiv_i N_2(w)$ and $d_{ij} \in N_2(w)$. So by
Lemma~\ref{L:ultraproj}\ref{ultraproj4}  we have $N_2(v)(i) = N_2(w)(i)$, and by \ref{ultraproj2}  of the
same lemma, $N_2(w)(i) = N_2(w)(j)$.

\medskip

(iv) Assume the hypothesis.
Now $v$ is  \ds i  iff $|\{v_k \mid i \neq k < n\}|=n-1$,
and similarly for $w$.
So if $v$ is not \ds i then neither is $w$ \ds j,
and by part (i) and Lemma~\ref{L:ultraproj}\ref{ultraproj1},
$N_2(v)(i)=\B=N_2(w)(j)$ as required.
So assume that $v$ is \ds i, and hence that $w$ is \ds j.
We may suppose without loss of generality that $i = j = 0$ (by (ii,iii), we can just replace
$v$ by $v\circ[i/0]$ and $w$ by $w\circ[j/0]$).

The proof is by induction on the highest number $v, w$ disagree on:
$d(v,w) = \max\{k < n \mid v_k \neq w_k\}$. If they agree on everything or
$d(v,w) = 0$, then $v \equiv_0 w$, so $N_2(v) \equiv_0 N_2(w)$ and
Lemma~\ref{L:ultraproj}\ref{ultraproj4} gives us $N_2(v)(0) = N_2(w)(0)$.

Assume now that $d(v,w) = k > 0$ and the claim holds if $d(v,w)<k$. Since
$\{v_\ell \mid 0 \neq \ell < n\} = \{w_\ell \mid 0 \neq \ell < n\}$, $w_k = v_j$
for some $0 < j < n$. We have $j\neq k$ by definition of $k$.
If $j>k$, then $w_j = v_j = w_k$, contradicting that $w$ is $0$-distinguishing. So $j < k$. Now `swap' the $k$ and $j$
entries of $v$ --- that is, define
\[
v' = v\circ[0/k]\circ[k/j]\circ[j/0].
\]
By (ii), $N_2(v)(0) = N_2(v')(0)$. Also $v_k' = v_j = w_k$, and $v_\ell' = w_\ell=v_\ell$
for all $\ell > k$. So $v'$ is also $0$-distinguishing,
$\{v_\ell' \mid 0 \neq \ell < n\} = \{w_\ell \mid 0 \neq \ell < n\}$
and $d(v',w) < k$. So, using the induction hypothesis, we get
$N_2(v)(0) = N_2(v')(0) = N_2(w)(0)$.
\end{proof}

The third part in the above lemma says that the $i$th projection is
independent from the $i$th coordinate and the order of the elements in the vector.
This allows us to define the following:

\begin{definition}\label{def:DN}
Let $M = (\A,\G,\B)$ be an algebra-graph system and $\N = (N_1, N_2)$ a
cylindric ultrafilter network over $\A$. We define $\partial \N$ to be the patch system
$(N_1, P_2)$, where
\begin{align*}
P_2 : [N_1]^{n-1} &\to \B_+, \\
\{v_0, \dots, v_{i-1}, v_{i+1}, \dots, v_{n-1}\} &\mapsto N_2(v)(i),
\end{align*}
for each $i < n$ and $i$-distinguishing $v \in {}^nN_1$.
\end{definition}

\begin{proposition}\label{prop:DN coh}
Let $M = (\A,\G,\B)$ be an algebra-graph system and $\N=(N_1,N_2)$ a
cylindric ultrafilter network over $\A$. 
Then $\partial \N$ is a well defined and coherent patch system for $\B$.
\end{proposition}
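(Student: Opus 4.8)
The plan is to treat well-definedness and coherence in turn, in each case reducing the claim to a lemma already established. For well-definedness I would first observe that $P_2$ is defined on all of $[N_1]^{n-1}$: any $(n-1)$-element set $W=\{u_0,\dots,u_{n-2}\}\subseteq N_1$ can be written as $\{v_k\mid k\neq n-1\}$ for the tuple $v=(u_0,\dots,u_{n-2},u_0)\in{}^nN_1$, which is $(n-1)$-distinguishing, so $N_2(v)(n-1)$ furnishes a value. The substance of well-definedness is independence of the chosen representation, i.e.\ that $N_2(v)(i)=N_2(w)(j)$ whenever $\{v_k\mid k\neq i\}=\{w_k\mid k\neq j\}$ with $v$ being $i$-distinguishing and $w$ being $j$-distinguishing; but this is precisely Lemma~\ref{L:patchwd}(iv), so the appeal is a single line.

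For coherence I would produce, for each $V\in[N_1]^n$, one ultrafilter witnessing $\partial\N$-coherence through Lemma~\ref{L:coherent}. Enumerate the $n$ distinct elements of $V$ as a tuple $v=(v_0,\dots,v_{n-1})\in{}^nN_1$ and put $\mu=N_2(v)$, an ultrafilter of $\A$. Because the coordinates of $v$ are pairwise distinct, $v$ is $i$-distinguishing for every $i<n$, so by Lemma~\ref{L:patchwd}(i) $\mu$ is $i$-distinguishing for every $i$. Writing $V_i=V\setminus\{v_i\}=\{v_k\mid k\neq i\}$, the definition of $\partial\N$ gives $P_2(V_i)=N_2(v)(i)=\mu(i)$ for each $i<n$. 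Thus $\mu$ is $i$-distinguishing with $\mu(i)=P_2(V_i)$ for all $i$, and the ($\impliedby$) direction of Lemma~\ref{L:coherent} yields that $V$ is $\partial\N$-coherent. As $V$ was arbitrary, $\partial\N$ is coherent.

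I expect no serious obstacle here, because the genuine difficulty has already been discharged in Lemma~\ref{L:patchwd}: all the combinatorics showing that the $i$th projection $N_2(v)(i)$ depends neither on the distinguished coordinate $i$ nor on the ordering of the tuple is contained there. Once that lemma is available, well-definedness is an immediate citation and coherence reduces to exhibiting the obvious candidate $N_2(v)$ and reading off its two required properties. The only point needing a moment's care is the bookkeeping in the coherence step --- confirming that the enumeration of $V$ really is $i$-distinguishing simultaneously for all $i$, which holds because $|V|=n$ forces all $n$ coordinates to be distinct --- rather than anything substantive.
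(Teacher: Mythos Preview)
Your proof is correct and follows essentially the same approach as the paper's, invoking Lemma~\ref{L:patchwd}(iv) for well-definedness and Lemma~\ref{L:coherent} with $\mu=N_2(v)$ for coherence. The paper additionally spells out (via Lemma~\ref{L:patchwd}(i) and Lemma~\ref{L:ultraproj}\ref{ultraproj1}) that $N_2(v)(i)$ is genuinely an ultrafilter of $\B$ rather than the improper filter, so that $P_2$ really lands in $\B_+$ --- a small point you leave implicit.
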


\begin{proof}
Let $\partial \N=(N_1,P_2)$ as above.
By Lemma~\ref{L:patchwd}(iv),
 $P_2(\{v_0, \dots, v_{i-1},\allowbreak v_{i+1}, \dots,\allowbreak v_{n-1}\})\allowbreak=N_2(v)(i)$ is independent of the choice of $v,i$.
 By (i) of the lemma, $N_2(v)$ is \ds i,
 so by Lemma~\ref{L:ultraproj}\ref{ultraproj1}, $N_2(v)(i)\in\B_+$.
 So $\partial \N$ is well defined.
 Let $V=\{v_0,\ldots,v_{n-1}\}\in[N_1]^n$,
$V_i=V\setminus\{v_i\}\in [N_1]^{n-1}$
for $i<n$, and $\mu=N_2(v_0,\ldots,v_{n-1})$. By Lemma~\ref{L:patchwd}(i), $\mu$ is \ds i, 
and by definition of $\partial\N$, $P_2(V_i)=\mu(i)$, for every $i<n$.
By  Lemma~\ref{L:coherent}, $V$ is $\partial \N$-coherent.
As $V$ was arbitrary, $\partial \N$ is coherent.
\end{proof}

\subsection{Polyadic networks from patch systems}
A  patch system contains a lot of the 
information in an ultrafilter network.
Here we show that given a coherent patch system $\P=(P_1,P_2)$, we can always find ultrafilters to assign to
$n$-tuples of $P_1$ respecting $P_2$, and under fairly minimal conditions,
they form a polyadic ultrafilter network.

\begin{lemma}\label{lem:lots of ufs}
Let $M = (\A,\G,\B)$ be an algebra-graph system and  $\c P=(P_1,P_2)$ 
a coherent patch system for $\B$.
Let $v\in {}^nP_1$.
Then there is an ultrafilter $\mu$ of $\A$
such that
\begin{enumerate}
\item For $i,j < n$, we have $d_{ij} \in \mu$ if and only if $v_i = v_j$.

\item  $\mu(i)=P_2(\{v_j\mid j\in n\setminus\{i\}\})$ for each $i<n$ such that $v$ is \ds i.
\end{enumerate}
\end{lemma}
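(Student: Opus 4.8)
The plan is to construct the desired ultrafilter $\mu$ directly from the data of the patch system and the structure $v$, following the pattern established in the forward direction of Lemma~\ref{L:coherent}. First I would record which coordinates $v$ can distinguish: let $I=\{i<n\mid v\text{ is }i\text{-distinguishing}\}$, and for $i\in I$ write $W_i=\{v_j\mid j\in n\setminus\{i\}\}\in[P_1]^{n-1}$, so that $P_2(W_i)$ is an ultrafilter of $\B$ by hypothesis. For a coordinate $i\notin I$, there exist distinct $j,k\in n\setminus\{i\}$ with $v_j=v_k$, and I want $d_{ij}$-type information to be governed by the equality pattern of $v$; this is condition~(1) of the lemma.

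The core construction is to define a candidate generating set
\[
\mu_0=\{S_i(B)\mid i\in I,\;B\in P_2(W_i)\}\cup\{d_{ij}\mid v_i=v_j\}\cup\{-d_{ij}\mid v_i\neq v_j\}\subseteq\A,
\]
and show it has the finite intersection property. The products $S_i(B_i)$ over $i\in I$ are handled exactly as in Lemma~\ref{L:coherent}: the $\P$-coherence of the set $\{v_i\mid i\in I\}$ (or of any size-$n$ subset of $P_1$ containing these values) provides nodes $p_i\in B_i$ spanning a non-independent set, and the same $\A$-universal sentence used there gives $\prod_{i\in I}S_i(B_i)>0$ via the generalisation technique. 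The extra work here, absent from Lemma~\ref{L:coherent}, is to multiply in the diagonal and anti-diagonal constants dictated by $v$'s equality pattern and verify the product is still nonzero. I expect this to follow because the witnessing atom $(K,\osim)$ in $M(\Gamma)$ can be chosen so that its equivalence relation $\osim$ exactly matches the equality pattern of $v$ on $n$ (that is, $i\osim j$ iff $v_i=v_j$), so it lies in the appropriate $d_{ij}$ and $-d_{ij}$ simultaneously; this requires extending the $\A$-universal sentence from Lemma~\ref{L:coherent} to incorporate $d_\osim$, which is exactly the element isolated in the definition preceding Lemma~\ref{lem:sub tech}.

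Once $\mu_0$ has the finite intersection property, the boolean prime ideal theorem extends it to an ultrafilter $\mu$ of $\A$. Condition~(1) then holds because $\mu$ contains $d_{ij}$ precisely when $v_i=v_j$ and $-d_{ij}$ otherwise, by construction, and $\mu$ being an ultrafilter forces these inclusions to be exhaustive. For condition~(2), fix $i\in I$: since $S_i(1^\B)=F_i\in\mu_0\subseteq\mu$, the ultrafilter $\mu$ is $i$-distinguishing, and for any $B\in P_2(W_i)$ we have $S_i(B)\in\mu$, so $B=R_i(S_i(B))\in\mu(i)$ by Lemma~\ref{L:RSProps}\ref{RSProps3}; hence $P_2(W_i)\subseteq\mu(i)$, and equality follows from Lemma~\ref{L:ultraproj}\ref{ultraproj1} since both are ultrafilters of $\B$.

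The main obstacle I anticipate is the finite intersection property argument with the diagonal constants folded in: one must confirm that a single witnessing atom in some $M(\Gamma)$ can realise both the non-independence requirement (for the $S_i(B_i)$ factors) and the prescribed equality pattern of $v$ (for the $d_{ij}$ factors) at once. The key observation making this work is that $\P$-coherence only constrains the behaviour on the $i$-distinguishing coordinates, while the equality pattern among the remaining coordinates is freely specifiable in the atom $(K,\osim)$ by choosing $\osim$ appropriately; verifying these two demands are compatible, and phrasing the combined requirement as a single $\A$-universal sentence true in every $M(\Gamma)$, is where the care is needed.
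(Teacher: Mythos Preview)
Your unified construction via a single generating set $\mu_0$ is sound and leads to a correct proof, but it differs in organisation from the paper's argument. The paper splits on $|\im(v)|$: when $|\im(v)|=n$ it invokes Lemma~\ref{L:coherent} directly; when $|\im(v)|=n-1$ it uses the uniqueness part of Lemma~\ref{L:ultraproj}\ref{ultraproj3} to write $\mu$ down explicitly as $\{a\mid R_i(a\cdot d_{ij})\in P_2(\im v)\}$; and when $|\im(v)|<n-1$ it observes that $d_\osim$ is already an atom and takes the principal ultrafilter it generates. Each case then needs no further finite-intersection argument, since the required properties are read off from lemmas already in hand.

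Your route is viable, but note that your description of the FIP step leans on $\P$-coherence in a way that only applies to the one-one case. When $|\im(v)|=n-1$ (so $I=\{i,j\}$ with $v_i=v_j$), the set $\{v_k\mid k\in I\}$ is a singleton, there is no size-$n$ subset of $\im(v)$ to which coherence applies, and no non-independence requirement is in play: the witnessing atom $(K,\osim)$ has $|n/\osim|=n-1$, so $K$ is defined only at $i,j$ with $K(i)=K(j)$, and you just need a single point $p\in B_i\cdot B_j$, available because $W_i=W_j$ forces $B_i,B_j$ into the same ultrafilter of $\B$. When $|\im(v)|<n-1$ the $S_i$-part of $\mu_0$ is empty and FIP reduces to $d_\osim\neq 0$. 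So your verification still decomposes into the same three cases; the paper's presentation just makes that decomposition explicit up front and thereby avoids redoing work already packaged in Lemmas~\ref{L:coherent} and~\ref{L:ultraproj}\ref{ultraproj3}.
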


\begin{proof}
There are three cases.
\begin{itemize}
\item[(a)] If $|{\set(v)}| = n$, then by Lemma~\ref{L:coherent} there is an ultrafilter
  $\mu$ of $\A$ that is \ds i and with
  $\mu(i) = P_2(\{v_j \mid i \neq j < n\})$, for all $i < n$.

\item[(b)] If $|{\set(v)}|= n-1$, there are unique $i < j < n$ such that $v_i = v_j$,
and $v$ is \ds k iff $k\in\{i,j\}$.
By Lemma~\ref{L:ultraproj}\ref{ultraproj3},
  \[
\mu = \{a \in \A \mid R_i(a \cdot d_{ij}) \in P_2(\set(v))\} 
  \]
is an ultrafilter of $\A$
with $F_i,d_{ij}\in\mu$ (and hence $F_j\in\mu$ by Lemma~\ref{L:FiFj}),
so for each $k,l<n$ we have $d_{kl}\in\mu$ iff $v_k=v_l$.
Also,  $\mu(i)=P_2(\set v)$.
By Lemma~\ref{L:ultraproj}\ref{ultraproj2}, $\mu(j)=P_2(\set v)$ as well.

\item[(c)] If $|{\set(v)}| < n-1$, define
$
  D = \prod_{i < j < n, v_i = v_j} d_{ij} \cdot \prod_{i < j < n, v_i \neq v_j} -d_{ij}.
$
  By the generalisation technique, $D$ is an atom of $\A$ (in an algebra from a graph
  it would just be $(\emptyset, {\sim})$ where $i \sim j$ if and only if
  $v_i = v_j$). We define $\mu$ to be the principal ultrafilter of $\A$
  generated by~$D$. Condition~2 holds vacuously 
  as $v$ is never \ds i.

\end{itemize}\unskip
\end{proof}

\begin{lemma}\label{lem:nk from ps}
Let $M = (\A,\G,\B)$ be an algebra-graph system and  $\c P=(N_1,P)$ 
  a coherent patch system for $\B$.
  Suppose $N_2:{}^nN_1\to\A_+$ is a function satisfying the following,
  for any $v\in  {}^nN_1$:
\begin{enumerate}
\item For $i,j < n$, we have $d_{ij} \in N_2(v)$ if and only if $v_i = v_j$.
\item $N_2(v)(i)=P(\{v_j\mid j\in n\setminus\{i\}\})$  for each $i<n$ such that $v$ is \ds i.
\item If $\sigma:n\to n$ and $v\circ\sigma:n\to N_1$ is one-one, then
$N_2(v\circ\sigma)= N_2(v)^\sigma$.
\end{enumerate}
Then $(N_1,N_2)$  is a 
polyadic  ultrafilter network.\end{lemma}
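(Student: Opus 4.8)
The plan is to verify directly the three defining conditions of a polyadic ultrafilter network (Definition~\ref{def:uf nk}) for the pair $(N_1,N_2)$. Condition~(1) of that definition is literally hypothesis~(1), so nothing is required there. What remains is the cylindric condition that $v\equiv_i w$ implies $N_2(v)\equiv_i N_2(w)$, and the full polyadic condition $N_2(v\circ\sigma)=N_2(v)^\sigma$ for \emph{every} $\sigma:n\to n$ (hypothesis~(3) supplies this only when $v\circ\sigma$ is one-one). Throughout I would use that each $N_2(u)$ is an ultrafilter of $\A$, that $N_2(v)^\sigma$ is again an ultrafilter by Lemma~\ref{lem:ss uf}, and the dictionary given by hypothesis~(1): since $d_{jk}\in N_2(u)$ iff $u_j=u_k$, the ultrafilter $N_2(u)$ contains $F_i$ (i.e.\ is $i$-distinguishing) exactly when the tuple $u$ is $i$-distinguishing.

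For the cylindric condition, assume $v\equiv_i w$, so $v_j=w_j$ for all $j\neq i$, and check the two clauses of Lemma~\ref{L:ultraproj}\ref{ultraproj4}. Clause~(a) is immediate from hypothesis~(1): for $j,k\neq i$ we have $d_{jk}\in N_2(v)\iff v_j=v_k\iff w_j=w_k\iff d_{jk}\in N_2(w)$. For clause~(b) I would split on whether $v$ (equivalently $w$, as they agree off $i$) is $i$-distinguishing. If it is, hypothesis~(2) gives $N_2(v)(i)=P(\{v_j\mid j\neq i\})=P(\{w_j\mid j\neq i\})=N_2(w)(i)$. If it is not, then neither $N_2(v)$ nor $N_2(w)$ is $i$-distinguishing, so both projections equal $\B$ by Lemma~\ref{L:ultraproj}\ref{ultraproj1}. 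Either way $N_2(v)\equiv_i N_2(w)$.

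For the polyadic condition, fix $\sigma$ and write $w=v\circ\sigma$. The heart of the argument is to show that $N_2(v)^\sigma$ satisfies hypotheses~(1) and~(2) \emph{with $w$ in place of $v$}. For~(1): as $N_2(v)^\sigma=\{a\mid s_\sigma a\in N_2(v)\}$, Lemma~\ref{lem:sub tech}\ref{lem:sub tech2} and hypothesis~(1) give $d_{ij}\in N_2(v)^\sigma\iff d_{\sigma(i)\sigma(j)}\in N_2(v)\iff v_{\sigma(i)}=v_{\sigma(j)}\iff w_i=w_j$. For~(2), suppose $w$ is $i$-distinguishing. Then the values $v_{\sigma(k)}=w_k$ $(k\neq i)$ are pairwise distinct, so $\sigma$ is injective on $n\setminus\{i\}$ and hence $\sigma[n\setminus\{i\}]=n\setminus\{j\}$ for a unique $j$; moreover $\{v_m\mid m\neq j\}=\{w_k\mid k\neq i\}$ has $n-1$ elements, so $v$ is $j$-distinguishing. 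Lemma~\ref{L:ultraproj}\ref{ultraproj6} then gives $N_2(v)^\sigma(i)=N_2(v)(j)$, and hypothesis~(2) for $v$ gives $N_2(v)(j)=P(\{v_m\mid m\neq j\})=P(\{w_k\mid k\neq i\})$, which is exactly condition~(2) for $N_2(v)^\sigma$ relative to $w$.

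Finally I would compare $N_2(v)^\sigma$ with $N_2(w)$. Both satisfy conditions~(1) and~(2) relative to $w$ (the former by the previous paragraph, the latter directly by hypothesis). If $w$ is one-one, hypothesis~(3) already yields $N_2(w)=N_2(v)^\sigma$. If $w$ is not one-one, then conditions~(1) and~(2) determine the ultrafilter uniquely: these are cases~(b) and~(c) of the proof of Lemma~\ref{lem:lots of ufs}, in which the ultrafilter is forced to be the unique one provided by Lemma~\ref{L:ultraproj}\ref{ultraproj3} (when $|\im w|=n-1$) or the principal ultrafilter on the atom $D$ (when $|\im w|<n-1$). Hence $N_2(w)=N_2(v)^\sigma$ in every case, and all three conditions hold. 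I expect the main obstacle to be the projection computation of the third paragraph --- pulling the index $j$ out of the $i$-distinguishingness of $w$ and routing it through Lemma~\ref{L:ultraproj}\ref{ultraproj6} --- together with extracting the uniqueness statement implicit in the existence proof of Lemma~\ref{lem:lots of ufs}; the first two conditions are routine given hypothesis~(1) and Lemma~\ref{L:ultraproj}\ref{ultraproj4}.
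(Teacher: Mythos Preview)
Your proof is correct and follows essentially the same route as the paper: the cylindric condition via Lemma~\ref{L:ultraproj}\ref{ultraproj4} is handled identically, and for the polyadic condition both you and the paper split on $|\im(v\circ\sigma)|$, use Lemma~\ref{lem:sub tech}\ref{lem:sub tech2} for the diagonal bookkeeping, and use the projection computation through Lemma~\ref{L:ultraproj}\ref{ultraproj6} in the $n-1$ case. The one organizational difference is that you first prove once and for all that $N_2(v)^\sigma$ satisfies conditions~(1) and~(2) relative to $w$, and then invoke the uniqueness implicit in Lemma~\ref{L:ultraproj}\ref{ultraproj3} and in the atom property of $d_{\sim}$; the paper instead argues each of the three cases separately, in particular using Lemma~\ref{lem:sub tech}\ref{lem:sub tech3} directly in the $|n/{\sim^\sigma}|<n-1$ case rather than going through your condition~(1) --- but these amount to the same thing, and your packaging is arguably tidier.
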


\begin{proof} 
We check the  conditions from Definition~\ref{def:uf nk} defining ultrafilter networks.
The first condition, that
 $d_{ij} \in N_2(v)$ if and only if $v_i = v_j$ (for $v\in{}^nN_1$ and $i,j < n$), is given to us.
 It follows that $N_2(v)$ is \ds i iff $v$ is \ds i.
 
For the second condition, 
take  $i<n$ and $v,w\in {}^nN_1$ with $v \equiv_i w$. 
We require $N_2(v) \equiv_i N_2(w)$.
By assumption (2) of the lemma, if $v,w$ are \ds i we have
\[
N_2(v)(i) =  P_2(\{v_j \mid i \neq j < n\}) = P_2(\{w_j \mid i \neq j < n\})  =  N_2(w)(i),
\]
and if they are not, then by Lemma~\ref{L:ultraproj}\ref{ultraproj1}
we have $N_2(v)(i) =  \B  =  N_2(w)(i)$.
So by Lemma~\ref{L:ultraproj}\ref{ultraproj4}, $N_2(v) \equiv_i  N_2(w)$.

\medskip

Lastly we check the third condition for ultrafilter networks.
Let  $\sigma:n\to n$, let $w=v\circ\sigma$,
and let $\osim\in Eq(n)$ be given by $i\sim j$ iff $v_i=v_j$.
Observe that $i\sim^\sigma j$ iff $w_i=w_j$.
We check that
\[
N_2(w)=N_2(v)^\sigma.
\]
There are three cases.
If $|n/\osim^\sigma|=n$, then $v\circ\sigma$ is one-one and the result is given.

Suppose  that $|n/\osim^\sigma|=n-1$.
Let $\{i,j\}$ be the unique $\osim^\sigma$-class  of size 2.
By condition~1 of the lemma,  $F_i,d_{ij}\in N_2(w)$.
Also, if $k,l<n$ then 
$d_{kl}\in N_2(w)$ iff
$k\sim^\sigma l$, iff $v_{\sigma(k)}= v_{\sigma(l)}$, iff
 $s_\sigma d_{kl}=d_{\sigma(k)\sigma(l)}\in N_2(v)$ by 
Lemma~\ref{lem:sub tech}\ref{lem:sub tech2},
iff $d_{kl}\in N_2(v)^\sigma$.
Therefore, $F_i,d_{ij}\in N_2(v)^\sigma$ as well.
So by Lemma~\ref{L:ultraproj}\ref{ultraproj3},
it remains only to show that $N_2(w)(i)=(N_2(v)^\sigma)(i)$.

Now 
if $k,l\in n\setminus\{i\}$ and $\sigma(k)=\sigma(l)$, then certainly
$k\sim^\sigma l$, so  $k=l$ by assumption on $\sim^\sigma$.
Hence,
$\sigma$ is one-one on $n\setminus\{i\}$,
so
$\sigma[n\setminus\{i\}]=n\setminus\{l\}$ for some $l<n$.
We now obtain
\[
\begin{array}{rcll}
&&N_2(w)(i)
\\
&=&P_2(\{w_k\mid k\in n\setminus\{i\}\})&\mbox{by condition 2, since $w$ is \ds i}
\\
&=&P_2(\{v_{\sigma(k)}\mid k\in n\setminus\{i\}\})&\mbox{by definition of }w
\\
&=&P_2(\{v_k\mid k\in n\setminus\{l\}\})&\mbox{since }\sigma[n\setminus\{i\}]=n\setminus\{l\}
\\
&=&N_2(v)(l)&\mbox{by condition 2, since $v$ is  \ds l}
\\
&=&(N_2(v)^\sigma)(i)&\mbox{by Lemma~\ref{L:ultraproj}\ref{ultraproj6}}.
\end{array}
\]

Finally suppose that $|n/\osim^\sigma|<n-1$.
Then
$d_{\osim^\sigma}$ is an atom of $\A$
--- this is true in algebras from graphs,
because we have $d_{\osim^\sigma}=\{(\emptyset,\osim^\sigma)\}$,
so it holds for $M$ by the generalisation technique.
So $N_2(w)$ is the principal ultrafilter generated by $d_{\osim^\sigma}$.

Let $a\in N_2(v)^\sigma$ be arbitrary, so that $s_\sigma a\in N_2(v)$.
By the first part, $d_\osim\in N_2(v)$,
so $s_\sigma a\cdot d_\osim>0$.
By Lemma~\ref{lem:sub tech}\ref{lem:sub tech1} and \ref{lem:sub tech3},
$s_\sigma(a\cdot d_{\osim^\sigma})=s_\sigma a\cdot s_\sigma d_{\osim^\sigma}
\geq s_\sigma a\cdot d_\osim>0$,
so $a\cdot d_{\osim^\sigma}>0$ as well.
As $d_{\osim^\sigma}$ is an atom, we obtain $a\geq d_{\osim^\sigma}$ and $a\in N_2(w)$.
This shows that $N_2(v)^\sigma\subseteq N_2(w)$, and equality follows since 
by Lemma~\ref{lem:ss uf} both sides are ultrafilters of $\A$.
\end{proof}

\section{Chromatic number and representability}
\label{S:chromrep}

Here we show that the chromatic number of a graph $\Gamma$ and the  representability of
 $\A(\Gamma)$ and its reducts are tied together.

Recall that the chromatic number of a graph is
the size of the smallest partition into independent sets, or $\infty$ if no such
partition exists. Although the chromatic number is in general not first-order
definable, we can define an analogue for algebra-graph systems with the following formula.
\begin{definition}\label{def:theta}
\index{$\theta_k$}
For each $k < \omega$, we define the following $L_{AGS}$-sentence:
\[
\theta_k = \forall B_0 \dots B_{k-1} \colon\B \bigg( {\sum_{i < k} B_i = 1}
 \rightarrow \exists p, q \colon\G \big(E(p,q) \land \bigvee_{i < k} \left(p \in B_i \land q \in B_i\right) \big) \bigg),
\]
and $\Theta=\{\theta_k\mid k<\omega\}$.
\end{definition}
Then $M=(\A,\G,\B)\models\theta_k$ iff the chromatic number of $\G$ is larger than $k$
`as far as $\B$ can tell'. The true chromatic number of $\G$ may 
be smaller, but $\B$ contains no independent sets witnessing this.
However, $\B$'s estimate is correct when $\B=\wp(\G)$, as in structures of the form $M(\Gamma)$.
\begin{remark}
If  $M = (\A, \G, \B)$ is an algebra-graph system, we will say an element
$B \in \B$ is an independent set, if there are no $p,q \in B$  such that
$E(p,q)$.
\end{remark}

\subsection{Representable implies infinite chromatic number}
This direction can be proved without further help, apart from some of the
machinery from the preceding section and Ramsey's theorem.

\begin{proposition} \label{P:repimpchrom}
Let $M = (\A, \G, \B)$ be an  algebra-graph system in which $\B$ is infinite. If
the diagonal-free reduct of $\A$ is  representable,
then $M \models \Theta$.
\end{proposition}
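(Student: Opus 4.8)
The plan is to prove the contrapositive. Suppose $\B$ is infinite but $M\not\models\theta_k$ for some $k<\omega$; I will show that $\A\restr L_{\df n}$ is not representable. Unwinding Definition~\ref{def:theta}, there are $B_0,\dots,B_{k-1}\in\B$ with $\sum_{i<k}B_i=1$ such that each $B_j$ is an independent set (no edge joins two of its points). Replacing $B_j$ by $B_j\cdot\prod_{j'<j}-B_{j'}$ I may assume the $B_j$ are pairwise disjoint, so that they partition $\G$; they remain independent and lie in $\B$. On the algebra side I record, for later use, three facts provable by the generalisation technique or from Lemma~\ref{L:RSProps}: that $\prod_{i<n}F_i>0$ (a quantifier-free, hence $\A$-universal, sentence true in every $M(\Gamma)$); that $\sum_{j<k}S_i(B_j)=S_i(1)=F_i$, since $S_i$ is additive and $S_i(1)=F_i$; and that each $S_i(B_j)$ is $c_i$-closed, by Lemma~\ref{L:RSProps}\ref{RSProps4}.

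The heart of the argument is a Ramsey colouring applied to a coherent patch system, so first I would manufacture such a system from the supposed representation. Passing to a factor $\wp(V)$ of the representation of $\A\restr L_{\df n}$ in which $\prod_{i<n}F_i$ has nonempty image, each base point $v$ lying in the image of $\prod_{i<n}F_i$ yields an ultrafilter $\mu_v=\{a\in\A\mid v\in h(a)\}$ of $\A$ that is $i$-distinguishing for every $i<n$, whose projections $\mu_v(i)\in\B_+$ are genuine ultrafilters of $\B$ by Lemma~\ref{L:ultraproj}\ref{ultraproj1}. By Lemma~\ref{L:coherent}, exactly such projection data certify coherence of the associated $n$-set. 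I would assemble these ultrafilters into a cylindric ultrafilter network $\N$ over $\A$ and take its boundary $\partial\N$, which is a coherent patch system $\P=(P_1,P_2)$ for $\B$ by Proposition~\ref{prop:DN coh}; and since $\B$ is infinite it has infinitely many ultrafilters, which I would use to ensure that the node set $P_1$ is infinite.

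Now the partition of $\G$ colours the patches: as each $B_j\in\B$, the $B_j$ are disjoint, and $\sum_j B_j=1$, every patch-ultrafilter $P_2(s)\in\B_+$ contains exactly one block, giving a map $c:[P_1]^{n-1}\to k$. Since $P_1$ is infinite, the infinite Ramsey theorem for $(n-1)$-subsets yields an infinite $H\subseteq P_1$ on which $c$ is constant, say with value $j_0$. Choose any $V=\{v_0,\dots,v_{n-1}\}\in[H]^n$ and put $V_i=V\setminus\{v_i\}$. Each $V_i$ has colour $j_0$, so $B_{j_0}\in P_2(V_i)$ for all $i<n$; applying $\P$-coherence of $V$ with the choice $B_i=B_{j_0}$ produces $p_0,\dots,p_{n-1}\in B_{j_0}$ whose set is not independent, i.e.\ $E(p_a,p_b)$ for some $a,b$. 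But then $p_a,p_b\in B_{j_0}$ are joined by an edge, contradicting the independence of $B_{j_0}$. This contradiction shows that $\A\restr L_{\df n}$ is not representable, which is what we wanted.

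The reductions of the first paragraph and the Ramsey bookkeeping of the third are routine once the framework is in place. The main obstacle is the construction in the second paragraph: extracting a coherent patch system, with \emph{infinite} node set, from a merely diagonal-free representation. Because the diagonal-free signature has no operations linking coordinates, and the base sets $U_i$ of a diagonal-free set algebra need not even be comparable, one cannot read the diagonal conditions of a cylindric ultrafilter network off the representation directly, so the identification of base points with tuples of nodes must be handled with care. I would manage this by working coordinatewise and exploiting the $c_i$-closedness of the $S_i(B_j)$ to see that the $i$th projection $\mu_v(i)$ depends only on the patch of $v$ (its coordinates other than $i$), which is exactly what makes $\partial\N$ well defined; and I would lean on $\B$ being infinite to guarantee that infinitely many distinct patches, hence an infinite $P_1$, are available for the Ramsey step.
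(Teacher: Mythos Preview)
Your Ramsey endgame on a coherent patch system is exactly the paper's, and the reductions in your first and third paragraphs are fine. The difficulty you flag in the second paragraph, however, is real and is not closed by the argument you sketch.

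From a \emph{diagonal-free} representation $h:\A\restr L_{\df n}\hookrightarrow\wp(\prod_{i<n}U_i)$ you do get, for each $v\in h(\prod_i F_i)$, an ultrafilter $\mu_v$ of $\A$ with $F_i\in\mu_v$ for all $i$, and your $c_i$-closedness observation correctly shows that $\mu_v(i)$ depends only on $(v_j)_{j\neq i}$. But that is not the well-definedness needed for $\partial\N$. A patch system labels \emph{unordered} $(n-1)$-subsets of a \emph{single} set $P_1$, and the lemma that makes $\partial\N$ well defined (Lemma~\ref{L:patchwd}(iv), independence of which coordinate is omitted) rests squarely on the diagonal condition $d_{ij}\in N_2(v)\Longleftrightarrow v_i=v_j$ of Definition~\ref{def:uf nk}(1). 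In a diagonal-free set algebra the $U_i$ are incomparable, so there is no candidate $P_1$ on whose $(n-1)$-subsets the projections $\mu_v(i)$ live; the best you obtain is $n$ separate maps $P^i_2:\prod_{j\neq i}U_j\to\B_+$, which is not a patch system and does not feed Ramsey on $[P_1]^{n-1}$. (Note also that every such $\mu_v$ has $-d_{ij}\in\mu_v$ for all $i\neq j$, so the diagonal condition for a cylindric network would force all coordinates of $v$ to be ``distinct'', a statement that does not even type-check across different $U_i$.) Your remark that $\B$ has infinitely many ultrafilters likewise does not by itself manufacture an infinite node set for a network.

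The paper closes this gap with an extra idea you are missing. It passes to the subalgebra $\A'\subseteq\A$ generated (under boolean operations) by the elements of dimension $<n$; this contains all $d_{ij}$ and all $S_i(B)$, and $(\A',\G,\B)$ is still an algebra-graph system because the axioms $\U$ are $\A$-universal. Then it invokes Johnson's theorem \cite[Theorem~1.8(i)]{Johnson69}: a simple cylindric algebra generated by elements of dimension $<n$ whose diagonal-free reduct is representable is itself representable as a cylindric algebra. Simplicity (Corollary~\ref{C:simple}) and Lemma~\ref{L:easyembed} then give an embedding into a single cylindric set algebra with base $S$. Now $S$ is a genuine single set; infinitude of $\B$ forces $\A'$ infinite (via injectivity of $S_i$, Lemma~\ref{L:RSProps}\ref{RSProps3}) and hence $S$ infinite; and $\N(s)=\{a\in\A':s\in h(a)\}$ is an honest cylindric ultrafilter network over $\A'$, from which $\partial\N$ and your Ramsey argument go through verbatim. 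The passage through $\A'$ and Johnson's theorem is the missing step.
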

\begin{proof}
Suppose for a contradiction that
the reduct of $\A$ to the signature of diagonal-free cylindric algebras
is  representable but
$M \not\models \theta_k$ for some $k<\omega$.

Recall (e.g., from \cite[\sec1.6]{Henkin71})
that for $a\in\A$, $\Delta a=\{i<n\mid c_ia\neq a\}$.
Define
 $D=\{a\in \A\mid \Delta a\neq n\}$,
 and let $\A'$ be the closure of $D$ under the boolean operations.
We first claim that $\A'$ is a subalgebra of $\A$. 
By Lemma~\ref{lem:A CA}, the cylindric reduct of $\A$ is a cylindric algebra.
By basic cylindric algebra, or the generalisation technique,
$\Delta0=\Delta1=\Delta d_{ii}=\emptyset$ for $i<n$; also,
$\Delta d_{ij}=\{i,j\}$ for distinct $i, j<n$, so since
$n\geq3$,
$\Delta d_{ij}\neq n$;
finally,
if $a\in\A'$ and $i<n$ then  $i\notin\Delta c_ia$.
So all these elements are in $D$ and hence in $\A'$.
Obviously,
$\A'$ is closed under $+$ and~$-$.
By Lemma~\ref{lem:sub tech}\ref{lem:sub tech5}, $D$ is  closed under
each $s_\sigma$, so by
Lemma~\ref{lem:sub tech}\ref{lem:sub tech1}, so is $\A'$.  This proves the claim.

Now let   $N=(\A',\G,\B)$.
We claim next that $N$ is a substructure of $M$.
Inspecting the function symbols of $L_{AGS}$, it suffices 
to show that  $S_i(B)\in\A'$ for every $B\in\B$ and $i<n$.
But by Lemma~\ref{L:RSProps}\ref{RSProps4},
$i\notin \Delta S_i(B)$,
so $S_i(B)\in D\subseteq\A'$.
This proves the claim.

As $M\models\U$ and all sentences in $\U$ are
 $\A$-universal, it follows that
$N\models \U$.
So $N$ is also 
an algebra-graph system in which $\B$ is infinite.
By Lemma~\ref{lem:A CA} and Corollary~\ref{C:simple}, the cylindric reduct
$\A'\restr{L_{\ca n}}$ is a simple cylindric algebra.
It is generated by $D$, and its diagonal-free reduct is representable
(since the diagonal-free reduct of $\A$ is).
It follows from a theorem of Johnson \cite[Theorem 1.8(i)]{Johnson69}
that $\A'\restr{L_{\ca n}}$ is representable as a cylindric algebra.
So by Lemma~\ref{L:easyembed}, there is a cylindric representation
$h$ that embeds $\A'\restr{L_{\ca n}}$ into a single cylindric set algebra
$\S = (\wp({}^nS), \cup, \setminus, \emptyset, {}^nS, D^S_{ij}, C^S_{i})_{i,j < n}$ with base set
$S$.

Let $\N$ be the ultrafilter network with nodes $S$ and
$\N( s) = \{a \in \A' \mid  s \in h(a)\} \in \A'_+$, for $s\in{}^nS$. This is easily seen to be
a well-defined cylindric ultrafilter network
over $\A'$.
Furthermore, by Proposition~\ref{prop:DN coh} we can
make it into a well-defined and coherent patch system $\partial \N$.

Now $M \not\models \theta_k$ means
that the following is true in $M$ and therefore $N$:
\[
 \exists B_0, \dots, B_{k-1} \colon \B \Big(
{\sum_{i<k} B_i = 1} \land \forall p,q 
\bigwedge_{i < k}\Big(
p \in B_i \land q \in B_i \rightarrow \lnot E(p,q) \Big)\Big).
\]
So $\G$ is the union of $k$
independent sets from $\B$: say, $B_0, \dots, B_{k-1}$.

Since $\B$ is infinite, by Lemma~\ref{L:RSProps}\ref{RSProps3} 
$\A'$ is also infinite.  As $h$ is injective, $\S$ is infinite and therefore $S$
as well. So we can choose infinitely many pairwise distinct elements $s_0, s_1, \dots$
from $S$. Now define a map $f : [\omega]^{n-1} \to k$ by letting
$f(\{i_1, \dots, i_{n-1}\})$ be the least $j < k$ such that
$B_j \in \partial \N (\{s_{i_1}, \dots, s_{i_{n-1}}\})$. By Ramsey's theorem
\cite{Ramsey30}, we
can choose the elements so that $f$ has constant value $c$, say.
Now consider   $\{s_0, \dots, s_{n-1}\}$.
Since $f$ is constant, $B_c \in \partial\N(\{s_j \mid i \neq j < n\})$ for all
$i < n$. Because $\partial\N$ is coherent, we can choose 
$p_0, \dots, p_{n-1} \in B_c$ so that $\{p_0, \dots, p_{n-1}\}$ is not an independent
set. But this is impossible since $B_c$ is independent.
\end{proof}

\subsection{Infinite chromatic number implies representable}
For the other direction, we define a game that allows us to build a
polyadic representation
for $\A$ if $M=(\A,\G,\B)\models\Theta$ 
(i.e., $\G$  has infinite chromatic number in the sense of $\B$).

\begin{definition}
\index{game}
Let $M = (\A, \G, \B)$ be an algebra-graph system.
A \emph{game} $G(\A)$ is an infinite sequence of polyadic ultrafilter networks
\[
\N_0 \subseteq \N_1 \subseteq \dots
\]
built by the following rules.
There are two players, named $\forall$ and $\exists$.
The game begins with the (unique) one-point network $\N_0$. There are $\omega$
rounds. In round $t < \omega$, the current network (at the start of the round)
 is $\N_t$ and  player
$\forall$ chooses an $n$-tuple $v \in {}^n\N_t$, a number $i < n$ and an element
$a \in \A$ such that $c_i a \in \N_t(v)$. The other player $\exists$ then has to
respond with an ultrafilter network $\N_{t+1} \supseteq \N_t$ such that there is
$w \in {}^n\N_{t+1}$ with $w \equiv_i v$ and $a \in \N_{t+1}(w)$. She wins the game
if she can play a network that satisfies these constraints in each round.
\end{definition}

\begin{lemma} \label{L:game}
Let $M = (\A, \G, \B)$ be an algebra-graph system. If $\exists$ has a
winning strategy in the game $G(\A)$, then $\A$ is a
representable polyadic equality algebra.
\end{lemma}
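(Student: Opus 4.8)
The plan is to use a winning strategy for $\exists$ to build a single infinite polyadic ultrafilter network whose nodes carry enough information to serve as a representation of $\A$. First I would fix a winning strategy for $\exists$ and describe a scheduling of $\forall$'s moves that guarantees every possible `defect' is eventually challenged. Concretely, I would run $\omega$ plays of the game, or more precisely a single play in which $\forall$ enumerates (using a bookkeeping argument, since the network grows but stays countable if $\A$ is, and more generally one takes a long enough well-ordered play or a union of a directed system) every triple $(v,i,a)$ with $v\in{}^n\N_t$, $i<n$, and $c_ia\in\N_t(v)$, playing each such triple at some round. Since $\exists$ follows her winning strategy, at each such challenge she produces a witness: a node-tuple $w\equiv_i v$ with $a\in\N_{t+1}(w)$. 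Taking $\N=\bigcup_{t}\N_t$ (using the union of a chain of ultrafilter networks defined in Definition~\ref{def:uf nk}), I obtain a polyadic ultrafilter network with node set $X=N_1$ satisfying a crucial \emph{saturation} property: whenever $c_ia\in\N(v)$, there exists $w\in{}^nX$ with $w\equiv_iv$ and $a\in\N(w)$.

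Next I would define the candidate representation $h:\A\to\wp({}^nX)$ by
\[
h(a)=\{v\in{}^nX\mid a\in\N(v)\}.
\]
The bulk of the proof is checking that $h$ is a homomorphism into the polyadic equality set algebra on base $X$, and that it is injective. Since each $\N(v)$ is an ultrafilter of $\A$, the boolean clauses are routine: $h$ preserves $+,-,0,1$ because ultrafilters respect the boolean operations. For the diagonals, clause~(1) of the definition of cylindric ultrafilter network gives $d_{ij}\in\N(v)\iff v_i=v_j$, which is exactly $h(d_{ij})=D_{ij}^X$. For the cylindrifications, the inclusion $C_i^X h(a)\subseteq h(c_ia)$ follows from clause~(2) together with $a\le c_ia$, while the reverse inclusion $h(c_ia)\subseteq C_i^X h(a)$ is precisely where the saturation property is needed: if $c_ia\in\N(v)$, saturation supplies $w\equiv_iv$ with $a\in\N(w)$, so $v\in C_i^X h(a)$. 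For the substitutions, the polyadic clause~(3), $\N(v\circ\sigma)=\N(v)^\sigma$, unwinds via the definition $\nu^\sigma=\{a\mid s_\sigma a\in\nu\}$ to give $v\in h(s_\sigma a)\iff v\circ\sigma\in h(a)$, which is exactly $h(s_\sigma a)=S_\sigma^X h(a)$.

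The main obstacle is the $h(c_ia)\subseteq C_i^X h(a)$ step, and behind it the bookkeeping needed to ensure the strategy is applied to \emph{every} relevant triple so that saturation genuinely holds in the limit network. I would handle the bookkeeping by interleaving a fair enumeration of all triples $(v,i,a)$ over the (growing) network, revisiting nodes as they are added; a standard dovetailing argument ensures each triple arising at any finite stage is eventually played. Injectivity of $h$ then comes almost for free: by Corollary~\ref{C:simple} the cylindric reduct of $\A$ is simple, so it suffices to show $h$ is non-trivial, i.e.\ $h(1)\neq\emptyset$, which holds since $X=N_1\neq\emptyset$ and $1\in\N(v)$ for any $v$; then $h$ restricted to the cylindric reduct is a representation that is an embedding, and since $h$ preserves all the polyadic operations as well, $\A$ is a representable polyadic equality algebra.
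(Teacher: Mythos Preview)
Your approach mirrors the paper's: build a saturated polyadic ultrafilter network from a play of $G(\A)$, define $h(a)=\{v\mid a\in\N(v)\}$, verify the homomorphism clauses (with saturation supplying the key inclusion $h(c_ia)\subseteq C_i^X h(a)$), and invoke simplicity (Corollary~\ref{C:simple}) for injectivity. The verification steps you sketch are correct.

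There is, however, a gap in the saturation step. The game $G(\A)$ is defined with exactly $\omega$ rounds, so a single play schedules only countably many triples $(v,i,a)$. When $\A$ is uncountable, no dovetailing over $\omega$ rounds can exhaust all $a\in\A$, so the limit network $\N$ need not be saturated and $h$ need not preserve $c_i$. Your parenthetical fixes (``a long enough well-ordered play'' or ``a union of a directed system'') are plausible in spirit but are not justified as written: a winning strategy for the $\omega$-round game is a function on \emph{finite} histories, and extending it to transfinite play needs an additional argument you have not supplied. The paper sidesteps the issue entirely by first passing to a countable elementary subalgebra $\A_0\preceq\A$ via the downward L\"owenheim--Skolem--Tarski theorem; $\forall$ can then enumerate all moves over $\A_0$ within $\omega$ rounds, the resulting $h$ represents $\A_0$, and since $\rpea n$ is a variety (hence elementary), $\A_0\in\rpea n$ yields $\A\in\rpea n$.
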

\begin{proof}
By the downward L\"owenheim--Skolem--Tarski theorem (see e.g. \cite{Chang90}), there is
a countable  elementary subalgebra $\A_0$ of $\A$. Let
$\N_0 \subseteq \N_1 \subseteq\cdots$ be a play of the game $G(\A)$ 
in which $\forall$ plays every possible
move in $\A_0$ and $\exists$ uses her winning strategy in $G(\A)$ to respond.
Define $\N = \bigcup_{t<\omega} \N_t$. This is certainly a
polyadic ultrafilter network over $\A$,
as all the $\N_t$ are polyadic ultrafilter networks.
Now define:
\begin{align*}
h : \A_0 &\to \big(\wp({}^n\N), \cup, \setminus, \emptyset, {}^n\N, D_{ij}^\N, C_i^\N,
S^\N_\sigma\mid i,j < n,\,\sigma:n\to n\big)\\
    a    &\mapsto \{v \in {}^n\N \mid a \in \N(v)\}.
\end{align*}
It can be checked that $h$ is 
a homomorphism.
Recall from Corollary~\ref{C:simple} that $\A_0$ is simple. So, since
$h(1) = {}^n\N \neq \emptyset = h(0)$, the map $h$ is injective. This shows
that $\A_0$ is representable, and because $\rpea{n}$ is a variety,
$\A$ is representable as well.
\end{proof}
\begin{remark}
The converse of the lemma also holds, but is not needed here.
\end{remark}

By the generalisation technique, in any 
algebra-graph system $(\A, \G, \B)$, $H$ defines an 
equivalence relation 
on $\G$ with $n$ classes, each  of which is in $\B$ since  the following
  $\A$-universal sentence is true in algebras from graphs:
  \[ 
  \forall x : \G\; \exists B : \B \; \forall y : \G (y \in B \leftrightarrow H(x,y)).
  \]

\begin{lemma} \label{L:ultraind}
Let $M = (\A, \G, \B)$ be an algebra-graph system such that
$M \models\Theta$.
Let $X$ be an equivalence class of $H$.
Then there is an ultrafilter $\nu$ 
of $\B$ that contains $X$ but contains no independent sets.
\end{lemma}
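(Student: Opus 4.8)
The plan is to obtain $\nu$ as any ultrafilter extending the filter generated by the base $\{X\}\cup\{-B\mid B\in\B\text{ independent}\}$; such an extension exists by the boolean prime ideal theorem provided this base has the finite intersection property. Any such $\nu$ then contains $X$ and, since it contains $-B$ for every independent $B$, omits every independent set, as required. So the whole proof reduces to verifying the finite intersection property. Recalling from the paragraph preceding the lemma that $X\in\B$, and that $X\neq 0$ since $X$ is a nonempty set and $\B\hookrightarrow\wp(\G)$ faithfully by Lemma~\ref{lem:A CA}, the intersections not involving $X$ are harmless: $\prod_{i<k}(-B_i)\neq 0$ just says $\sum_{i<k}B_i\neq 1$, which holds because $M\models\theta_k$ forbids covering $\G$ by $k$ independent sets. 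The essential case is $X\cdot\prod_{i<k}(-B_i)\neq 0$, i.e.\ $X\not\le\sum_{i<k}B_i$, and this is where $M\models\Theta$ must be used.

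Assuming for contradiction that $X\le\sum_{i<k}B_i$ with each $B_i$ independent, I would first replace $B_i$ by $B_i\cdot X$; these still lie in $\B$ and are still independent, and now $\sum_{i<k}B_i=X$, so $X$ itself is covered by $k$ independent members of $\B$. (It is worth noting, as intuition, that independent sets always lie inside a single $H$-class: the $\A$-universal sentence $\forall p,q:\G(\neg H(p,q)\to E(p,q))$ holds in every $M(\Gamma)$, since distinct copies of $\Gamma\times n$ are completely joined, and hence lies in $\U$.) The key step is to promote this cover of the single class $X$ to a cover of all of $\G$. Writing $\mathrm{Ind}(B)$ for $\forall s,t:\G(s\in B\wedge t\in B\to\neg E(s,t))$ and $\mathrm{Cov}_m$ for $\exists C_0\dots C_{m-1}:\B(\sum_{i<m}C_i=1\wedge\bigwedge_{i<m}\mathrm{Ind}(C_i))$, I would introduce the sentence
\[
\rho_k:\ \forall p:\G\,\forall B_0\dots B_{k-1}:\B\Big[\Big(\forall q:\G\big(q\in\textstyle\sum_{i<k}B_i\leftrightarrow H(p,q)\big)\wedge\bigwedge_{i<k}\mathrm{Ind}(B_i)\Big)\to\mathrm{Cov}_{kn}\Big].
\]
This has no quantifiers over the $\A$-sort, so it is $\A$-universal, and it is true in every $M(\Gamma)$: there the $H$-classes are the copies $\Gamma\times\{\ell\}$, which are pairwise graph-isomorphic via $(x,\ell)\mapsto(x,\ell')$, and $\B=\wp(\G)$ supplies all sets, so a cover of one class by $k$ independent sets transports to each of the $n$ classes and their union covers $\G$ by $kn$ independent sets. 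Hence $\rho_k\in\U$ and $M\models\rho_k$ by the generalisation technique.

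Applying $\rho_k$ to a representative $p\in X$ together with the cover found above yields $\mathrm{Cov}_{kn}$ in $M$, i.e.\ a cover of $\G$ by $kn$ independent sets, contradicting $M\models\theta_{kn}$. This contradiction establishes $X\not\le\sum_{i<k}B_i$, completing the finite intersection property and hence the construction of $\nu$. The main obstacle I anticipate is isolating and justifying $\rho_k$: its content is precisely the class-symmetry of algebra-graph systems, and the subtle point is that $\Theta$ by itself forces only \emph{some} $H$-class to have infinite chromatic number in the sense of $\B$, whereas the $\A$-universal sentences $\rho_k$ force all classes to behave alike, thereby pinning the infinite chromatic number onto the arbitrary class $X$.
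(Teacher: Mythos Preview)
Your proof is correct and follows essentially the same approach as the paper: both establish the finite intersection property by showing that if $X$ were covered by $k$ independent sets in $\B$, then by an $\A$-universal sentence (your $\rho_k$, which the paper describes in words) transporting the cover to all $n$ $H$-classes, $\G$ would be covered by $nk$ independent sets, contradicting $\theta_{nk}$. The only cosmetic difference is that the paper uses the slightly larger filter base $\nu_0=\{B\in\B\mid X-B\text{ is independent}\}$ (which contains $X$ since $X-X=0$ is independent, and contains $-B$ for each independent $B$ since $X\cdot B$ is independent), but the FIP argument and the resulting ultrafilter are the same.
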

\begin{proof}
Let $\nu_0=\{B\in\B\mid X- B$ is independent\}. Then $\nu_0$ contains $X$ (clearly), and
has the finite intersection property: Suppose for a contradiction that for
$B_0, \dots, B_{k-1}\allowbreak \in \nu_0$ we have $B_0 \cdot B_1 \cdots B_{k-1} = 0$.
Then
\begin{align*}
X= X-(B_0 \cdot B_1  \cdots B_{k-1}) = (X- B_0) + (X- B_1) + \dots + (X- B_{k-1}).
\end{align*}
So $X$ is the union of $k$ independent sets in $\B$.
Now in any structure $M(\Gamma)$,
if an $H$-class is the union of $k$ independent sets in $\B$,
then copies of these sets  for every $H$-class lie in $\B$, 
so that $\Gamma$ is the union of $nk$ independent sets in $\B$
--- that is, $M(\Gamma)\models\neg\theta_{nk}$.
This implication  is $\A$-universal,
so it holds in $M$.
Hence, $M \not\models \theta_{nk}$, a contradiction.
Thus $\nu_0$ has the finite intersection property and, by the boolean prime ideal
theorem,  it can be extended to an ultrafilter $\nu$, which 
contains $X$ but no independent set (because it contains the complement).
\end{proof}
\begin{remark}
The converse of Lemma~\ref{L:ultraind} also holds, but is not needed here.
\end{remark}

\begin{proposition} \label{P:chromimprep}
Let $M = (\A, \G, \B)$ be an algebra-graph system. If
$M \models \Theta$, then $\A$ is representable
as a polyadic equality algebra.
\end{proposition}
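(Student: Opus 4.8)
The plan is to reduce the statement to the game $G(\A)$: by Lemma~\ref{L:game} it suffices to show that, under the hypothesis $M\models\Theta$, player $\exists$ has a winning strategy, so the whole proof consists of describing such a strategy and checking that it keeps the game alive forever. Throughout, $\exists$ maintains as an invariant that the current network $\N_t$ is a polyadic ultrafilter network whose derived patch system $\partial\N_t$ (Definition~\ref{def:DN}) is coherent; by Proposition~\ref{prop:DN coh} this holds at the start, for the one-point network $\N_0$. The hypothesis $M\models\Theta$ enters exactly once, through Lemma~\ref{L:ultraind}: I fix an ultrafilter $\nu$ of $\B$ that contains no independent set, and use it as the generic value for patches created freely during the play. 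The only property of $\nu$ that matters is that any finite meet of its members again lies in $\nu$, hence is non-independent and so spans an edge of $\G$; this is the combinatorial content of infinite chromatic number that makes coherence achievable.

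For the response move, suppose $\forall$ plays $v\in{}^n\N_t$, $i<n$ and $a\in\A$ with $c_ia\in\N_t(v)$, and write $E=\prod_{l\neq i}-d_{il}$. Using additivity of $c_i$ and the standard cylindric identity $s_{[i/l]}a=c_i(a\cdot d_{il})$ (valid in $\A$ since $\A\restr L_{\ca n}$ is a cylindric algebra by Lemma~\ref{lem:A CA}, or by the generalisation technique), one has
\[
c_ia=c_i(a\cdot E)+\sum_{l\neq i}c_i(a\cdot d_{il})=c_i(a\cdot E)+\sum_{l\neq i}s_{[i/l]}a.
\]
As $\N_t(v)$ is an ultrafilter, some summand lies in it, giving two cases. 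If $s_{[i/l]}a\in\N_t(v)$ for some $l\neq i$, then $w=v\circ[i/l]$ satisfies $w\equiv_i v$ and, by the polyadic network condition, $\N_t(w)=\N_t(v)^{[i/l]}$, so $a\in\N_t(w)$; thus $\exists$ wins this round with $\N_{t+1}=\N_t$, adding nothing. If instead $c_i(a\cdot E)\in\N_t(v)$, a routine finite-intersection-property argument (again using that $c_i$ is normal, additive and idempotent and that $x\leq c_ix$) produces an ultrafilter $\mu$ with $a\cdot E\in\mu$ and $\mu\equiv_i\N_t(v)$; here $E\in\mu$ forces coordinate $i$ to be separated, so $\mu$'s equality pattern matches that of a tuple $w\equiv_i v$ with $w_i$ a fresh node $p$.

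In this second case $\exists$ adjoins $p$, sets $\N_{t+1}(w)=\mu$, and extends $\partial\N_t$ to a patch system on the enlarged node set by keeping all old patches, assigning to the patches of $\im w$ the forced projections $\mu(\ell)$ (these agree with the old value on the patch not involving $p$, since $\mu\equiv_i\N_t(v)$ forces $\mu(i)=\N_t(v)(i)$ by Lemma~\ref{L:ultraproj}\ref{ultraproj4}), and assigning the value $\nu$ to every remaining new patch. Lemmas~\ref{lem:lots of ufs} and~\ref{lem:nk from ps} are then used to fill in ultrafilters at all new $n$-tuples and produce a polyadic ultrafilter network $\N_{t+1}\supseteq\N_t$ with $\partial\N_{t+1}$ equal to this patch system; the tuple $w$ is handled by taking $\mu$ itself as the witness of Lemma~\ref{lem:lots of ufs}, so that indeed $a\in\N_{t+1}(w)$.

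The main obstacle is verifying that this enlarged patch system is coherent, since that is what permits the next round. By Lemma~\ref{L:coherent} this reduces, for each $n$-subset $V$ of the new node set, to exhibiting an ultrafilter with the prescribed projections: sets $V$ avoiding $p$ are covered by the inductive coherence of $\partial\N_t$, and $V=\im w$ is covered by $\mu$. The real work lies in the sets $V$ containing $p$ together with old nodes, where a graph edge must be found among representatives drawn from the patch ultrafilters of $V$. The delicate point is to arrange the free assignment so that every such $V$ has enough of its patches lying in the independent-set-free ultrafilter $\nu$ (so that representatives chosen from a common member of $\nu$ are adjacent); securing this — especially for small $n$, where it constrains which new patches receive the value $\nu$ and how $p$ is situated relative to the $H$-classes — is the heart of the argument and is precisely where $\nu$, and hence the hypothesis $M\models\Theta$, is indispensable. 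The remaining checks, that $\N_{t+1}$ is genuinely polyadic and respects the substitution operators, are routine given Lemmas~\ref{lem:sub tech} and~\ref{lem:nk from ps}. Once the strategy is seen to survive every round, Lemma~\ref{L:game} yields that $\A$ is representable.
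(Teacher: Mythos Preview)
Your overall architecture matches the paper's: reduce to the game via Lemma~\ref{L:game}, construct an ultrafilter $\mu$ for the new tuple $w$, extend $\partial\N_t$ to a patch system on the enlarged node set, and invoke Lemmas~\ref{lem:lots of ufs} and~\ref{lem:nk from ps} to fill in the network. Your case split via $s_{[i/l]}a$ is equivalent to the paper's ``if a suitable $w$ already exists in $\N_t$'' reduction.

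The genuine gap is your decision to fix a single $\nu$ once at the outset. In the coherence check for a set $U$ with $z\in U$ and $|U\cap\im(w)|=n-1$, exactly one patch of $U$ is a ``free'' patch receiving the value $\nu$, one is a $W_{j'}$ with value $\mu(j')$ for some $j'\neq i$, and the remaining patches are old. You need an edge among representatives drawn from these ultrafilters. The fact that $\nu$ contains no independent set is useless here, since you only draw \emph{one} point from $\nu$. The paper resolves this by choosing $\nu$ \emph{afresh at each round}, via Lemma~\ref{L:ultraind}, to contain an $H$-class $G_\ell$ that is not contained in any $\mu(j)$ for $j\neq i$ (possible because there are $n$ $H$-classes but only $n-1$ such $j$). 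Then a $\nu$-representative in $G_\ell$ and a $\mu(j')$-representative in its own $H$-class $G_m\neq G_\ell$ are adjacent by~$(\dagger)$. This choice of $\ell$ depends on $\mu$, which in turn depends on $\forall$'s element $a$: since $a\leq S_{j'}(G_\ell)$ would force $G_\ell\in\mu(j')$, $\forall$ can steer any pre-chosen $H$-class into some $\mu(j')$, so a $\nu$ fixed in advance cannot be made to work. Your remark that this case ``is the heart of the argument'' is exactly right, but your setup precludes the resolution; the round-by-round selection of $\nu$ relative to $\mu$ is the missing idea.
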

\begin{proof}
By Lemma~\ref{L:game} it is sufficient to show that player $\exists$ has a winning strategy
in the game $G(\A)$. Suppose we are in round $t$ and the current polyadic ultrafilter network
is $\N_t$. According to the rules,  player $\forall$ chooses $a \in \A$, $i < n$
and $v \in {}^n\N_t$ with $c_ia\in \N_t(v)$. The other player $\exists$ now has to respond with a network
$\N_{t+1} \supseteq \N_t$ that contains some tuple 
$w \in {}^n\N_{t+1}$ such that $v \equiv_i w$
and $a \in \N_{t+1}(w)$. If there is already such  a  $w $ in $ {}^n\N_t$ 
then she can just respond with the unchanged network $\N_t$. So
we assume in the following that there is no such $w$.

\textbf{Step 1.} Let $N_{t+1} = \N_t \cup \{z\}$, where $z \not\in \N_t$ is a new
node. Let the tuple $w$ be defined by
$w \equiv_i v$ and $w_i = z$.
We will first try to find an ultrafilter of $\A$ for $w$.
 To help $\exists$ win the game, the ultrafilter
should contain $a$. We achieve this by showing that the following set has the
finite intersection property:
\[
\mu_0 = \{a\} \cup \{-d_{ij} \mid i \neq j < n\} \cup \{c_i b \mid b \in \N_t(v)\}.
\]

Let $D = \prod_{j \neq i} -d_{ij}$. We claim that
$c_i (a \cdot D) \in \N_t(v)$.
Assume for contradiction that
$c_i (a \cdot D) \not\in \N_t(v)$. 
Clearly, $D + \sum_{j \neq i} d_{ij} = 1$.
Therefore, $c_i a = c_i (a \cdot D) + \sum_{j \neq i} c_i (a \cdot d_{ij})\in\N_t(v)$.
So there is $j \neq i$ such that
$c_i (a \cdot d_{ij}) \in \N_t(v)$. 
Let $v' = v\circ[i/j]$. Then
$v \equiv_i v'$, so by  definition of ultrafilter networks,
$\N_t(v) \equiv_i \N_t(v')$. So 
$c_i (a \cdot d_{ij})=c_ic_i (a \cdot d_{ij}) \in \N_t(v')$ as well. But $v_i = v_j$,
 and therefore by  definition of ultrafilter networks, $d_{ij} \in \N_t(v')$. Thus
$d_{ij} \cdot c_i (a \cdot d_{ij}) \in N_t(v')$. In algebras from graphs
(and in cylindric algebras generally) we
certainly have
\[
\forall a : \A (d_{ij} \cdot c_i(a \cdot d_{ij}) \leq a).
\]
Hence, by the generalisation technique, $a \in N_t(v')$. 
But this contradicts our assumption
that no suitable tuple $w$ exists in ${}^n\N_t$.
So we must have
$c_i (a \cdot D) \in \N_t(v)$ as claimed.

Now, if $\mu_0$ failed the finite intersection property, there would
be  $b_0, \dots, b_{m-1} \allowbreak\in \N_t(v)$ such that
$a \cdot D \cdot c_i b_0 \cdots c_i b_{m-1} = 0$.
Then by cylindric algebra,
$0=c_i(a\cdot D\cdot c_i b_0 \cdots c_i b_{m-1})=c_i(a\cdot D)\cdot c_i b_0 \cdots c_i b_{m-1}\in \N_t(v)$, a contradiction.
Thus $\mu_0$ has
the finite intersection property.

By the boolean prime ideal theorem, player $\exists$ can
choose an ultrafilter $\mu$ of $\A$ that contains $\mu_0$. By construction,
 $\N_t(v)\equiv_i\mu$. Moreover,  
 \[ \label{E:dist}
d_{jk} \in \mu \iff w_j = w_k \tag{$\star$}
\]
for all $j,k < n$, because for $j\neq i$ we have $w_i \neq w_j$ and $-d_{ij} \in \mu$, and for
$j,k \neq i$,
\[
\begin{array}{ccccccc}
w_j = w_k 
&\Rightarrow 
& v_j = v_k 
&\Rightarrow
& d_{jk} \in \N_t(v)
& \Rightarrow 
& d_{jk} = c_i d_{jk} \in \mu,
\\
w_j \neq w_k 
&\Rightarrow
& v_j \neq v_k 
&\Rightarrow 
&-d_{jk} \in \N_t(v) 
&\Rightarrow
& -d_{jk} = c_i{-} d_{jk} \in \mu.
\end{array}
\]

\textbf{Step 2.} $\exists$ also needs to define ultrafilters for all the
remaining new tuples containing $z$. She can  do this with the help of the patch
system $\P = (N_{t+1}, P_2)$, defined as follows.  
\begin{itemize}
\item For each set of `old' nodes $V \in [\N_{t}]^{n-1}$, we define
  $P_2(V) = \partial \N_t(V)$.
\item For each $j < n$, define $W_j = \{w_k \mid j \neq k < n\}$. For each $W_j$
  of size $n-1$, she has to define $P_2(W_j)$.
  
For the case $j=i$, if $|W_i| = n - 1$ then because
  $W_i \subseteq \N_t$, she already defined $P_2(W_i) = \N_t(v)(i)= \mu(i)$
  (by Lemma~\ref{L:ultraproj}\ref{ultraproj4}).

  Now consider the $j \neq i$ with $|W_j| = n-1$.
Then $z\in W_j\not\subseteq \N_t$.
  We showed in \eqref{E:dist} that $\mu$ is $j$-distinguishing if $w$ is, so
  $\mu(j)$ is an ultrafilter of $\B$ in that case. So we define $P_2(W_j) = \mu(j)$.
  Note that this is well defined, because if there is $k \neq i,j$ such that
  $W_k = W_j$, then $w_j = w_k$, and thus by \eqref{E:dist} $d_{jk} \in \mu$
  and by Lemma~\ref{L:ultraproj}\ref{ultraproj2}, $\mu(j) = \mu(k)$.

\item For the remaining $W \in [N_{t+1}]^{n-1}$ that contain $z$, but that are
  not contained in $\set(w)$, we use a single ultrafilter constructed as follows. Recall
  that $H$ is an equivalence relation  on $\G$ with exactly $n$ equivalence
  classes, that satisfies the following for algebras from graphs:
  \[ \label{E:eqclasses}
  \forall x,y : \G (\lnot H(x,y) \rightarrow E(x,y)). \tag{$\dagger$}
  \]
  So by the generalisation technique, $(\dagger)$ is true for $H$ on $\G$.
  Call the equivalence classes $G_1, \dots G_n$.
Recall that they are contained in $\B$.  

  Now each of the $\mu(j)$ for $j \neq i$, if an ultrafilter of $\B$, contains exactly one of the $G_k$.
  There are at most $n-1$ such $j$,
  so there must be at least one $G_\ell$ that is not contained in any 
  $\mu(j)$ that is an ultrafilter. We are given that $M \models\Theta$, so by
  Lemma~\ref{L:ultraind} there is an ultrafilter $\nu$ of $\B$
  containing $G_\ell$ and no independent sets. We define $P_2(W) = \nu$ for
  all the remaining $W \in [N_{t+1}]^{n-1}$.
\end{itemize}
We check  that $\P$ is a coherent patch system.
Let $U = \{u_0, \dots, u_{n-1}\} \in [N_{t+1}]^n$ and write $U_j$ for
$U \setminus \{u_j\}$ for each $j < n$. We need to check that $U$ is
$\P$-coherent:
\begin{itemize}
\item If $z \not\in U$, then $U \subseteq \N_t$ and $U$ is $\P$-coherent because
$\N_t$ is a polyadic, hence cylindric network, so
by Proposition~\ref{prop:DN coh},
  $\partial\N_t$ is coherent.
\item If $U = \set(w)$, then $U$ is $\P$-coherent by Lemma~\ref{L:coherent}.

\item In the case where $z \in U$ and $|U \cap \set(w)| = n - 1$, we can find
  $j,k < n$ such that $z \in U_j = U \cap \set(w)$ and
  $z\in U_k \not\subseteq \set(w)$. Then, by the above, $G_\ell \in \nu=P_2(U_k)$. Moreover,
  by the choice of $\ell$, there is $m \neq \ell$ such that $G_m \in P_2(U_j)$.

  Take any $X_r \in P_2(U_r)$ for each $r < n$. Choose $p_r \in X_r$,
  for each $r<n$, with $p_j \in X_j \cdot G_m$ and $p_k \in X_k \cdot G_\ell$.
  Since $l \neq m$ and therefore $H(p_j,p_k)$ does not hold, we have $E(p_j, p_k)$
  by \eqref{E:eqclasses}. Thus $\{p_0, \dots, p_{n-1}\}$ is not independent.

\item In the remaining cases, $z \in U$ and $|U \cap \set(w)| < n - 1$. Then
  there are distinct $j, k < n$ such that $z \in U_j, U_k \not\subseteq \set(w)$. So by the above, we have
  $P_2(U_j) = P_2(U_k) = \nu$.

  Take any $X_r \in P_2(U_r)$ for each $r < n$. Then $X_j, X_k \in \nu$, and
  thus $X_j \cdot X_k \in \nu$ and is therefore not independent. So there are
  $p_j, p_k \in X_j \cdot X_k$ such that $E(p_j,p_k)$. For the other
  $s \neq j,k$ just choose any $p_s \in X_s$. Then
  $\{p_0, \dots, p_{n-1}\}$ is not independent.
\end{itemize}
This shows that $\P$ is coherent. 

We are nearly ready to define $\N_{t+1}$.
First, define an equivalence relation $\simeq$ on the set
of  one-one tuples in ${}^nN_{t+1}\setminus{}^n\N_t$, by:
$u\simeq u'$ iff there is a permutation $\sigma$ of $n$
such that $u\circ\sigma=u'$.
Choose a representative $u_\varepsilon$ of each $\simeq$-class $\varepsilon$,
ensuring that if $w$ is one-one then it is chosen as a representative.
We now define an ultrafilter $\N_{t+1}(u)$ of $\A$ for each $u\in {}^nN_{t+1}$
as follows.
\begin{enumerate}
\renewcommand{\theenumi}{U\arabic{enumi}}
\renewcommand{\labelenumi}{U\arabic{enumi}.}

\item\label{N case1} If $u\in{}^n\N_t$ we set $\N_{t+1}(u)=\N_t(u)$.

\item\label{N case2}  Define $\N_{t+1}(w)=\mu$.

\item\label{N case3}  If $u\in {}^nN_{t+1}\setminus({}^n\N_t\cup\{w\})$ is the 
representative of its $\simeq$-class
or is not one-one,
we use Lemma~\ref{lem:lots of ufs}
to choose any ultrafilter $\N_{t+1}(u)$
of $\A$ satisfying the properties of that lemma.

\item\label{N case4}  Each remaining tuple $u$ is one-one but is not the representative $u_\varepsilon$ of
its $\simeq$-class $\varepsilon$.
There is a unique $\sigma:n\to n$ such that $u=u_\varepsilon\circ\sigma$, and
we set $\N_{t+1}(u)=\N_{t+1}(u_\varepsilon)^\sigma$.
\end{enumerate}
We check that $\N_{t+1}$ is
a polyadic ultrafilter network.
It is sufficient to check that each $u\in{}^n\N_{t+1}$ satisfies the conditions
of Lemma~\ref{lem:nk from ps}, namely:
\begin{enumerate}
\renewcommand{\theenumi}{L\arabic{enumi}}
\renewcommand{\labelenumi}{L\arabic{enumi}.}
\item\label{lem rpt 1} For $j,k< n$, we have $d_{jk} \in \N_{t+1}(u)$ if and only if $u_j = u_k$.

\item\label{lem rpt 2} $\N_{t+1}(u)(j)=P_2(\{u_k\mid k\in n\setminus\{j\}\})$  for each $j<n$ such that $u$ is \ds j.

\item\label{lem rpt 3} If $\sigma:n\to n$ and $u\circ\sigma:n\to \N_{t+1}$ is one-one, then
$\N_{t+1}(u\circ\sigma)= \N_{t+1}(u)^\sigma$.
\end{enumerate}

If $u\in{}^n\N_t$ this is immediate because $\N_t$ is a polyadic
ultrafilter network and by definition of $\P$.
If $u=w$, \ref{lem rpt 1} holds by choice of $\mu$, 
\ref{lem rpt 2}  by definition of $\P$ and because $\mu\equiv_i\N_t(v)$,
and~\ref{lem rpt 3} by~\ref{N case4}  above, since $w$ is the representative of its $\simeq$-class.
If $u$ is not one-one then \ref{lem rpt 1} and \ref{lem rpt 2} hold by choice of $\N_{t+1}(u)$
in \ref{N case3},
and \ref{lem rpt 3} holds vacuously.
All that remains is the case where $u\notin {}^n\N_t\cup\{w\}$ is one-one.
Let $\varepsilon$ be the $\simeq$-class of $u$, and let
$u=u_\varepsilon\circ\tau$ for some (unique) $\tau:n\to n$.
Trivially if $u=u_\varepsilon$, and by~\ref{N case4}
otherwise,  $\N_{t+1}(u)=\N_{t+1}(u_\varepsilon)^\tau$.
Below, $j,k$ range over $n$.
\begin{itemize}
\item For \ref{lem rpt 1},  $d_{jk}\in \N_{t+1}(u)=\N_{t+1}(u_\varepsilon)^\tau$
iff $s_\tau d_{jk}=d_{\tau(j)\tau(k)}\in \N_{t+1}(u_\varepsilon)$
by Lemma~\ref{lem:sub tech}\ref{lem:sub tech2},
iff $(u_\varepsilon)_{\tau(j)}=(u_\varepsilon)_{\tau(k)}$ 
by choice of $\N_{t+1}(u_\varepsilon)$,
iff $u_j=u_k$ as required.

\item 
We check \ref{lem rpt 2}.
Suppose that $u$ is \ds j.
Plainly, $\tau$ is one-one, so $\tau[n\setminus\{j\}]=n\setminus\{\tau(j)\}$.
Consequently,
\[
\begin{array}{rcll}
&&\N_{t+1}(u)(j)
\\
 &=& \N_{t+1}(u_\varepsilon)^\tau(j) &\mbox{by definition of }\N_{t+1}(u)
\mbox{ in \ref{N case4}}
\\
&=&\N_{t+1}(u_\varepsilon)(\tau(j)) &\mbox{by Lemma~\ref{L:ultraproj}\ref{ultraproj6}}
\\
&=&P_2(\{(u_\varepsilon)_{k}\mid k\in n\setminus\{\tau(j)\}\})&
\mbox{by choice of }\N_{t+1}(u_\varepsilon)
\\
&=&P_2(\{(u_\varepsilon)_{\tau(k)}\mid k\in n\setminus\{j\}\})&
\mbox{as }n\setminus\{\tau(j)\}=\tau[n\setminus\{j\}]
\\
&=&P_2(\{u_k\mid k\in n\setminus\{j\}\})&
\mbox{as }u=u_\varepsilon\circ\tau.
\end{array}
\]

\item For~\ref{lem rpt 3}, suppose that $ \sigma:n\to n$ and $u\circ\sigma$ is one-one.
We check that   $\N_{t+1}(u\circ\sigma)=\N_{t+1}(u)^\sigma$.
Plainly, 
 $u\circ\sigma=u_\varepsilon\circ\tau\circ\sigma\in\varepsilon$
and $\tau\circ\sigma$ is one-one.
Using the definitions and Lemma~\ref{lem:ss uf},
\begin{align*}
\N_{t+1}(u\circ\sigma)=\N_{t+1}(u_\varepsilon)^{\tau\circ\sigma}
= (\N_{t+1}(u_\varepsilon)^\tau)^\sigma =\N_{t+1}(u)^\sigma,
\end{align*}
as required.
\end{itemize}
So by Lemma~\ref{lem:nk from ps}, $\N_{t+1}$ is a polyadic
ultrafilter network.
We also have $\N_{t+1} \supseteq \N_t$, $w \equiv_i v$, and $a \in \mu = \N_{t+1}(w)$. 
The network $\N_{t+1}$ is  $\exists$'s response to $\forall$'s move in round~$t$.
So she is able to respond
to any move made by $\forall$ --- she has a winning strategy.
\end{proof}

\begin{definition}\label{def:thys}
Let us define some $L_{AGS}$-theories.
\begin{enumerate}
\item Fix a universal axiomatisation
$\Pi$ of $\rpea n$ --- such an axiomatisation  exists because
 $\rpea n$ is a variety (Proposition~\ref{prop:can}). 
Also fix any first-order axiomatisation 
$\Delta$ of $\rdf n$.
We regard 
$\Pi$ and $\Delta$ as $\A$-sorted 
$L_{AGS}$-theories in the obvious way.

\item 
Let $\Phi$ be the following $L_{AGS}$-theory, expressing that $\B$ is infinite:
\[
\Phi = \{\phi_m \mid m < \omega\} \quad \text{where }
  \phi_m = \exists B_0, \dots,B_{m-1} : \B \big(\bigwedge_{i < j < m} B_i \neq B_j\big).
\]

\item Also recall from Definition~\ref{def:theta}
that $\Theta=\{\theta_k\mid k<\omega\}$ expresses that $\G$ has infinite chromatic number in the $\B$-sense. The theory $\U$ defining algebra-graph systems
was laid down in Definition~\ref{D:ags}.

\end{enumerate}
\end{definition}
We now obtain the main result of this section.
It generalises the analogous result for algebras from graphs  in \cite{Hirsch09}.

\begin{theorem}\label{thm:dragalong}
 $\U\cup\Phi\cup\Delta\models\Theta$,
and  $\U\cup\Theta\models\Pi$.
\end{theorem}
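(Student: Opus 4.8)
The plan is to read off both entailments directly from the two propositions already proved in this section, Propositions~\ref{P:repimpchrom} and~\ref{P:chromimprep}. All of the substantive work is contained in those propositions; what remains is only the routine translation between satisfaction of the $\A$-sorted theories $\Delta$, $\Pi$ and the representability statements appearing in the hypotheses and conclusions of those propositions.

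For the first entailment, I would take an arbitrary $L_{AGS}$-structure $M=(\A,\G,\B)$ with $M\models\U\cup\Phi\cup\Delta$ and check that it meets the hypotheses of Proposition~\ref{P:repimpchrom}. Satisfaction of $\U$ makes $M$ an algebra-graph system; satisfaction of $\Phi$ forces $\B$ to be infinite; and, since $\Delta$ axiomatises $\rdf n$ and is read as an $\A$-sorted theory over $L_{\df n}\subseteq L_{\pea n}$, satisfaction of $\Delta$ says exactly that the diagonal-free reduct of $\A$ is representable. Proposition~\ref{P:repimpchrom} then yields $M\models\Theta$. For the second entailment, I would take $M=(\A,\G,\B)$ with $M\models\U\cup\Theta$; satisfaction of $\U$ makes it an algebra-graph system, so Proposition~\ref{P:chromimprep} applies and gives that $\A$ is a representable polyadic equality algebra, i.e.\ $\A\in\rpea n$. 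As $\Pi$ was fixed as an axiomatisation of $\rpea n$, this is precisely $M\models\Pi$.

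The only point needing care --- and it is not really an obstacle --- is the reading of the $\A$-sorted theories guaranteed by Definition~\ref{def:thys}: one must confirm that $M\models\Delta$ is equivalent to representability of the diagonal-free reduct of $\A$, and that $M\models\Pi$ is equivalent to representability of $\A$ itself. Both equivalences hold because $\Delta$ and $\Pi$ were chosen as axiomatisations of the respective varieties $\rdf n$ and $\rpea n$, so no genuinely new argument is required beyond invoking the two propositions.
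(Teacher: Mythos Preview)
Your proposal is correct and follows exactly the same approach as the paper, which simply states that the theorem is immediate from Propositions~\ref{P:repimpchrom} and~\ref{P:chromimprep}. You have merely spelled out the routine unpacking of what it means for $M$ to satisfy $\U$, $\Phi$, $\Delta$, and $\Pi$, which the paper leaves implicit.
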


\begin{proof}
Immediate from Propositions~\ref{P:repimpchrom} and~\ref{P:chromimprep}.
\end{proof}

\section{Applications}
\label{S:dirinvsys}

Here we apply Theorem~\ref{thm:dragalong}
to prove our two main theorems.

\begin{definition}
For $L_{\df n}\subseteq L\subseteq L_{\pea n}$,
we write $\rl$ for the class of
$L$-algebras having a representation respecting all the $L$-operations.
\end{definition}

\subsection{Strongly representable atom structures}

\begin{definition}
Let $L_{\df n}\subseteq L\subseteq L_{\pea n}$.
An $L$-atom structure $\S$ is said to be \emph{strongly representable}
if $\S^+\in\rl$.
\end{definition}

The following generalises the main result of \cite{Hirsch09}
to other signatures.
It has already been proved  by Sahed Ahmed
 (draft of untitled monograph, 2010) using the same algebras.
\begin{theorem}\label{thm:srb nonelem}
For any $L_{\df n}\subseteq L\subseteq L_{\pea n}$,
the class of strongly representable $L$-atom structures is non-elementary.
In another common notation, the class
${\sf Str}\,\rl$ of structures for $\rl$ is non-elementary.
\end{theorem}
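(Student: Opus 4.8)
The plan is to reduce strong representability of $\At(\Gamma)$ to the chromatic number of $\Gamma$ and then exploit the non-preservation of chromatic number under ultraproducts. The key is the biconditional: for an \emph{infinite} graph $\Gamma$ and any $L$ with $L_{\df n}\subseteq L\subseteq L_{\pea n}$, the atom structure $\At(\Gamma)$ is strongly representable (that is, $\A(\Gamma)=\At(\Gamma)^+\in\rl$) if and only if $\Gamma$ has infinite chromatic number. Recall that the structure $M(\Gamma)$ of Definition~\ref{D:MGamma} is an algebra-graph system (it models $\U$ by definition) whose boolean sort is $\B=\wp(\Gamma\times n)$, so $M(\Gamma)\models\Theta$ precisely when $\Gamma\times n$ has infinite chromatic number, equivalently when $\Gamma$ does, since $\chi(\Gamma\times n)=n\cdot\chi(\Gamma)$. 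If $\chi(\Gamma)=\infty$ then $M(\Gamma)\models\Theta$, so Proposition~\ref{P:chromimprep} makes $\A(\Gamma)$ representable as a polyadic equality algebra; taking reducts, every $L$-reduct with $L\subseteq L_{\pea n}$ is representable, so $\At(\Gamma)\in{\sf Str}\,\rl$. Conversely, if $\Gamma$ is infinite with $\chi(\Gamma)<\infty$, then $\B$ is infinite and $M(\Gamma)\not\models\Theta$, so the contrapositive of Proposition~\ref{P:repimpchrom} shows that the diagonal-free reduct of $\A(\Gamma)$ is not representable; as representability passes to further reducts, no $L$-reduct with $L\supseteq L_{\df n}$ is representable, so $\At(\Gamma)\notin{\sf Str}\,\rl$. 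The only routine checks are the identity $\chi(\Gamma\times n)=n\cdot\chi(\Gamma)$ and the fact that the $L$-reduct of a polyadic equality set algebra over ${}^nU$ is an $L$-set algebra (for $L=L_{\df n}$, take all factor sets equal to $U$).

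Next I would observe that $\Gamma\mapsto\At(\Gamma)$ is a first-order interpretation: each element of $S(\Gamma)$ is coded by a bounded tuple over $\Gamma\times n$ together with one of the finitely many members of $Eq(n)$, and the relations $D_{ij}$, $\equiv_i$, and $-^\sigma$ are first-order definable from the edge relation. Since first-order interpretations commute with ultraproducts, $\prod_k\At(\Gamma_k)/U\cong\At\!\left(\prod_k\Gamma_k/U\right)$ for any family of graphs $\Gamma_k$ and any ultrafilter $U$.

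Finally I would disprove elementarity by producing strongly representable atom structures whose ultraproduct is not strongly representable; since any elementary class is closed under ultraproducts (by \L{}o\'s's theorem), this suffices. By Erd\H{o}s's theorem on graphs of large girth and chromatic number, for each $k$ there are finite graphs $E_{k,j}$ ($j<\omega$) with girth greater than $k$ and $\chi(E_{k,j})>j$; let $\Gamma_k$ be their disjoint union. Then each $\Gamma_k$ is infinite, has girth greater than $k$, and has $\chi(\Gamma_k)=\infty$, so $\At(\Gamma_k)\in{\sf Str}\,\rl$ by the biconditional. Fix a non-principal ultrafilter $U$ on $\omega$ and put $\Gamma=\prod_k\Gamma_k/U$. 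For each fixed cycle length, cofinitely many $\Gamma_k$ omit cycles of that length, so by \L{}o\'s's theorem $\Gamma$ contains no finite cycle; hence $\Gamma$ is a forest and $\chi(\Gamma)\le 2$. As $\Gamma$ is infinite, the biconditional gives $\At(\Gamma)\notin{\sf Str}\,\rl$, whereas $\At(\Gamma)\cong\prod_k\At(\Gamma_k)/U$ by the interpretation argument. Therefore ${\sf Str}\,\rl$ is not closed under ultraproducts, and so cannot be elementary.

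The main obstacle is the combinatorial core: finding graphs of infinite chromatic number whose ultraproduct has finite chromatic number. The remaining ingredients are essentially bookkeeping on top of Propositions~\ref{P:repimpchrom} and~\ref{P:chromimprep} and the interpretation argument, but the $\Gamma_k$ must be chosen with unboundedly large girth, so that although each contains finite subgraphs of arbitrarily large chromatic number, all finite cycles are eliminated in the ultraproduct, collapsing its chromatic number to at most $2$.
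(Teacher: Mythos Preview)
Your proposal is correct and follows essentially the same approach as the paper: use Propositions~\ref{P:repimpchrom} and~\ref{P:chromimprep} to tie strong representability of $\At(\Gamma)\restr L_+$ to the chromatic number of $\Gamma$, take disjoint unions of Erd\H{o}s graphs to get infinite $\Gamma_k$ with infinite chromatic number and girth tending to infinity, and observe that $\At(-)$ commutes with ultraproducts so that a non-principal ultraproduct yields a cycle-free $\Gamma$ with $\chi(\Gamma)\leq 2$ whose atom structure is not strongly representable. Your treatment is slightly more explicit than the paper's in spelling out the biconditional via $\chi(\Gamma\times n)=n\cdot\chi(\Gamma)$ and in justifying commutation with ultraproducts through a first-order interpretation, but these are elaborations of the same argument rather than a different route.
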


\begin{proof}
A celebrated result of Erd\H os \cite{Erdos59} 
shows that for all $k<\omega$ there is a finite graph $G_k$
with chromatic number and girth (length of the shortest cycle)
both at least $k$.
Let $\Gamma_k$ be the disjoint union of the $G_\ell$ for $k\leq \ell<\omega$:
this time, no edges are added between copies.
Plainly, $\Gamma_k$ has infinite chromatic number, and its girth is at least $k$.
By Proposition~\ref{P:chromimprep} applied to $M(\Gamma_k)$,
$\A(\Gamma_k)\restr L\in\rl$, so that
$(\At\Gamma_k)\restr L_+$ is strongly representable.

Now let $\Gamma$ be a non-principal ultraproduct of the $\Gamma_k$.
Then $\Gamma$ is infinite, and by \L o\'s's theorem it has girth at least $k$ for all finite $k$,
since this property is first-order definable.
Hence, $\Gamma$ has no cycles, so its chromatic number is at most two.
By Proposition~\ref{P:repimpchrom},
the diagonal-free reduct of $\A(\Gamma)$ is not representable,
and hence neither is its $L$-reduct.
So $(\At\Gamma)\restr L_+$ is not strongly representable.

But it is easily seen that the operation $\At(-)$ commutes with ultraproducts,
and it follows that $(\At\Gamma)\restr L_+$ is isomorphic to an ultraproduct
of the $(\At\Gamma_k)\restr L_+$.
This shows that the class of strongly representable $L$-atom structures
is not closed under ultraproducts and so cannot be elementary.
\end{proof}

\subsection{Canonical axiomatisations}
Here, we use direct and inverse systems to build a certain algebra,
and apply the results from the previous sections to show that it can be made to satisfy an arbitrary number of representability axioms,
while its canonical extension only satisfies a bounded number.
It will follow that any first-order
axiomatisation of the representable cylindric algebras (and various other classes)
has infinitely many non-canonical axioms.

Our argument is based on the following result.
It is from \cite[Lemma 4.1]{Hodkinson05}, 
but it can be proved in a rather simpler way by modifying the 
argument of \cite[Theorem 4]{Hell92}. Both proofs use similar random graphs.
First, a definition. 

\begin{definition}
Let $\Gamma,\Delta$ be graphs.
A map $f\colon \Gamma\to\Delta$ is
said to be a \emph{graph p-morphism} if
for each $x\in\Gamma$, $f$ maps the set of neighbours of $x$ in $\Gamma$
surjectively onto the set of neighbours of $f(x)$ in $\Delta$. 
\end{definition}

\begin{theorem}\label{thm:erdos}
Suppose that $2 \leq \ell \leq k < \omega$.
Then there exists an inverse system of  finite graphs 
\[
\begin{array}{ccccc}
  \Gamma_0 &    \xtwoheadleftarrow{f_{10}}& \Gamma_1
  & \xtwoheadleftarrow{f_{21}}& \cdots,
\end{array}
\]
where the $f_{ij}$ are surjective graph p-morphisms, such that
 $\chi(\Gamma_s) = k$
for every  $s<\omega$,  and $\chi(\varprojlim \Gamma_s) = \ell$.
\end{theorem}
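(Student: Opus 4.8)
The plan is to build the system by induction on $s$, exploiting two features of graph p-morphisms. First, a p-morphism is in particular a graph homomorphism, so each projection $\pi_s\colon\varprojlim\Gamma_s\to\Gamma_s$ is a homomorphism and $\chi(\varprojlim\Gamma_s)\le\chi(\Gamma_s)=k$; the real content is therefore to force $\chi(\varprojlim\Gamma_s)$ down to exactly $\ell$. Second, the `back' (neighbourhood-surjectivity) part of a p-morphism is what lets me lift structure from $\Gamma_s$ into $\Gamma_{s+1}$. I would reduce the target $\chi(\varprojlim\Gamma_s)=\ell$ to two separate requirements: a lower bound, obtained by threading a fixed copy of $K_\ell$ coherently through the whole system, and an upper bound, obtained by arranging that the finite graphs become `locally $\ell$-colourable' --- every subgraph of $\Gamma_s$ on fewer than $g_s$ vertices is $\ell$-colourable, with $g_s\to\infty$.

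\textbf{The finite building blocks.} The combinatorial heart is the existence, for each bound $g$, of a finite graph with chromatic number exactly $k$, containing a copy of $K_\ell$, and in which every subgraph on fewer than $g$ vertices is $\ell$-colourable. For $\ell=2$ this is exactly a graph of chromatic number $k$ and girth $>g$, which exists by Erd\H os's theorem \cite{Erdos59}; for general $\ell$ I would obtain such graphs by the probabilistic method, as in \cite{Hell92,Hodkinson05} (the random graph is tuned so that no small subgraph has chromatic number exceeding $\ell$, while the whole graph has large chromatic number). To pin the chromatic number to exactly $k$ rather than merely at least $k$, I would delete vertices, keeping the copy of $K_\ell$ intact, until the chromatic number --- which drops by at most $1$ at each deletion --- first reaches $k$.

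\textbf{Building the system.} Given $\Gamma_s$ with its distinguished $K_\ell$, I would construct $\Gamma_{s+1}$ together with a \emph{surjective} p-morphism $f_{s+1,s}\colon\Gamma_{s+1}\to\Gamma_s$ so that $\chi(\Gamma_{s+1})=k$, the local-$\ell$-colourability bound $g_{s+1}$ exceeds $g_s+1$, and $f_{s+1,s}$ carries a copy of $K_\ell$ in $\Gamma_{s+1}$ isomorphically onto the distinguished $K_\ell$ of $\Gamma_s$. I expect \emph{this} step to be the main obstacle: producing, from a given target $\Gamma_s$, a new random graph that simultaneously surjects onto $\Gamma_s$ by a p-morphism, keeps chromatic number at $k$, and has much higher local $\ell$-colourability. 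This is precisely the delicate amalgamation-of-random-graphs argument of \cite{Hell92,Hodkinson05}; the p-morphism condition (surjectivity on every neighbourhood) is the constraint that makes a naive product or covering construction fail, and it is where the probabilistic input is really needed. The composite maps $f_{ij}=f_{i+1,i}\circ\cdots\circ f_{j,j-1}$ are then surjective p-morphisms, since both properties are preserved under composition.

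\textbf{Computing the chromatic number of the limit.} For the upper bound I would use the de Bruijn--Erd\H os compactness theorem: $\chi(\varprojlim\Gamma_s)\le\ell$ as soon as every finite subgraph is $\ell$-colourable. Given a finite set $F$ of vertices of $\varprojlim\Gamma_s$, distinct points of $F$ eventually disagree in their coordinates (the transition maps push disagreements upward), and, since the $f_{ij}$ preserve edges, the adjacency relation among the projections of $F$ stabilises; hence for all large $s$ the projection $\pi_s$ identifies the subgraph induced on $F$ with an induced subgraph of $\Gamma_s$ on $|F|$ vertices. Choosing $s$ with $g_s>|F|$ makes this subgraph $\ell$-colourable, so $\chi(\varprojlim\Gamma_s)\le\ell$. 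For the lower bound, the coherent chain of copies of $K_\ell$ assembles to a copy of $K_\ell$ inside $\varprojlim\Gamma_s$ (which is nonempty, being the inverse limit of finite nonempty sets along surjections), whence $\chi(\varprojlim\Gamma_s)\ge\ell$. Together with $\chi(\Gamma_s)=k$ from the construction, this yields the theorem.
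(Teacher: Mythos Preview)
The paper does not prove this theorem at all: it simply cites it as \cite[Lemma~4.1]{Hodkinson05}, remarking that it can also be obtained by modifying \cite[Theorem~4]{Hell92}, both via random graphs. Your proposal is a reasonable sketch of exactly those approaches, so in that sense you and the paper agree on the method --- you just supply details the paper outsources.

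One small imprecision worth fixing: in your upper-bound step you claim that for large $s$ the projection $\pi_s$ identifies the finite subgraph $F$ of the inverse limit with an \emph{induced} subgraph of $\Gamma_s$. Injectivity on $F$ is fine for large $s$, but you have no control over non-edges: $\pi_s$ is only a graph homomorphism, so $\pi_s[F]$ may well carry extra edges. Fortunately you do not need an isomorphism. Since $\pi_s\restriction F$ is a homomorphism into a subgraph of $\Gamma_s$ on $|F|<g_s$ vertices, and that subgraph is $\ell$-colourable by construction, pulling the colouring back along $\pi_s$ gives an $\ell$-colouring of $F$. So the conclusion stands; just weaken ``identifies \ldots\ with an induced subgraph'' to ``maps homomorphically into a subgraph on $|F|$ vertices''.
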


Our algebras are constructed from atom structures based on graphs,
so we need to transform graph p-morphisms into 
p-morphisms of atom structures, and then, using duality, to embeddings of algebras.
We will also consider direct and inverse
systems, and their limits.

\begin{definition}\label{def:pmorph}
Let $L\supseteq L_{BA}$ be a functional signature and let
$\S=(S,R_f\mid f\in L\setminus L_{BA})$
and $\S'=(S',R'_f\mid f\in L\setminus L_{BA})$ be $L$-atom structures.
Let $g:S\to S'$. be a function.
We say that $g:\S\to\S'$ is a \emph{p-morphism of atom structures}
if for each $n$-ary $f\in L\setminus L_{BA}$, we have:
\begin{description}
\item [Forth:] $g$ is an $L_+$-homomorphism:
for every $\vec xn,y\in S$, if $R_f(\vec xn,\allowbreak y)$
then $R'_f(g(x_1),\allowbreak\ldots,\allowbreak g(x_n), \allowbreak g(y))$.

\item [Back:] if $y\in S$, $\vec {x'}n\in S'$, and $R'_f(\vec {x'}n,g(y))$,
then there are $\vec xn\in S$ such that
$R_f(\vec xn,y)$ and $g(x_i)=x'_i$ for $i=1,\ldots,n$.

\end{description}
\end{definition}
Our first lemma is straightforward.

\begin{lemma} \label{L:pmorph}
Let $\Gamma,\Delta$ be graphs and $f:\Gamma\to\Delta$ a surjective graph p-morphism.
Let $f^\times:\Gamma\times n\to\Delta\times n$
be given by $f^\times(p,i)=(f(p),i)$ for $(p,i)\in\Gamma\times n$.
Define
  \begin{align*}
    \widehat f : \At(\Gamma) &\to \At(\Delta),\hskip36pt
    (K, \sim) \mapsto (f^\times\circ K, \sim).
  \end{align*}
Then $\widehat f$ is a surjective p-morphism  of atom structures.
\end{lemma}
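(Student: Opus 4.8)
The plan is to verify, in order: that $\widehat f$ maps $S(\Gamma)$ into $S(\Delta)$; the \textbf{Forth} and \textbf{Back} conditions of Definition~\ref{def:pmorph} for each of $R_{d_{ij}}$, $R_{c_i}$, $R_{s_\sigma}$; and finally surjectivity. For well-definedness, note that $f^\times$ is total, so $f^\times\circ K$ has the same domain as $K$, while $\widehat f$ leaves $\sim$ unchanged; hence the conditions on $\dom K$ and $|n/{\sim}|$ transfer directly, and $K(i)=K(j)$ yields $(f^\times\circ K)(i)=(f^\times\circ K)(j)$. The only real point is that $\im(f^\times\circ K)$ is not independent when $\im K$ is not; for this I would first check that $f^\times$ preserves edges, since an edge of $\Gamma\times n$ joins points with either distinct second coordinates, which $f^\times$ preserves, or equal second coordinates lying over an edge of $\Gamma$, which $f$ carries to an edge of $\Delta$ (using that $f$ is a graph p-morphism and $\Delta$ is loop-free, so the images are distinct).

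\textbf{Forth} is then routine. For $R_{d_{ij}}$ it is immediate because $\widehat f$ preserves $\sim$. For $R_{c_i}$, if $(K,\sim)\equiv_i(K',\sim')$ then $\sim_i=\sim'_i$ and $K(i)=K'(i)$, so $(f^\times\circ K)(i)=(f^\times\circ K')(i)$ and hence $\widehat f(K,\sim)\equiv_i\widehat f(K',\sim')$. For $R_{s_\sigma}$ I would record the key identity that $\widehat f$ commutes with $-^\sigma$: the $\sim^\sigma$-component depends only on $\sim$ and $\sigma$, while $K^\sigma(i)=K(j)$ for the appropriate $j$ gives $f^\times\circ K^\sigma=(f^\times\circ K)^\sigma$; Forth follows at once, since $R_{s_\sigma}$ is read off from $-^\sigma$.

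\textbf{Back} for $R_{d_{ij}}$ is trivial, and for $R_{s_\sigma}$ it is easy because $-^\sigma$ is a function: the unique candidate lying over a target related to $\widehat f(K',\sim')$ is $(K',\sim')^\sigma$, and $\widehat f$ commutes with $-^\sigma$. The substance is \textbf{Back} for $R_{c_i}$: given $(K',\sim')\in S(\Gamma)$ and $(L,\approx)\in S(\Delta)$ with $(L,\approx)\equiv_i\widehat f(K',\sim')$, I must build $(K,\approx)\in S(\Gamma)$ with $(K,\approx)\equiv_i(K',\sim')$ and $\widehat f(K,\approx)=(L,\approx)$ (note $\sim$ is forced to equal $\approx$). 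I would split on $|n/{\approx}|$. When $|n/{\approx}|<n-1$ the map $L$ is empty and $K$ nowhere defined works; when $|n/{\approx}|=n-1$ only the single value $L(a)=L(b)$ must be lifted, using surjectivity of $f$ and setting $K(i)=K'(i)$ if $i$ lies in the two-element class; in both cases the constraint $L(i)=(f^\times\circ K')(i)$ forces the $\equiv_i$-data to be consistent.

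The hard case, and the only place the full graph p-morphism is needed, is $|n/{\approx}|=n$: here $L$ is total and $\im L$ is not independent, and after setting $K(i)=K'(i)$ I must lift the remaining values of $L$ so that $\im K$ stays non-independent. I would fix an edge $L(a)-L(b)$ of $\im L$. If its endpoints have distinct second coordinates, any preimages are adjacent; if they share a second coordinate, the edge sits over an edge of $\Delta$, and I must find an edge of $\Gamma$ above it, compatible with the already-fixed value $K(i)=K'(i)$ when $i\in\{a,b\}$. This is exactly the p-morphism property: $f$ maps the neighbours of a chosen preimage onto the neighbours of the relevant $\Delta$-vertex, so a preimage of the prescribed endpoint adjacent to it exists. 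The remaining coordinates are then filled by arbitrary preimages under the surjection $f$. Finally, surjectivity of $\widehat f$ is the same lifting construction with the $\equiv_i$-constraint dropped: every $(L,\approx)\in S(\Delta)$ is the image of a lift of its values, again using neighbour-surjectivity to preserve non-independence in the total case.
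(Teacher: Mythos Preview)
Your proposal is correct and follows essentially the same approach as the paper's proof: both establish that $f^\times$ is itself a surjective graph p-morphism, verify well-definedness via edge-preservation, check the diagonal and substitution conditions via the commutation $\widehat f\circ(-)^\sigma=(-)^\sigma\circ\widehat f$, and handle Back for $R_{c_i}$ by lifting values of $L$ through $f^\times$ while anchoring $K(i)=K'(i)$. Your case split on $|n/{\approx}|$ makes explicit what the paper compresses into the phrase ``it is not hard to do this in such a way that\ldots''; otherwise the arguments coincide.
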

\begin{proof}
Plainly, $f^\times:\Gamma\times n\to\Delta\times n$
is a surjective graph p-morphism.
We need to check the following:
  \begin{enumerate}
   \renewcommand{\theenumi}{(\roman{enumi})}
\renewcommand{\labelenumi}{(\roman{enumi})}

  \item  if $(K, \sim) \in \At(\Gamma)$, then
    $\widehat f(K, \sim) \in \At(\Delta)$;
    
  \item surjectivity;
  \item
  the forth property of the cylindrification relations, i.e. if we
    have $i < n$ and $(K^1, \sim^1) \equiv_i (K^2, \sim^2)$ then
    $\widehat f (K^1, \sim^1) \equiv_i \widehat f (K^2, \sim^2)$;

 \item the back property of the cylindrification relations, i.e. if we
    have $i < n$ and $(J^2, \sim^2)\equiv_i \widehat f (K^1, \sim^1)$, then
    there is $(K^2, \sim^2)\in\At(\Gamma)$ such that $\widehat f(K^2, \sim^2) = (J^2, \sim^2)$
    and $(K^2, \sim^2)\equiv_i(K^1, \sim^1)$;
  \item diagonals are preserved, i.e.
    $(K, \sim) \in D_{ij} \iff \widehat f(K, \sim) \in D_{ij}$;

  \item   substitutions are preserved:
  $\widehat f((K,\osim)^\sigma)=(\widehat f(K,\osim))^\sigma$.

  \end{enumerate}

For (i), suppose  $(K, \sim) \in \At(\Gamma)$ and $|n/{\sim}| = n$.
   Clearly the domain of $K$ is preserved by $\widehat f$. Moreover,
   since $\im K$ is not independent and $f^\times$
   is a graph p-morphism, $\im K'$ is not independent either. The other cases follow
   directly from the definition of $\widehat f$.
  
  To show (ii) let $(K', \sim) \in \At(\Delta)$. If $K'$ is not defined
  anywhere, we let $K$ be undefined everywhere as well. If there are
  $i < j < n$ such that $i \sim j$ and $K'(i) = K'(j)$ is defined, 
  then as $f^\times$ is surjective, there is
  $p \in \Gamma \times n$ such that $f^\times (p) = K'(i)$. Define
  $K(i) = K(j) = p$ and let $K$ be undefined for the remaining values
  in that case. Finally, if $K'$ is defined on all values $i < n$, then
  $\im(K')$ is not independent, so there are $i < j < n$ such that there
  is an edge from $K'(i)$ to $K'(j)$. Since $f^\times$ is surjective, 
  there is  $p_i \in \Gamma \times n$ such that
  $f^\times(p_i) = K'(i)$. 
As  $f^\times$ is a graph p-morphism,  there is
  $p_j \in \Gamma \times n$ such that there is an edge between $p_j$
  and $p_i$ and  $f^\times(p_j) = K'(j)$. For the remaining $s \neq i,j$,
using surjectivity   we take any vertices $p_s \in \Gamma \times n$ such that
  $f^\times (p_s) = K'(s)$. Now define $K(s) = p_s$ for each $s<n$. By construction,
  $(K, \sim) \in \At(\Gamma)$ in all three cases, and
  $\widehat f (K, \sim) = (K', \sim)$.

  For (iii) we have for $(K^1, \sim^1), (K^2, \sim^2) \in \At(\Gamma)$
  and $i < n$ that
  \begin{align*}
    &(K^1, \sim^1) \equiv_i (K^2, \sim^2) \\
    \implies &K^1(i) = K^2(i) \text{ and }
    {\sim_i^1} = {\sim_i^2} \\
    \implies &f^\times(K^1(i)) = f^\times(K^2(i)) \text{ and }
    {\sim_i^1} = {\sim_i^2} \\
    \implies &\widehat f(K^1, \sim^1)
    \equiv_i \widehat f(K^2, \sim^2).
  \end{align*}
  
  For (iv), suppose that
    $(K^1, \sim^1) \in \At(\Gamma)$, $(J^2, \sim^2) \in \At(\Delta)$, $i < n$,
    and $\widehat f(K^1, \sim^1) \equiv_i (J^2, \sim^2)$.
    Then
  \begin{align*}
    f^\times(K^1(i)) = J^2(i) \text{ and }
     {\sim_i^1} = {\sim_i^2}.
   \end{align*}
   Now take $(K^2, \sim^2)$ such that $K^2(i) = K^1(i)$ (which may be undefined), and if $j \neq i$, we
   choose $K^2(j)$ from the $f^\times$-pre-image of $J^2(j)$ if $J^2$ is defined for
   $j$,  and otherwise we leave $K^2(j)$ undefined. 
It is not hard to do this in such a way that if $j\sim^2 k$ then $K^2(j)=K^2(k)$,
   and if $K^2$ is total then $\im K^2$ is not independent
   (here we use that $\im J^2$ is not independent and $f^\times$ is a graph p-morphism).
   Then $(K^2,\sim^2)\in \At(\Gamma)$,
   $\widehat f(K^2, \sim^2) = (J^2, \sim^2)$, and $(K^1, \sim^1) \equiv_i (K^2, \sim^2)$.
   
   To see that diagonals are preserved (v), note that
$
   (K, \sim) \in D_{ij} \iff i \sim j
   \iff \widehat f (K, \sim) \in D_{ij}.
$   
   For (vi), we have 
 \begin{align*}
\widehat f((K,\osim)^\sigma)&=(f^\times\circ K^\sigma,\osim^\sigma),
\\
(\widehat f(K,\osim))^\sigma&=((f^\times\circ K)^\sigma,\osim^\sigma).
\end{align*}
Recall that in general, $K^\sigma(i)$ is defined iff $\osim^\sigma$ is \ds i and is then
$K(j)$, where $j\notin\sigma[n\setminus\{i\}]$.
So $f^\times\circ K^\sigma(i)$ is defined iff $(f^\times\circ K)^\sigma(i)$ is defined,
and in that case, 
\[
f^\times\circ K^\sigma(i)=f^\times(K(j))=(f^\times\circ K)(j)=
(f^\times\circ K)^\sigma(i).
\]
So indeed, $\widehat f((K,\osim)^\sigma)=(\widehat f(K,\osim))^\sigma$.
 \end{proof}

\begin{lemma}\label{lem:dool 2}
Let $g:\At(\Gamma)\to\At(\Delta)$ be a surjective p-morphism.
Then the  map
\[
g^+:\A(\Delta)\to\A(\Gamma),\quad Y\mapsto \{x\in\At(\Gamma)\mid g(x)\in Y\}
\]
is an algebra embedding.
If $f:\A(\Delta)\to\A(\Gamma)$ is an embedding, then the map
\[
f_+:\A(\Gamma)_+\to\A(\Delta)_+,\quad \mu\mapsto \{a\in\A(\Delta)\mid f(a)\in \mu\}
\]
is a surjective p-morphism.
\end{lemma}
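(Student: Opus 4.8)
The plan is to treat both statements as instances of the standard duality between atom structures and their complex algebras (as in \cite{Jonsson51,Blackburn01}), specialised to our signature. Recall that $\A(\Gamma)=\At(\Gamma)^+$ and $\A(\Delta)=\At(\Delta)^+$ are the complex algebras of the respective atom structures (Definition~\ref{def:as}), hence $L_{\pea n}$-BAOs, and that in an ultrafilter structure $\A(\Gamma)_+$ the relations are $R_{d_{ij}}(\nu)\iff d_{ij}\in\nu$, $R_{c_i}(\mu,\nu)\iff c_i[\mu]\subseteq\nu$, and $R_{s_\sigma}(\mu,\nu)\iff s_\sigma[\mu]\subseteq\nu$ (Definition~\ref{def:uf str}).

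For the first map, I would observe that $g^+$ is just the preimage map $Y\mapsto g^{-1}[Y]$, and that this is a boolean homomorphism (immediate, since preimage commutes with $\cup$ and $\setminus$ and fixes $\emptyset$ and the top element). Preservation of each operator is then exactly the content of the \textbf{Forth} and \textbf{Back} clauses of Definition~\ref{def:pmorph}. For instance, for $c_i$ the inclusion $g^{-1}[c_iY]\subseteq c_i(g^{-1}[Y])$ follows from \textbf{Back} and the reverse from \textbf{Forth} (here $c_iY=\{x\mid x\equiv_i y\text{ for some }y\in Y\}$); for the constants $d_{ij}$ this degenerates to $g^{-1}[D^\Delta_{ij}]=D^\Gamma_{ij}$; and for $s_\sigma$ the single identity $g(x^\sigma)=g(x)^\sigma$ (a direct consequence of \textbf{Forth}, since $R_{s_\sigma}(x^\sigma,x)$ always holds) yields $g^{-1}[s_\sigma Y]=s_\sigma(g^{-1}[Y])$ at once. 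Finally, injectivity of $g^+$ is forced by surjectivity of $g$: if $g^{-1}[Y]=g^{-1}[Y']$ then $Y=g[g^{-1}[Y]]=g[g^{-1}[Y']]=Y'$.

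For the second map, first note that $f_+(\mu)=f^{-1}[\mu]$ is an ultrafilter of $\A(\Delta)$, since the preimage of an ultrafilter under a boolean homomorphism is an ultrafilter; so $f_+$ is well defined into $\A(\Delta)_+$. The \textbf{Forth} clauses are short computations using that $f$ commutes with every operator: e.g.\ if $c_i[\mu]\subseteq\nu$ and $a\in f^{-1}[\mu]$, then $f(c_ia)=c_if(a)\in c_i[\mu]\subseteq\nu$, so $c_ia\in f^{-1}[\nu]$, giving $c_i[f_+\mu]\subseteq f_+\nu$; the cases of $s_\sigma$ and $d_{ij}$ are analogous. The substantive part is the \textbf{Back} clause, which I would obtain from the boolean prime ideal theorem. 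For $c_i$: given $\nu\in\A(\Gamma)_+$ and $\lambda\in\A(\Delta)_+$ with $c_i[\lambda]\subseteq f^{-1}[\nu]$, I would set $J=\{a\in\A(\Gamma)\mid c_ia\notin\nu\}$. Since $c_i$ is monotone and additive and $\nu$ is an ultrafilter, $J$ is an ideal, and it is disjoint from the filter generated by $f[\lambda]$: for $b\in\lambda$ we have $c_if(b)=f(c_ib)\in\nu$ by hypothesis, so no element above some $f(b)$ lies in $J$. Extending $f[\lambda]$ to an ultrafilter $\mu$ disjoint from $J$ yields $c_i[\mu]\subseteq\nu$ and $f[\lambda]\subseteq\mu$, whence $f^{-1}[\mu]=\lambda$ (both are ultrafilters of $\A(\Delta)$). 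The identical construction with $J=\{a\mid s_\sigma a\notin\nu\}$ handles $s_\sigma$ (using Lemma~\ref{lem:sub tech}\ref{lem:sub tech1}, that $s_\sigma$ is additive), and $d_{ij}$ is trivial since $f(d_{ij})=d_{ij}$. Surjectivity of $f_+$ is proved the same way: given $\lambda\in\A(\Delta)_+$, the filter generated by $f[\lambda]$ is proper because $f$ is injective (if $f(b_0)\cdots f(b_{k-1})=f(b_0\cdots b_{k-1})=0$ then $b_0\cdots b_{k-1}=0$, contradicting $b_0\cdots b_{k-1}\in\lambda$), so it extends to an ultrafilter $\mu$ with $f^{-1}[\mu]=\lambda$.

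The only genuine obstacle is the \textbf{Back} clause of the second part, where one must produce an ultrafilter $\mu$ that sits above $f[\lambda]$ \emph{and} respects the cylindrification (resp.\ substitution) constraint relative to $\nu$. The key observation that makes this routine is that $\{a\mid c_ia\notin\nu\}$ (resp.\ $\{a\mid s_\sigma a\notin\nu\}$) is an ideal, so the constraint becomes exactly an ideal-avoidance condition to which the boolean prime ideal theorem applies; everything else is bookkeeping in the standard duality.
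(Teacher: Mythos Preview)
Your proof is correct and is precisely the standard duality argument. The paper itself does not give a proof at all: it simply cites \cite[Theorem~5.47]{Blackburn01} and says ``This is standard duality''. What you have written is essentially an unpacking of that citation, specialised to the $L_{\pea n}$ signature, so there is no difference in approach --- you have just supplied the details the paper omits.

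One very minor remark: for the $s_\sigma$ operator you could have argued Back in the second part more cheaply, since $s_\sigma$ on $\A(\Gamma)$ is a boolean \emph{homomorphism} (not merely additive), so $\{a\mid s_\sigma a\in\nu\}$ is already an ultrafilter and no prime-ideal-theorem extension is needed; but the uniform ideal-avoidance argument you gave works as well and keeps the three operator cases parallel.
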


\begin{proof}
This is standard duality: see, e.g.,  \cite[theorem 5.47]{Blackburn01}.
\end{proof}

\begin{proposition}\label{prop:invsys}
Let $\mathfrak G=(\Gamma_n,\nu^m_n:n\leq m<\omega)$ be an inverse system
of finite graphs and surjective p-morphisms.
In the notation of Lemmas~\ref{L:pmorph} and~\ref{lem:dool 2}, define 
\[
\begin{array}{rcl}
\At(\mathfrak G)&=&(\At(\Gamma_n),\widehat{\nu^m_n}:n\leq m<\omega),
\\[2pt]
\c A(\mathfrak G)&=&(\c A(\Gamma_n),\widehat{\nu^m_n}^+:n\leq m<\omega),
\\[2pt]
\c A(\mathfrak G)_+&=&(\c A(\Gamma_n)_+,(\widehat{\nu^m_n}^+)_+:n\leq m<\omega).
\end{array}
\]
Then:
\begin{enumerate}
   \renewcommand{\theenumi}{(\roman{enumi})}
\renewcommand{\labelenumi}{(\roman{enumi})}
\item $\At(\mathfrak G)$ is an inverse system of atom structures and
surjective p-morph\-isms,\label{invsys}

\item $\c A(\mathfrak G)$ is a direct system of BAOs and embeddings,\label{embed}

\item $\A(\mathfrak G)_+$ is an inverse system of atom structures and
surjective p-morph\-isms, and $\A(\mathfrak G)_+\cong\At(\mathfrak G)$,\label{invsys2}

\item $\big(\displaystyle\varinjlim\c A(\mathfrak G)\big)_+\cong\varprojlim\big(\A(\mathfrak G)_+\big)$,\label{rob's bit}

\item $(\displaystyle\varinjlim\c A(\mathfrak G))_+\cong\At(\varprojlim\mathfrak G)$.\label{rob's bit 2}

\end{enumerate}
\end{proposition}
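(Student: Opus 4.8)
The plan is to deduce~(v) from parts~(iii) and~(iv) together with the single extra fact that $\At(-)$ carries the inverse limit of the graph system to the inverse limit of the associated atom structures. By part~(iv) we have $(\varinjlim \c A(\mathfrak G))_+ \cong \varprojlim(\c A(\mathfrak G)_+)$, and by part~(iii) the inverse systems $\c A(\mathfrak G)_+$ and $\At(\mathfrak G)$ are isomorphic, so their inverse limits agree. Hence $(\varinjlim \c A(\mathfrak G))_+ \cong \varprojlim \At(\mathfrak G)$, and it remains only to exhibit an isomorphism of atom structures
\[
\varprojlim \At(\mathfrak G) \cong \At(\varprojlim \mathfrak G).
\]

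To build this isomorphism I would describe both sides concretely. Write $\Gamma = \varprojlim \mathfrak G$, whose vertices are the threads $(p_m)_{m<\omega}$ with $p_m \in \Gamma_m$ and $\nu^m_n(p_m)=p_n$ for all $n\le m$, and with an edge between two threads precisely when there is an edge at every level $m$. A point of $\varprojlim \At(\mathfrak G)$ is a thread $((K_m,\osim))_{m<\omega}$ of atoms; the relation $\osim$ is forced to agree at every level (the maps $\widehat{\nu^m_n}$ fix the second coordinate), and the partial maps satisfy $(\nu^m_n)^\times\circ K_m = K_n$. Writing $K_m(i)=(p^{(m)}_i,\ell_i)$ whenever $i\in\dom K_m$, compatibility forces the copy index $\ell_i$ to be constant in $m$ and makes $(p^{(m)}_i)_m$ a thread in $\Gamma$; setting $K(i)=((p^{(m)}_i)_m,\ell_i)$ then defines a pair $(K,\osim)$, and I claim $(K,\osim)\mapsto((K_m,\osim))_m$ is the required bijection onto $\At(\Gamma)$.

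Most of the verification is bookkeeping. The diagonals $D_{ij}$, the relations $\equiv_i$, and the substitution maps $-^\sigma$ depend only on $\osim$ (common to the thread) and on the threaded values $K(i)$, and each is preserved because the limit relation of an inverse system of relational structures holds exactly when it holds at every level, while Lemma~\ref{L:pmorph}(vi) guarantees that $-^\sigma$ commutes with the bonding maps. The only genuinely non-trivial point --- and the one I expect to be the main obstacle --- is that the non-independence clause in the definition of $\At$ matches up: when $|n/{\osim}|=n$ we must show that $\im K$ is non-independent in $\Gamma\times n$ if and only if $\im K_m$ is non-independent in $\Gamma_m\times n$ for every $m$.

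The forward direction is easy: an edge in $\Gamma\times n$ between $K(i)$ and $K(j)$ arises either from distinct copy indices, in which case such an edge exists at every level, or from a genuine edge of $\Gamma$, which by definition of the limit graph holds at every level; either way $\im K_m$ is non-independent for all $m$. For the converse I would argue by contradiction, using the forth property of the p-morphisms together with the finiteness of the index set $n$. If $\im K$ were independent in $\Gamma\times n$, then every pair $i\ne j$ with $K(i)\ne K(j)$ must have $\ell_i=\ell_j$ (otherwise there is a limit edge) and must fail to be an edge of $\Gamma$, so there is a level $m_{ij}$ with $\lnot E(p^{(m_{ij})}_i,p^{(m_{ij})}_j)$; by the contrapositive of the forth condition this non-edge then persists at every level $\ge m_{ij}$. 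Since there are at most $\binom n2$ such pairs, setting $M=\max_{i,j}m_{ij}$ makes $\im K_M$ independent in $\Gamma_M\times n$, contradicting the hypothesis. Once this finiteness-plus-forth argument is in place, the bijection is readily checked to preserve every relation and its complement in both directions, hence to be an isomorphism of atom structures, and chaining the displayed isomorphisms gives $(\varinjlim \c A(\mathfrak G))_+\cong\At(\varprojlim\mathfrak G)$.
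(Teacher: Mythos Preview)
Your argument is correct and follows exactly the route the paper takes: use~(iv) to pass from $(\varinjlim\c A(\mathfrak G))_+$ to $\varprojlim(\c A(\mathfrak G)_+)$, use~(iii) to identify this with $\varprojlim\At(\mathfrak G)$, and finish by showing $\At(-)$ commutes with the inverse limit. The paper simply asserts this last commutation as ``clear'', whereas you have written out the verification --- including the finiteness-plus-forth pigeonhole argument for the non-independence clause, which is indeed the only point with any content --- so your proof is strictly more detailed than the paper's own.
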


\begin{proof}
Parts \ref{invsys}--\ref{invsys2} are almost immediate from Lemmas~\ref{L:pmorph} and~\ref{lem:dool 2}.
For the last item in~\ref{invsys2},  as each $\At(\Gamma_n)$ is finite, 
 $\c A(\Gamma_n)_+\cong\At(\Gamma_n)$
 (see, e.g., \cite[theorems 9.2, 10.7]{Goldblatt76}),
 and this can be easily extended to show that
  $\c A(\mathfrak G)_+\cong\At(\mathfrak G)$.
  
Part~\ref{rob's bit} is a consequence of important results
of Goldblatt \cite[theorems~10.7, 11.2, 11.6]{Goldblatt76}.
Goldblatt proved these results for 
modal algebras, but they  generalise easily to BAOs.
  
For part~\ref{rob's bit 2},
by part~\ref{rob's bit} and~\ref{invsys2} we have
\[
(\varinjlim\c A(\mathfrak G))_+\cong\varprojlim(\c A(\mathfrak G)_+)\cong\varprojlim\At(\mathfrak G).
\] 
It is clear  that $\At(-)$ commutes with inverse limits, so that
$\varprojlim\At(\mathfrak G)\cong\At(\varprojlim\mathfrak G)$.
\end{proof}

We can now prove the main result of the paper.

\begin{theorem} \label{T:main}
Let $L$ be a signature satisfying $L_{\df n}\subseteq L\subseteq L_{\pea n}$.
Then any first-order axiomatisation of $\rl $ contains infinitely
many non-canonical axioms.
\end{theorem}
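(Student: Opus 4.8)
The plan is to argue by contradiction: suppose some first-order axiomatisation $\Sigma$ of $\rl$ has only finitely many non-canonical sentences, collected in a finite set $\Sigma_0\subseteq\Sigma$, so that every $\sigma\in\Sigma\setminus\Sigma_0$ is canonical. Reading $\Delta$, $\Pi$, $\Sigma$ as $\A$-sorted $L_{AGS}$-theories, I first record two consequences of Theorem~\ref{thm:dragalong}. Since $L\supseteq L_{\df n}$, any $L$-algebra in $\rl$ has a representable diagonal-free reduct, so $\Sigma\models\Delta$; and the $L$-reduct of any representable polyadic equality algebra lies in $\rl$, so $\Pi\models\Sigma$. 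Hence $\U\cup\Theta\models\Pi\models\Sigma$ and $\U\cup\Phi\cup\Sigma\models\U\cup\Phi\cup\Delta\models\Theta$. I would make both finitary by first-order compactness, using that $\theta_k\models\theta_{k'}$ and $\phi_k\models\phi_{k'}$ whenever $k\ge k'$. The first gives, for each $\sigma\in\Sigma$, a number $k_\sigma<\omega$ with $\U\cup\{\theta_{k_\sigma}\}\models\sigma$, so that any algebra-graph system $M$ with $M\models\theta_{k_\sigma}$ satisfies $\sigma$. The second gives, for each $k<\omega$, a finite $\Sigma_k\subseteq\Sigma$ and a $p$ with $\U\cup\{\phi_p\}\cup\Sigma_k\models\theta_k$, so that any algebra-graph system with $\B$ infinite whose algebra satisfies $\Sigma_k$ satisfies $\theta_k$.

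Next I would build the algebra. Put $K_0=\max\{k_\sigma\mid\sigma\in\Sigma_0\}$ (and $K_0=1$ if $\Sigma_0=\emptyset$), fix $\ell=\max(2,K_0+1)$, and then fix $m$ with $m>\ell$ and $m-1\ge\max\{k_\sigma\mid\sigma\in\Sigma_\ell\}$, which is possible because $\Sigma_\ell$ is finite. By Theorem~\ref{thm:erdos} choose an inverse system $\mathfrak G$ of finite graphs, with surjective graph p-morphisms, such that $\chi(\Gamma_s)=m$ for all $s$ and $\chi(\Gamma_\infty)=\ell$, where $\Gamma_\infty=\varprojlim\mathfrak G$; note $\Gamma_\infty$ is infinite, since otherwise the p-morphisms would eventually be isomorphisms, forcing $\chi(\Gamma_\infty)=m$. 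Set $\A=\varinjlim\A(\mathfrak G)$, a BAO as a directed colimit of the BAOs $\A(\Gamma_s)$. By Proposition~\ref{prop:invsys}(v), $\A_+\cong\At(\Gamma_\infty)$, whence $\A^\sigma=(\A_+)^+\cong\A(\Gamma_\infty)$. Because $\Gamma_\infty$ is infinite, $M(\Gamma_\infty)$ is an algebra-graph system with $\wp(\Gamma_\infty\times n)$ infinite, and $\chi(\Gamma_\infty)=\ell$ gives $M(\Gamma_\infty)\not\models\theta_\ell$; so by the second finitary statement it fails some $\tau\in\Sigma_\ell$, i.e. $\A^\sigma\restr L\cong\A(\Gamma_\infty)\restr L\not\models\tau$.

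Granting for the moment that $\A\restr L\models\tau$, the contradiction is quick. Canonical extension commutes with reduction to a subsignature, so $(\A\restr L)^\sigma=\A^\sigma\restr L\not\models\tau$ while $\A\restr L\models\tau$; thus $\tau$ is non-canonical, so $\tau\in\Sigma_0$ and $k_\tau\le K_0<\ell$. But $\U\cup\{\theta_{k_\tau}\}\models\tau$ and $M(\Gamma_\infty)\models\U$, so from $M(\Gamma_\infty)\not\models\tau$ we get $M(\Gamma_\infty)\not\models\theta_{k_\tau}$, i.e. $\chi(\Gamma_\infty)\le k_\tau<\ell$ --- contradicting $\chi(\Gamma_\infty)=\ell$. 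This refutes the hypothesis of finitely many non-canonical axioms and proves the theorem.

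The remaining step, $\A\restr L\models\tau$, is where I expect the real work to lie. Since $k_\tau\le m-1$, by the first finitary statement it is enough to realise $\A$ as the algebra part of an algebra-graph system $M$ with $M\models\theta_{m-1}$ (and $M\models\U$). I would take the $\G$-part to be $\Gamma_\infty\times n$, the $\B$-part to be the boolean algebra of cylinders $\{(\pi_s^\times)^{-1}(B)\mid s<\omega,\ B\subseteq\Gamma_s\times n\}$, where $\pi_s^\times\colon\Gamma_\infty\times n\to\Gamma_s\times n$ is induced by the limit projection, and define $R_i,S_i$ levelwise through these projections so as to match the colimit embeddings $\A(\Gamma_s)\hookrightarrow\A$. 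That $M\models\theta_{m-1}$ is the easy part: a cover of $\Gamma_\infty\times n$ by $m-1$ independent cylinders pulls back at a high enough level $s$ to a cover of $\Gamma_s\times n$ by $m-1$ independent sets, which is impossible since $\Gamma_s\times n$ contains a copy of $\Gamma_s$ and so has chromatic number at least $m$. The genuine obstacle is $M\models\U$: the sentences of $\U$ are $\A$-universal but their matrices may quantify over $\G$ and $\B$, so their truth does not transfer automatically from the $M(\Gamma_s)$. I would exploit that each $\pi_s^\times$ is a surjective graph p-morphism --- letting $\G$-quantifiers be pushed down to level $s$ and lifted back by surjectivity, while every $\B$-element is already a level-$s$ cylinder --- to prove a transfer lemma: for $\bar a\in\A(\Gamma_s)$ and any $\A$-universal matrix $\varphi$, $M(\Gamma_s)\models\varphi(\bar a)$ implies $M\models\varphi(\bar a)$. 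Making this transfer precise is the technical heart of the argument.
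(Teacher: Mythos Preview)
Your overall architecture matches the paper's --- contradiction, compactness to make things finitary, Erd\H os-type inverse system, $\A=\varinjlim\A(\Gamma_s)$, $\A^\sigma\cong\A(\Gamma_\infty)$ --- and your endgame contradiction would go through. But the step you flag as ``the technical heart'' is a genuine gap, and your proposed resolution actually fails.

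You want to realise $\A$ as the algebra sort of an algebra-graph system $M=(\A,\Gamma_\infty\times n,\B')$ with $\B'$ the boolean algebra of cylinders $\{(\pi_s^\times)^{-1}(B)\mid s<\omega,\ B\subseteq\Gamma_s\times n\}$, and then prove a transfer lemma showing $M\models\U$. But $M\not\models\U$. The $\A$-universal sentence
\[
\forall p:\G\;\exists B:\B\;\forall q:\G\;(q\in B\leftrightarrow q=p)
\]
lies in $\U$, since in every $M(\Gamma)$ the $\B$-sort is the full power set $\wp(\Gamma\times n)$. In your $M$, however, the elements of $\B'$ are exactly the clopen subsets of the compact Hausdorff space $\Gamma_\infty\times n$; since this space is infinite (as you yourself argue) it has non-isolated points, whose singletons are not clopen and hence not in $\B'$. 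So your $M$ is not an algebra-graph system, and the transfer lemma cannot hold at the generality you state: $\A$-universal matrices may quantify existentially over $\B$, and your $\B'$ is too small to witness such quantifiers.

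The paper avoids this difficulty by never trying to put $\A$ inside an algebra-graph system. Instead it routes through the fixed \emph{universal} axiomatisation $\Pi$ of $\rpea n$: by compactness there is a finite $\Pi_0\subseteq\Pi$ with $\Pi_0\models T_0$ (the relevant finitely many canonical axioms of $T$) and a $k$ with $\U\cup\{\theta_k\}\models\Pi_0$. Each $M(\Gamma_s)$ is a genuine algebra-graph system with $M(\Gamma_s)\models\theta_k$, so $\A(\Gamma_s)\models\Pi_0$; universal sentences are preserved under direct limits, whence $\A\models\Pi_0$ and hence $\A\models T_0$. Canonicity of $T_0$ gives $\A^\sigma\models T_0$, and since $\A^\sigma\cong\A(\Gamma_\infty)$ really is the algebra of the algebra-graph system $M(\Gamma_\infty)$, the remaining steps (getting $\A^\sigma\models T_{NC}$ from $\theta_\ell$, then forcing $\theta_{\ell+1}$ and a contradiction with $\chi(\Gamma_\infty)=\ell+1$) take place entirely inside $M(\Gamma_\infty)$. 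The detour through $\Pi$ --- universal, hence stable under direct limits --- is exactly what replaces your attempted construction of a new algebra-graph system for $\A$.
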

\begin{proof}
Suppose for a contradiction that $T=T_C\cup T_{NC}$ is a first-order axiomatisation of $\rl$,
where every sentence in $T_C$ is canonical and  $T_{NC}$ is finite.
We regard $T$ equally as an $\A$-sorted $L_{AGS}$-theory in the natural way.
Plainly, $\Pi\models T\models\Delta$.
Also, by Theorem~\ref{thm:dragalong}, 
 $\U\cup\Phi\cup\Delta\models\Theta$
and  $\U\cup\Theta\models\Pi$.
Using this and first-order compactness, and bearing in mind that
$\theta_k\models\theta_l$ whenever $l\leq k<\omega$,
we see that:
\begin{enumerate}
\item there is  $\ell<\omega$ such that
$\U\cup\{\theta_\ell\}\models T_{NC}$,

\item  there is a finite $T_0\subseteq  T_{C}$
such that
$\U\cup\Phi\cup T_0\cup T_{NC}\models\theta_{\ell+1}$,

\item  there is a finite $\Pi_0\subseteq\Pi$
such that $\Pi_0\models T_0$, 

\item there is $k<\omega$
such that $k>\ell$ and $\U\cup\{\theta_k\}\models \Pi_0$. 
\end{enumerate}

Using Theorem~\ref{thm:erdos},
 take finite graphs $\Gamma_0, \Gamma_1, \dots$ such that
$\chi(\Gamma_s) = k+1$ for all $s < \omega$,
\[
\begin{array}{ccccc}
  \Gamma_0 &    \xtwoheadleftarrow{f_{10}} & \Gamma_1
  & \xtwoheadleftarrow{f_{21}} & \cdots,
\end{array}
\]
where the $f_{ij}$ are surjective graph p-morphisms,
and, writing $\Gamma=\varprojlim \Gamma_s$, we have
$\chi(\Gamma) = \ell+1$.
Using Proposition~\ref{prop:invsys}\ref{embed}, we obtain
embeddings:
\[
\A(\Gamma_0) \hookrightarrow   \A(\Gamma_1) \hookrightarrow  \dots.
\]
Define $\A = \varinjlim \A(\Gamma_s)$. Then, because
$\chi(\Gamma_s) = k+1$, we have $M(\Gamma_s)\models\U\cup\{\theta_k\}$,
so
$\A(\Gamma_s) \models \Pi_0$ for each $s<\omega$. As the sentences in $\Pi$
are universal, they are preserved by direct limits, and we therefore
have $\A \models \Pi_0$ and hence $\A\models T_0$. 
As all sentences in $T_0$ are canonical, $\A^\sigma\models T_0$ as well.
Moreover, from Proposition~\ref{prop:invsys}\ref{rob's bit 2} we get
\[
\A_+=\big(\varinjlim \A(\Gamma_s)\big)_+\cong  At(\varprojlim \Gamma_s)=At(\Gamma),
\]
and thus $ \A^\sigma\cong\A(\Gamma)$
and $M(\Gamma) \cong (\A^\sigma,\Gamma,\wp(\Gamma))$.
We chose the graphs so that $\chi(\Gamma) =\ell+1$.
So $M(\Gamma)\models\U\cup\{\theta_\ell\}$ and hence
$\A^\sigma \models T_{NC}$.
As $\Gamma$ is plainly infinite, $\wp(\Gamma)$ is also infinite, and so
   $M(\Gamma)\models   \U\cup\Phi\cup T_0\cup T_{NC}$
   and hence $M(\Gamma)\models \theta_{\ell+1}$.
   So $\chi(\Gamma) > \ell+1$, a contradiction.
\end{proof}

\begin{corollary}\label{cor:nfa etc}
Any first-order axiomatisation (for example, any equational axiomatisation) of any of the following
classes has infinitely many non-canonical sentences:
\begin{enumerate}
\item the class $\rdf n$ of representable $n$-dimensional 
diagonal-free cylindric algebras,

\item the class $\rca n$ of representable $n$-dimensional cylindric algebras, 

\item the class $\rpa n$ of representable $n$-dimensional 
polyadic algebras,

\item the class $\rpea n$ of representable $n$-dimensional 
polyadic equality algebras.
\end{enumerate}
Hence, none of the classes is finitely axiomatisable, nor does it have
an axiomatisation where only finitely many axioms are not Sahlqvist equations.
\end{corollary}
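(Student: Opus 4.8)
The plan is to derive the entire corollary directly from Theorem~\ref{T:main}. First I would observe that each of the four signatures $L_{\df n}$, $L_{\ca n}$, $L_{\pa n}$, and $L_{\pea n}$ satisfies the hypothesis $L_{\df n}\subseteq L\subseteq L_{\pea n}$ of that theorem, and that for these four choices of $L$ the class $\rl$ is, by the definitions of $\rl$ and of representability, exactly $\rdf n$, $\rca n$, $\rpa n$, and $\rpea n$ respectively. Consequently items (1)--(4) follow by applying Theorem~\ref{T:main} once to each of these signatures.

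For the concluding sentence I would argue by contradiction. If one of the four classes were finitely axiomatisable, it would admit an axiomatisation consisting of only finitely many sentences; such a finite set can contain only finitely many non-canonical sentences, directly contradicting items (1)--(4). Hence none of the four classes is finitely axiomatisable.

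For the Sahlqvist claim, I would invoke the fact recalled in the introduction (see \cite{Blackburn01}) that every Sahlqvist equation is canonical. Suppose, towards a contradiction, that one of the classes possessed a first-order axiomatisation in which all but finitely many of the axioms were Sahlqvist equations. Then all but finitely many of those axioms would be canonical, so the whole axiomatisation would contain at most finitely many non-canonical sentences --- again contradicting items (1)--(4). This rules out such an axiomatisation for each of the four classes.

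The corollary is essentially immediate once Theorem~\ref{T:main} is available, so I do not anticipate a genuine obstacle. The only point meriting a moment's care is confirming that the four named varieties are literally instances of $\rl$ for admissible signatures $L$ --- but this is transparent from the definitions --- together with the standing fact that Sahlqvist equations are canonical, which supplies the bridge between the Sahlqvist and the canonicity formulations in the final sentence.
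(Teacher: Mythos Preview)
Your proposal is correct and follows exactly the paper's approach: the paper's own proof reads simply ``Immediate from Theorem~\ref{T:main} and because Sahlqvist equations are canonical.'' Your write-up merely unpacks this one-line justification in more detail.
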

\begin{proof}
Immediate from  Theorem~\ref{T:main} and because Sahlqvist equations are canonical.
\end{proof}

\section{Conclusion}\label{sec:end}
We have proved that every variety of representable
algebras of relations whose signature lies between that of $\rdf n$ and $\rpea n$
(for finite $n\geq3$) is \emph{barely canonical,}
in that (although canonical) it cannot be axiomatised by first-order sentences
only finitely many of which are not themselves canonical.
As far as we know, it is an open question whether various
other varieties of algebras of relations are also barely canonical, including
infinite-dimensional diagonal-free, cylindric,
polyadic (equality) and quasi-polyadic (equality) algebras,
classes of relativised set algebras such as ${\sf Crs}_n$, ${\sf D}_n$, ${\sf G}_n$
($n\geq3$),
and various classes of neat reducts,
such as ${\bf S}\mathfrak{Nr}_n\ca m$ for $3\leq n<m<\omega$, 
and ${\bf S}\mathfrak{Ra}\ca n$ for $5\leq n<\omega$.
Some of these (such as ${\sf G}_\omega$) are not even known to be varieties.
A wider question is to find a more general method for proving bare canonicity.

\bibliographystyle{amsplain}

\bibliography{bib}

\end{document}